\newcommand\dela[1]{}
\newcommand{\embed}{\hookrightarrow }
\newcommand{\lb}{\langle}
\newcommand{\rb}{\rangle}
\numberwithin{equation}{section}
\newtheorem{theorem}{Theorem}[section]
\newtheorem{lemma}[theorem]{Lemma}
\newtheorem{cor}[theorem]{Corollary}
\newtheorem{definition}[theorem]{Definition}
\newcommand{\nat}{\mathbb{N}}
\newcommand{\rzecz}{\mathbb{R}}
\newcommand{\eps}{\varepsilon }
\newcommand{\rd}{{\rzecz }^{d}}
\newcommand{\modulus}{\mbox{\rm m}}
\newcommand{\qvar}[1]{\bigl< \! \bigl< #1  \bigr> \! \bigr> }
\newcommand{\tr}{\mbox{\rm Tr}}
\newcommand{\diver}{\mbox{\rm div}\,}
\newcommand{\supp}{\mbox{\rm supp}\, }
\newcommand{\acal}{\mathcal{A}}
\newcommand{\ccal}{\mathcal{C}}
\newcommand{\fcal}{\mathcal{F}}
\newcommand{\hcal}{\mathcal{H}}
\newcommand{\kcal}{\mathcal{K}}
\newcommand{\lcal}{\mathcal{L}}
\newcommand{\ocal}{\mathcal{O}}
\newcommand{\scal}{\mathcal{S}}
\newcommand{\tcal}{\mathcal{T}}
\newcommand{\vcal}{\mathcal{V}}
\newcommand{\xcal}{\mathcal{X}}
\newcommand{\zcal}{\mathcal{Z}}
\newcommand{\fmath}{\mathbb{F}}
\newcommand{\smath}{\mathbb{S}}
\newcommand{\nlim}{\lim_{n \to \infty }}
\newcommand{\kinf}{k \to \infty }
\newcommand{\ninf}{n \to \infty }
\newcommand{\ball}{\mathbb{B}}
\newcommand{\ind}[1]{{1\! \!  1 }_{#1}  }
\newcommand{\norm}[3]{\|  #1 {\| }_{#2}^{#3}}
\newcommand{\ilsk}[3]{{\bigl( #1 , #2 \bigr)}_{#3}}
\newcommand{\Ilsk}[3]{\Bigl( #1 , #2 {\Bigr)}_{#3}}
\newcommand{\dual}[3]{{\lb #1 , #2 \rb}_{#3}}
\newcommand{\Dual}[3]{{\Bigl< #1  , #2 \Bigr>}_{#3}}
\newcommand{\dirilsk}[3]{{\bigl( \! \bigl( #1 , #2 \bigr) \! \bigr)}_{#3}}
\newcommand{\ddual}[4]{{}_{#1}\lb #2 ,#3 {\rb }_{#4}}
\newcommand{\p}{\mathbb{P}}
\newcommand{\e}{\mathbb{E}}
\newcommand{\fn}{{f}_{n}}
\newcommand{\ft }{{\fcal }_{t}}
\newcommand{\tOmega}{\tilde{\Omega}}
\newcommand{\tfcal}{\tilde{\fcal}}
\newcommand{\tp}{\tilde{\p}}
\newcommand{\te}{\tilde{\e}}
\newcommand{\Xn}{{X}_{n}}
\newcommand{\tMn}{{\tilde{M}}_{n}}
\newcommand{\Pn}{{P}_{n}}
\newcommand{\un}{{u}_{n}}
\newcommand{\Jn}[1]{{J}^{n}_{#1}}
\newcommand{\unk}{{u}_{{n}_{k}}}
\newcommand{\taun}{{\tau}_{n}}
\newcommand{\lhs}{{\mathcal{T}_2}}
\newcommand{\Bn}{{B}_{n}}
\newcommand{\tM}{\tilde{M}}
\newcommand{\ttfcal}{\tilde{\tilde{\fcal}}}
\newcommand{\ttOmega}{\tilde{\tilde{\Omega }}}
\newcommand{\ttp}{\tilde{\tilde{\p }}}
\newcommand{\ttW}{\tilde{\tilde{W }}}
\newcommand{\hi}{{h}^{(i)}}
\newcommand{\bi}{{b}^{(i)}}
\newcommand{\bij}{{b}^{(i)}_{j}}
\newcommand{\bik}{{b}^{(i)}_{k}}
\newcommand{\ci}{{c}^{(i)}}
\newcommand{\tun}{{\tilde{u}}_{n}}
\newcommand{\tunk}{{\tilde{u}}_{{n}_{k}}}
\newcommand{\tu}{\tilde{u}}
\newcommand{\ttu}{\tilde{\tilde{u}}}
\newcommand{\vn}{{v}_{n}}
\journal{Journal of Differential Equations}
\begin{document}

\begin{frontmatter}
\title{Existence of a martingale solution of the stochastic Navier-Stokes equations
in  unbounded 2D and 3D domains.}

\author[ZB]{Zdzis\l aw Brze\'{z}niak}
\ead{zdzislaw.brzezniak@york.ac.uk}
\author[EM]{El\.zbieta Motyl}
\ead{emotyl@math.uni.lodz.pl}

\address[ZB]{Department of Mathematics, University of York, Heslington, York,
YO105DD, United Kingdom}
\address[EM]{Department of Mathematics and Computer Sciences, University of \L\'{o}d\'{z}, ul. Banacha 2,
91-238 \L \'{o}d\'{z}, Poland}

\begin{abstract}
Stochastic Navier-Stokes equations in 2D and 3D possibly unbounded domains driven by a multiplicative Gaussian noise are considered. The noise term depends on the unknown velocity and its spatial derivatives.
The existence of a martingale solution is proved.
The construction of the solution is based on the classical Faedo-Galerkin approximation,
 the compactness method and the Jakubowski version of the Skorokhod Theorem for nonmetric spaces.
 Moreover, some compactness and tightness criteria in nonmetric spaces are proved. Compactness results are based on a certain generalization of the classical Dubinsky Theorem.
\end{abstract}

\begin{keyword}
Stochastic Navier-Stokes equations \sep martingale solution \sep compactness method

MSC: primary 35Q30 \sep 60H15 \sep secondary 76M35
\end{keyword}

\end{frontmatter}

\section{Introduction.}

\bigskip  \noindent
Let $\ocal \subset \rd $ be an open connected possibly unbounded subset with smooth boundary $\partial \ocal $, where $d=2,3$.
We consider the stochastic Navier-Stokes equations
$$
   \frac{\partial u }{\partial t } + (u \cdot \nabla ) u
  -  \nu \Delta u  + \nabla p =  f(t) +  G(t,u) \, d W(t) , \qquad t \in [0,T] ,
$$
in $\ocal $, with the incompressibility condition
$$
  \diver u =0
$$
and with the homogeneous boundary condition ${u}_{| \partial \ocal }=0$.
In this problem $u=u(t,x)$

\noindent
$=({u}_{1}(t,x), ...,{u}_{d}(t,x))$ and $p=p(t,x)$
represent the velocity and the pressure of the fluid. Furthermore, $f$ stands for the deterministic
external forces and $G(t,u)\, d W(t)$, where $W$ is a cylindrical Wiener process, stands for the random forces.

\bigskip  \noindent
The above problem can be written in an abstract form as the following initial value problem
$$
\begin{cases}
& du(t)+\acal u(t) \, dt+B\bigl( u(t)\bigr)\, dt = f(t) \, dt+G\bigl( u(t)\bigr) \, dW(t),
 \qquad t \in [0,T] , \\
& u(0) = {u}_{0} .
\end{cases}
$$
Here $\acal $ and $B$ are appropriate maps corresponding to the Laplacian and the nonlinear term, respectively in the Navier-Stokes equations, see Section \ref{S:Functional_setting}.
We impose rather general assumptions (G), (\ref{E:G*}) and  (\ref{E:G**}) on the noise $G(t,u ) d W(t)$ formulated in (A.3) in Section \ref{S:Navier_Stokes}.
These assumptions cover the following special case
$$
 G(t, u ) d W(t) := \sum_{i=1}^{\infty } \bigl[
 \bigl( {b}^{(i)}(x) \cdot \nabla  \bigr) u (t,x) + {c}^{(i)} (x) u (t,x) \bigr]
d {\beta }^{(i)}(t) ,
$$
where  $\{ {\beta }^{(i)} { \} }_{i\in \nat }$ are independent real standard Brownian motions,
see Section \ref{S:Example}.

\bigskip  \noindent
We prove the existence of a martingale solution.
The construction of a solution is based on the classical Faedo-Galerkin approximation, i.e.
$$
\begin{cases}
  & d \un (t) =  - \bigl[ \Pn \acal \un (t)  + \Bn  \bigl(\un (t)\bigr)  - \Pn f(t) \bigr] \, dt + \Pn G\bigl( \un (t)\bigr) \, dW(t),   \quad t \in [0,T] , \\
  &  \un (0) = \Pn {u }_{0}
\end{cases}
$$
given in  Section \ref{S:Existence}.
The crucial point is to prove suitable uniform \it a priori \rm estimates on $\un $. Analogously to \cite{Flandoli+Gatarek_1995}, we prove that the following estimates hold
$$
  \sup_{n \ge 1 }  \e \bigl[ \int_{0}^{T} \norm{ \un (s)}{V}{2} \, ds \bigr] < \infty .
$$
and
$$
 \sup_{n \ge 1 } \e \bigl( \sup_{0 \le s \le T } |\un (s){|}_{H}^{p} \bigr) < \infty  .
$$
for $p\in  \bigl[ 2, 2+ \frac{\eta }{2-\eta } \bigr)$, where  $\eta \in(0,2)$ is  given parameter, see Section \ref{S:Navier_Stokes}.
Here, $V$ and $H $ denote the closures  in the Sobolev space ${H}^{1}(\ocal , \rd )$ and ${L}^{2}(\ocal , \rd )$, respectively of the divergence-free vector fields of class ${\ccal }^{\infty }$ with compact supports contained in $\ocal $. The solutions $\un $ to the Galerkin scheme generate a sequence of laws $\{ \lcal (\un), n \in \nat \} $ on appropriate functional spaces. To prove that this sequence of probability measures is weakly compact we need appropriate tightness criteria.

\bigskip  \noindent
In Section \ref{S:Compactness} we prove certain deterministic compactness results, see Lemmas
\ref{L:Dubinsky_ext} and \ref{L:comp}.
If $\ocal $ is unbounded, then the embedding $V \hookrightarrow H $ is not compact.
However, using Lemma  2.5 from \cite{Holly_Wiciak_1995}
(see Lemma  C.1 in Appendix C), we can find a separable Hilbert space $U$
such that
$$
    U \subset V \subset H
$$
the embedding $\iota: U\hookrightarrow V$ being dense and compact.
Then we have
$$
  U
 \underset{\iota }{\hookrightarrow } H \cong H^{\prime }
  \underset{{\iota }^{\prime }}{\hookrightarrow } U^{\prime },
$$
where $H^{\prime }$ and $U^{\prime }$ are the dual spaces of $H$ and $U$, respectively, $H^{\prime }$ being identified with $H$
and ${\iota }^{\prime }$ is the dual operator to the embedding $\iota $.
Moreover, ${\iota }^{\prime }$ is compact as well.
Modifying the proof of the classical Dubinsky Theorem, \cite[Theorem IV.4.1]{Vishik_Fursikov_88}, we obtain
a certain deterministic compactness criterion,
see Lemma \ref{L:Dubinsky_ext}. Namely, we will show that a set $\kcal $ is relatively compact in the intersection
$$
  \tilde{\zcal }:= \ccal ([0,T]; U^{\prime }) \cap {L}_{w}^{2}(0,T;V)  \cap {L}^{2}(0,T;{H}_{loc})
$$
if the following two conditions hold:
\begin{itemize}
\item $ \sup_{u\in \kcal } \int_{0}^{T} \norm{u(s)}{V}{2} \, ds < \infty  $,
    i.e. $\kcal $ is bounded in ${L}^{2}(0,T;V)$,
\item $ \lim{}_{\delta \to 0 } \sup_{u\in \kcal }
  \sup_{\overset{s,t \in[0,T]}{|t-s|\le \delta }} {|u(t)-u(s)|}_{U^{\prime }} =0 $.
\end{itemize}
Here $\ccal ([0,T];U^{\prime })$ denotes the space of $U^{\prime }$-valued continuous functions, ${L}_{w}^{2}(0,T;V) $ is the spaces ${L}^{2}(0,T;V) $ equipped with the weak topology and  ${L}^{2}(0,T;{H}_{loc})$ is a Fr\'{e}chet space defined in (\ref{E:seminorms}) in Section \ref{S:Compactness}.
Let us notice that the second condition implies the equicontinuity of the family $\kcal$ of $U^{\prime }$-valued functions.
Thus the above two conditions are the same as in the Dubinsky Theorem. However, since the embedding $V \hookrightarrow H$ is not compact, then in comparison to the Dubinsky Theorem we have the space ${L}^{2}(0,T;{H}_{loc})$ instead of ${L}^{2}(0,T;H)$.

\bigskip \noindent
Next, using this version of the Dubinsky Theorem,  we will prove another deterministic compactness criterion, see Lemma \ref{L:comp}. Namely, we will show that a set $\kcal $ is relatively compact in the intersection
$$
  \zcal : = \ccal ([0,T]; U^{\prime }) \cap {L}_{w}^{2}(0,T;V)  \cap {L}^{2}(0,T;{H}_{loc})
  \cap \ccal ([0,T];{H}_{w})
$$
if  the following three conditions hold:
\begin{description}
\item[(a)] $\sup_{u\in \kcal } \sup_{s \in[0,T]} {|u(s)|}_{H}^{2} < \infty  $,
\item[(b)] $ \sup_{u\in \kcal } \int_{0}^{T} \norm{u(s)}{}{2} \, ds < \infty  $,
\item[(c)] $ \lim{}_{\delta \to 0 } \sup_{u\in \kcal }
  \sup_{\overset{s,t \in[0,T]}{|t-s|\le \delta }} {|u(t)-u(s)|}_{U^{\prime }} =0 $.
\end{description}

\bigskip  \noindent
These results were inspired by Lemma 2.7 due to  Mikulevicius and  Rozovskii \cite{Mikulevicius+Rozovskii_2005}, where the case of $\ocal := \rd $, $d \ge 2$ is considered.
In \cite{Mikulevicius+Rozovskii_2005} the space ${L}^{2}(\rd )$ is compactly embedded in the Fr\'{e}chet space ${H}^{-{k}_{0}}_{loc}(\rd )$ for sufficiently large ${k}_{0}$. Then, the authors prove the deterministic compactness criterion in the intersection
$$
  \ccal ([0,T];{H}^{-{k}_{0}}_{loc}(\rd )) \cap \ccal ([0,T]; {L}^{2}_{w}(\rd ))
  \cap {L}^{2}_{w}(0,T; {H}^{1}(\rd )) \cap {L}^{2}(0,T; {L}^{2}_{loc}(\rd )).
$$
The main difference with the approach of  Mikulevicius and  Rozovskii, is that instead of the
Fr\'{e}chet space ${H}^{-{k}_{0}}_{loc}(\rd )$, we consider the space $U^{\prime }$ dual to the Hilbert space $U$ constructed in a special way by  Holly and  Wiciak, see \cite[Lemma  2.5]{Holly_Wiciak_1995}.
This allows us to prove the above mentioned modification of the Dubinsky Theorem.
The space $U$ will be also of crucial importance in further construction of a martingale solution.

\bigskip  \noindent
Using Lemma \ref{L:comp} and the Aldous condition in the form given by  M\'{e}tivier \cite{Metivier_88}, we obtain a new tightness criterion for the laws on the space $\zcal $, see Corollary
\ref{C:tigthness_criterion}. Next, we prove that the set of laws $\{ \lcal (\un), n \in \nat \} $ is tight on $\zcal $. The next step in our construction of a martingale solution differs from the approach of  Mikulevicius and  Rozovskii.
We apply the method used by  Da Prato and  Zabczyk in \cite[Chapter 8]{DaPrato+Zabczyk_1992}. This method is based on the Skorokhod Theorem and the Martingale Representation Theorem.
However, we will apply the Jakubowski's version of the Skorokhod Theorem for nonmetric spaces in the form given by Brze\'{z}niak and Ondrej\'{a}t \cite{Brz+Ondrejat_2011},
\cite{Jakubowski_1998}. In \cite[Chapter 8]{DaPrato+Zabczyk_1992} the authors impose the linear growth conditions on the nonlinear term and assume the compactness of the appropriate semigroup.
The assumptions considered in \cite[Chapter 8]{DaPrato+Zabczyk_1992} do not cover the stochastic Navier-Stokes equations, however, we can use the ideas introduced there.
This method seems to us more direct.
In the case of 2D domains we prove moreover the existence and uniqueness of strong solutions.

\bigskip  \noindent
Stochastic Partial Differential Equations (SPDEs)  can  be  viewed  as  an intersection    of  the  infinite-dimensional Stochastic
Analysis and Partial Differential Equations. The  theory of SPDEs  began in the early  $70$'s with
works of Bensoussan and Temam  \cite{Bens+Temam_1973}, Dawson and Salehi \cite{Daws+Salehi_1975}
and many others. Due to contributions of several authors such as Pardoux \cite{Pardoux79}
  Krylov and Rozovskii \cite{Kryl+Ros_79}, Da Prato and Zabczyk \cite{DaPrato+Zabczyk_1992} many aspects  of this new theory  are now well developed and understood. The study  of stochastic NSEs   initiated in \cite{Bens+Temam_1973}  was continued by many, for instance  Brze{\'z}niak et al. \cite{BCF3,BCF4}, Flandoli and G\c{a}tarek \cite{Flandoli+Gatarek_1995}, Capi\'{n}ski and Peszat \cite{Capinski_Peszat_1997}, Hairer and Mattingly \cite{Hairer+Matt_2006} and  Mikulevicius and Rozovskii \cite{Mikulevicius+Rozovskii_2005}. In the last paper the authors study the existence of a martingale solution of the stochastic Navier-Stokes equations for turbulent flows in
$\rd $, ($d \ge 2 $) corresponding to the Kraichnan model of turbulence.

\bigskip  \noindent
Stochastic Navier-Stokes equations in unbounded 2D and 3D domains with the noise independent on $\nabla u$
were considered by Capi\'{n}ski and Peszat \cite{Capinski_Peszat_2001} and Brze\'{z}niak and Peszat \cite{Brz+Peszat_2001}.
The solutions are constructed in weighted spaces. Invariant measures for stochastic Navier-Stokes equations with an additive noise in some unbounded 2D domains are investigated by Brze\'{z}niak and Li \cite{Brz+Li_2006}. Our results generalize the corresponding existence results from \cite{Capinski_Peszat_2001}, \cite{Brz+Li_2006} \cite{Flandoli+Gatarek_1995},  and \cite{Mikulevicius+Rozovskii_2005}.
What concerns modelling of noise, we have tried to be as general as possible and include the roughest noise possible. One should bear in mind that  the rougher the noise the closer the model is to
reality. Moreover,  Landau and Lifshitz in
 their fundamental 1959 work \cite[Chapter 17]{LL_1987}  proposed
to study NSEs under additional stochastic  small fluctuations.
Consequently these authors considered the classical balance laws for mass,
energy and momentum forced
by a random noise, to describe the fluctuations, in particular local
stresses and temperature,
which are not related to the gradient of the corresponding quantities.
In \cite[Chapter 12]{LL_1968} the same authors  found
correlations for the random forcing by following the general theory of
fluctuations. One of the requirements imposed on the noise is that
it is either spatially uncorrelated or correlated as little as
possible.

\bigskip  \noindent
The present paper is organized as follows. In Section \ref{S:Functional_setting} we recall basic definitions and introduce some auxiliary operators.
In Section \ref{S:Compactness} we are concerned with the compactness results. In Section
\ref{S:Navier_Stokes}, we formulate the Navier-Stokes problem as an abstract stochastic evolution equation. The main theorem about existence and construction of the martingale solution is in Section \ref{S:Existence}.
Some auxiliary results connected with the proof are given in Appendices A and B. In Section \ref{S:Example}, we consider some example of the noise. For the convenience of the reader, in Appendix C we recall Lemma  2.5 from \cite{Holly_Wiciak_1995} with the proof.
Section \ref{S:2D_NS} and Appendix D are devoted to  2D Navier-Stokes equations.

\subsection*{Acknowledgments} The authors would like to thank an anonymous referee for most useful queries and comments which led to the improvement of the paper.

\bigskip
\section{Functional setting} \label{S:Functional_setting}

\subsection{Notations.}
\noindent
Let $(X,{|\cdot |}_{X}), (Y,{|\cdot |}_{Y})$ be two real normed spaces. The symbol $\lcal (X,Y)$ stands for the space of all bounded linear operators from $X$ to $Y$. If $Y=\rzecz $, then $X^\prime :=\lcal (X,\rzecz )$ is called the dual space of $X$. The symbol
$\ddual{{X}^{\prime }}{\cdot }{\cdot }{X}$
denotes the standard duality pairing. If no confusion seems likely we omit the subscripts $X^\prime ,X$ and write $\dual{\cdot }{\cdot }{}$.
If both spaces $X$ and $Y$ are separable Hilbert, then by $\lhs(Y,X)$ we will denote the Hilbert space of all Hilbert-Schmidt operators from $Y$ to $X$ endowed with the standard norm.

\bigskip  \noindent
Assume that $X,Y$ are Banach spaces. Let $A$ be a densely defined linear operator from $X$ to $Y$ and let $D(A)$ denote the domain of $A$. Let
$$
   D(A^{\prime }):= \{ {y}^{\prime }\in Y^\prime: \, \, \, \mbox{the linear functional } {y}^{\prime } \circ A : D(A) \to \rzecz
   \mbox{ is bounded.} \}
$$
Note that $D(A^\prime)=Y^\prime$ if $A$ is bounded. Let ${y}^\prime \in D(A^\prime)$. Since $A$ is densely defined, the functional ${y}^\prime \circ A $ can be uniquely extended to the linear bounded functional $\overline{{y}^\prime \circ A } \in X^\prime $.
The operator $A^\prime$ defined by
$$
   A^\prime   {y}^\prime := \overline{{y}^\prime \circ A } , \qquad {y}^\prime \in D(A^\prime ),
$$
is called the dual operator of $A$.

\bigskip  \noindent
Assume that $X,Y$ are Hilbert spaces with scalar products $\ilsk{\cdot }{\cdot }{X}$ and $\ilsk{\cdot }{\cdot }{Y}$, respectively.
Let $A:X \supset D(A) \to Y$ be a densely defined linear operator. By ${A}^{\ast }$ we denote the adjoint operator of $A$. In particular, $D({A}^{\ast }) \subset Y $, ${A}^{\ast }: D({A}^{\ast }) \to X $ and
$$
   \ilsk{Ax}{y}{Y} = \ilsk{x}{{A}^\ast y}{X} , \qquad x \in D(A), \quad y \in D({A}^{\ast }) .
$$
Note that $D({A}^\ast)=Y$ if $A$ is bounded.

\bigskip
\subsection{Basic definitions}

\noindent
Let $\ocal \subset \rd $ be an open subset with smooth boundary $\partial \ocal $, $d=2,3$.
Let $p\in (1,\infty )$ and let ${L}^{p}(\ocal , \rd )$ denote the Banach space of Lebesgue measurable $\rd $-valued $p$-th power integrable functions on the set $\ocal $. The norm in ${L}^{p}(\ocal , \rd )$ is given by
$$
      \norm{u}{{L}^{p}}{} := \biggl( \int_{\ocal } |u(x){|}^{p} \, dx {\biggr) }^{\frac{1}{p}} , \qquad u \in {L}^{p}(\ocal , \rd ).
$$
By ${L}^{\infty } (\ocal , \rd )$ we denote the Banach space of Lebesgue measurable essentially bounded $\rd $-valued functions defined on $\ocal $. The norm is given by
$$
     \norm{u}{{L}^{\infty }}{}:= \mbox{\rm esssup} \,\{  |u(x)| , \, \, x \in \ocal  \}  , \qquad u \in {L}^{\infty } (\ocal , \rd ).
$$
If $p=2$, then ${L}^{2}(\ocal , \rd )$ is a Hilbert space with the scalar product given by
$$
  \ilsk{u}{v}{{L}^{2}} := \int_{\ocal } u(x) \cdot v (x) \, dx , \qquad u,v \in {L}^{2}(\ocal ,\rd ).
$$
Let ${H}^{1}(\ocal ,\rd )$ stand for the Sobolev space of all $u \in {L}^{2}(\ocal ,\rd )$ for which
there exist weak derivatives $\frac{\partial u}{\partial {x}_{i}} \in {L}^{2}(\ocal ,\rd )$, $i=1,2,...,d$.
It is a Hilbert space with the scalar product given by
$$
   \ilsk{u}{v}{{H}^{1}} := \ilsk{u}{v}{{L}^{2}} +  \dirilsk{u}{v}{} , \qquad u,v \in
   {H}^{1}(\ocal ,\rd ),
$$
where
\begin{equation} \label{E:il_sk_Dir}
  \dirilsk{u}{v}{} :=
\ilsk{\nabla u}{\nabla v}{{L}^{2}}
=\sum_{i=1}^{d}\int_{\ocal }\frac{\partial u}{\partial {x}_{i}} \cdot \frac{\partial v}{\partial {x}_{i}} \, dx  , \qquad u,v \in  {H}^{1}(\ocal ,\rd ).
\end{equation}
Let ${\ccal }^{\infty }_{c} (\ocal , \rd )$ denote the space of all $\rd $-valued functions of class ${\ccal }^{\infty }$ with compact supports contained in $\ocal $ and
let
\begin{eqnarray*}
 &  &\vcal := \{ u \in {\ccal }^{\infty }_{c} (\ocal , \rd ) : \, \, \diver u= 0 \}  ,\\
 & & H := \mbox{the closure of $\vcal $ in ${L}^{2}(\ocal , \rd )$} , \\
 & & V := \mbox{the closure of $\vcal $ in ${H}^{1}(\ocal , \rd )$} .
\end{eqnarray*}
In the space $H$ we consider the scalar product and the norm inherited from ${L}^{2}(\ocal , \rd )$ and
denote them by $\ilsk{\cdot }{\cdot }{H}$ and $|\cdot {|}_{H}$, respectively, i.e.
$$
\ilsk{u}{v}{H}:= \ilsk{u}{v}{{L}^{2}} , \qquad
 | u {|}_{H} := \norm{u}{{L}^{2}}{} , \qquad u, v \in H.
$$
In the space $V$ we consider the scalar product
inherited from ${H}^{1}(\ocal , \rd )$, i.e.
\begin{equation} \label{E:V_il_sk}
  \ilsk{u}{v}{V} := \ilsk{u}{v}{H} + \dirilsk{u}{v}{} ,
\end{equation}
where $\dirilsk{\cdot }{\cdot }{}$ is defined in (\ref{E:il_sk_Dir}),
and the norm  induced by $\ilsk{\cdot }{\cdot }{V}$, i.e.
\begin{equation} \label{E:norm_V}
  \norm{u}{V}{2} := |u {|}_{H}^{2} + \norm{u}{}{2},
\end{equation}
where $\norm{u}{}{2} := \norm{\nabla u}{{L}^{2}}{2}$.

\bigskip  \noindent
Let us consider the following tri-linear form
\begin{equation}  \label{E:form_b}
     b(u,w,v ) = \int_{\ocal  }\bigl( u \cdot \nabla w \bigr) v \, dx .
\end{equation}
We will recall the fundamental properties of the form $b$. Since usually one considers the bounded domain case we want to recall only those results that are valid in unbounded domains as well.

\bigskip \noindent
By the Sobolev Embedding Theorem, see \cite{Adams}, and the H\"{o}lder inequality, we obtain the following estimates
\begin{eqnarray}
    |b(u,w,v )|
& \le & {\| u\| }_{{L}^{4}} \norm{w }{V}{}{\| v\| }_{{L}^{4}}   
 \\
& \le & c \norm{u }{V}{} \norm{w }{V}{} \norm{v }{V}{} , \qquad u,w,v \in V   \label{E:b_estimate_V}
\end{eqnarray}
for some positive constant $c$. Thus the form $b$ is continuous on $V$, see also \cite{Temam79}.
Moreover, if we define a bilinear map $B$ by $B(u,w):=b(u,w, \cdot )$, then by inequality (\ref{E:b_estimate_V}) we infer that $B(u,w) \in {V}_{}^{\prime }$ for all $u,w\in V$ and that the following inequality holds
\begin{equation}  \label{E:estimate_B}
 |B(u,w) {|}_{V^{\prime }} 
   \le c  \norm{u }{V}{}\norm{w }{V}{},\qquad u,w \in V .
\end{equation}
Moreover, the mapping $B: V \times V \to V^{\prime } $ is bilinear and continuous.

\bigskip  \noindent
Let us also recall the following properties of the form $b$, see Temam \cite{Temam79}, Lemma II.1.3,
\begin{equation}  \label{E:antisymmetry_b}
b(u,w, v ) =  - b(u,v ,w), \qquad u,w,v \in V .
\end{equation}
In particular,
\begin{equation}  \label{E:wirowosc_b}
b(u,v,v) =0   \qquad u,v \in V.
\end{equation}

\bigskip  \noindent
Let us, for any $s>0$ define the following standard scale of Hilbert spaces
$$
  {V}_{s} := \mbox{the closure of $\vcal $ in ${H}^{s}(\ocal , \rd )$} .
$$
If $s > \frac{d}{2} +1$ then by the Sobolev Embedding Theorem,
$$
    {H}^{s-1}(\ocal  , \rd ) \hookrightarrow  {\ccal }_{b}(\ocal , \rd )
   \hookrightarrow {L}^{\infty } (\ocal , \rd ).
$$
Here ${\ccal }_{b}(\ocal , \rd )$ denotes the space of continuous and bounded $\rd $-valued functions defined on $\ocal $.
If $u,w \in V$ and $v \in {V}_{s}$ with $s > \frac{d}{2} +1$ then
$$
 |b(u,w,v)|  =  |b(u,v,w)|
   \le   \norm{u}{{L}^{2}}{} \norm{w}{{L}^{2}}{} \norm{\nabla v}{{L}^{\infty }}{}
 \le  {c}_{} \norm{u}{{L}^{2}}{} \norm{w}{{L}^{2}}{} \norm{v}{{V}_{s}}{}
$$
for some constant ${c}_{} >0 $. Thus, $b$ can be uniquely extended to the tri-linear form (denoted by the same letter)
$$
   b : H \times H \times {V}_{s} \to \rzecz
$$
and $|b(u,w,v)| \le  {c}_{} \norm{u}{{L}^{2}}{} \norm{w}{{L}^{2}}{} \norm{v}{{V}_{s}}{}$
for $u,w \in H$ and $v \in {V}_{s}$. At the same time the operator $B$ can be uniquely extended
to a bounded bilinear operator
$$
    B : H \times H \to {V}_{s}^{\prime } .
$$
In particular, it satisfies the following estimate
\begin{equation}  \label{E:estimate_B_ext}
 |B(u,w) {|}_{{V}_{s}^{\prime }} \le c {|u|}_{H}  {|w|}_{H} ,\qquad u,w \in H.
\end{equation}
We will also use the following notation, $B(u):=B(u,u)$.

\bigskip
\begin{lemma} \ \label{L:estimate_B}
The map $B:V \to V^{\prime }$ is locally Lipschitz continuous, i.e. for every $r>0$ there exists a constant ${L}_{r}$ such that
\begin{equation}
   \bigl| B(u) - B(\tilde{u}) {\bigr| }_{V^{\prime }} \le {L}_{r} \norm{u - \tilde{u}}{V}{} ,
   \qquad u , \tilde{u } \in V , \quad \norm{u}{V}{}, \norm{\tilde{u}}{V}{} \le r  .
\end{equation}
\end{lemma}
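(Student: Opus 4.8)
The plan is to reduce everything to the bilinearity of $B:V\times V \to V^\prime$ together with the boundedness estimate (\ref{E:estimate_B}), which have both already been established. Recall that by definition $B(u):=B(u,u)$, so the difference $B(u)-B(\tilde u)$ is of the form $B(u,u)-B(\tilde u,\tilde u)$, and the key observation is that this can be split into two terms each of which is linear in the increment $u-\tilde u$.

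Concretely, I would start from the algebraic identity
\begin{equation} \label{E:B_splitting}
  B(u)-B(\tilde u)=B(u,u)-B(\tilde u,\tilde u)=B(u-\tilde u,\,u)+B(\tilde u,\,u-\tilde u),
\end{equation}
which follows immediately from the bilinearity of $B$, since the right-hand side equals $B(u,u)-B(\tilde u,u)+B(\tilde u,u)-B(\tilde u,\tilde u)$. This is the only genuinely structural step; it isolates the increment $u-\tilde u$ in each summand, exactly as in the classical deterministic Navier--Stokes setting.

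Next I would apply the triangle inequality in $V^\prime$ to (\ref{E:B_splitting}) and then the continuity estimate (\ref{E:estimate_B}) to each of the two terms, obtaining
\begin{eqnarray*}
  \bigl| B(u)-B(\tilde u)\bigr|_{V^\prime}
  &\le& \bigl|B(u-\tilde u,\,u)\bigr|_{V^\prime}+\bigl|B(\tilde u,\,u-\tilde u)\bigr|_{V^\prime}\\
  &\le& c\,\norm{u-\tilde u}{V}{}\norm{u}{V}{}+c\,\norm{\tilde u}{V}{}\norm{u-\tilde u}{V}{}\\
  &=& c\bigl(\norm{u}{V}{}+\norm{\tilde u}{V}{}\bigr)\norm{u-\tilde u}{V}{}.
\end{eqnarray*}
Finally, restricting to the ball $\norm{u}{V}{},\norm{\tilde u}{V}{}\le r$ bounds the factor $\norm{u}{V}{}+\norm{\tilde u}{V}{}$ by $2r$, so the claim holds with the explicit constant $L_r:=2cr$, where $c$ is the constant from (\ref{E:estimate_B}).

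I do not expect any real obstacle here: once the bilinear splitting (\ref{E:B_splitting}) is written down, the estimate is an immediate consequence of the already-proved continuity of $B:V\times V\to V^\prime$. The only point that deserves a moment's care is making sure the decomposition is the symmetric one that exposes $u-\tilde u$ in \emph{both} slots, rather than leaving a leftover quadratic term; the version in (\ref{E:B_splitting}) does exactly that and is what yields the linear dependence on $\norm{u-\tilde u}{V}{}$.
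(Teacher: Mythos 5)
Your proof is correct and follows essentially the same route as the paper: split $B(u,u)-B(\tilde u,\tilde u)$ by bilinearity into two terms each linear in $u-\tilde u$, apply the continuity estimate (\ref{E:estimate_B}) to each, and bound the resulting factor by $2r$. The only (immaterial) difference is that the paper writes the decomposition as $B(u,u-\tilde u)+B(u-\tilde u,\tilde u)$ while you use the mirror version $B(u-\tilde u,u)+B(\tilde u,u-\tilde u)$; both give $L_r=2r\norm{B}{}{}$.
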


\bigskip
\begin{proof}
This is classical but for completeness we provide the proof. The assertion follows from the following estimates
\begin{eqnarray*}
\bigl| B(u,u) -  B(\tilde{u},\tilde{u}) {\bigr| }_{V^{\prime }}
 & \le &\bigl| B(u,u-\tilde{u})  {\bigr| }_{V^{\prime }}
   + \bigl| B(u-\tilde{u},\tilde{u}) {\bigr| }_{V^{\prime }} \\
&  \le & \norm{B}{}{}  (\norm{u}{V}{} +\norm{\tilde{u}}{V}{} ) \norm{u-\tilde{u}}{V}{}
  \le 2r  \norm{B}{}{}  \cdot \norm{u-\tilde{u}}{V}{} .
\end{eqnarray*}
Thus the Lipschitz condition holds with ${L}_{r}= 2r \norm{B}{}{} $, where $\norm{B}{}{}$ stands for the norm of the bilinear map $B:V\times V \to V^{\prime }$. The proof is thus complete.
\end{proof}

\bigskip
\subsection{Some operators} \label{S:Some_operators}

\bigskip  \noindent
Consider the natural embedding $j: V \hookrightarrow H $ and its adjoint ${j}^\ast : H \to V $.
Since the range of $j$ is dense in $H$, the map ${j}^\ast$ is one-to-one.
Let us put
\begin{eqnarray}
 & & D(A) := {j}^\ast(H) \subset V , \nonumber \\
 & & Au := \bigl( {j}^\ast {\bigr) }^{-1} u , \qquad u \in D(A) . \label{E:op_A}
\end{eqnarray}
Notice that for all $u \in D(A)$ and $v \in V $
\begin{equation} \label{E:op_A_coerciv}
     \ilsk{Au}{v}{H} = \ilsk{u}{v}{V}.
\end{equation}
Indeed, this follows immediatelly from the following equalities
\begin{equation*} \label{E:op_A_coerciv_proof}
(Au|v{)}_{H} = (( {j}^{*} {)}^{-1} u |v{)}_{H} = (( {j}^{*} {)}^{-1} u |jv{)}_{H}
        = \ilsk{{j}^{*}( {j}^{*} {)}^{-1}u}{v}{V} = \ilsk{u}{v}{V} .
\end{equation*}
Let
$$
     \acal u :=\dirilsk{u}{\cdot }{} , \qquad u \in V,
$$
where $\dirilsk{\cdot }{\cdot }{}$ is defined by (\ref{E:il_sk_Dir}). Let us notice that
if $u \in V$, then $\acal u \in V^\prime $ and
\begin{equation} \label{E:Acal_V'_norm}
 |\acal u{|}_{V^\prime }\le \norm{u}{}{}.
\end{equation}
Indeed,  from (\ref{E:norm_V}) and the following inequalities
$$
  |\dirilsk{u}{v}{}| \le \norm{u}{}{} \cdot \norm{v}{}{}
   \le \norm{u}{}{} (\norm{v}{}{2} + |v{|}_{H}^{2} {)}^{\frac{1}{2}}
   = \norm{u}{}{} \cdot \norm{v}{V}{}, \quad v \in V,
$$
it follows that $\acal u \in V^{\prime }$ and inequality (\ref{E:Acal_V'_norm}) holds. Denoting by $\dual{\cdot }{\cdot }{}$ the dual pairing between $V$ and $V^{\prime }$, i.e.
$\dual{\cdot }{\cdot }{}:=\ddual{V^\prime }{\cdot }{\cdot }{}{}_{V}$,
 we have the following equality
\begin{equation}  \label{E:Acal_ilsk_Dir}
   \dual{\acal u}{v}{} = \dirilsk{u}{v}{} , \qquad u,v \in V.
\end{equation}

\bigskip
\begin{lemma} \label{L:A_acal_rel}\
\begin{description}
\item[(a)] For any $u \in D(A)$ and $v \in V$:
$$
((A-I)u|v{)}_{H} =  \dirilsk{u}{v}{} = \dual{\acal  u }{v}{} ,
$$
where $I$ stands for the identity operator on $H$.
In particular,
\begin{equation}
         |\acal u{|}_{V^{\prime }}\le |(A-I)u{|}_{H}.
\end{equation}
\item[(b)] $D(A)$ is dense in $H$.
\end{description}
\end{lemma}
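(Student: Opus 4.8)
\emph{Part (a).} The plan is to read off the identity directly from the coercivity relation \eqref{E:op_A_coerciv}. Starting from $\ilsk{Au}{v}{H}=\ilsk{u}{v}{V}$ for $u\in D(A)$, $v\in V$, I substitute the definition \eqref{E:V_il_sk} of the $V$-scalar product, $\ilsk{u}{v}{V}=\ilsk{u}{v}{H}+\dirilsk{u}{v}{}$, and subtract $\ilsk{u}{v}{H}=\ilsk{Iu}{v}{H}$ from both sides to obtain $\ilsk{(A-I)u}{v}{H}=\dirilsk{u}{v}{}$. The remaining equality $\dirilsk{u}{v}{}=\dual{\acal u}{v}{}$ is precisely \eqref{E:Acal_ilsk_Dir}, so the displayed chain of identities follows. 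For the ``in particular'' estimate I would insert this identity into the definition of the $V^{\prime}$-norm: by Cauchy--Schwarz in $H$, $|\dual{\acal u}{v}{}|=|\ilsk{(A-I)u}{v}{H}|\le |(A-I)u{|}_{H}\,|v{|}_{H}$, and since $|v{|}_{H}\le \norm{v}{V}{}$ by \eqref{E:norm_V}, taking the supremum over $v\in V$ with $\norm{v}{V}{}\le 1$ gives $|\acal u{|}_{V^{\prime}}\le |(A-I)u{|}_{H}$.

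\emph{Part (b).} This is the substantive part. I would deduce density of $D(A)$ in $H$ from two facts: that $D(A)$ is dense in $V$ (in the $V$-norm), and that $V$ is dense in $H$ (the latter because $\vcal\subset V$ while $H$ is by definition the $L^{2}$-closure of $\vcal$); since $|\cdot{|}_{H}\le\norm{\cdot}{V}{}$, the first gives density of $D(A)$ in $V$ also in the $H$-topology, and combining with the second yields the claim. To prove density of $D(A)=j^{\ast}(H)$ in $V$, I would argue by orthogonality. Suppose $v_{0}\in V$ is orthogonal in $V$ to every element of $D(A)$; writing a generic element as $j^{\ast}w$ with $w\in H$ and using the defining adjoint relation $\ilsk{v_{0}}{j^{\ast}w}{V}=\ilsk{jv_{0}}{w}{H}=\ilsk{v_{0}}{w}{H}$ (identifying $jv_{0}$ with $v_{0}$ in $H$), the orthogonality hypothesis becomes $\ilsk{v_{0}}{w}{H}=0$ for all $w\in H$. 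Choosing $w=v_{0}$ forces $v_{0}=0$, so $D(A)$ is dense in $V$, hence in $H$.

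The only real subtlety lies in (b), and it is conceptual rather than computational: one must apply the adjoint relation for $j$ correctly and keep the $H$- and $V$-inner products distinct, while using the identification $H\cong H^{\prime}$. Once the orthogonality argument has been reduced to ``$\ilsk{v_{0}}{w}{H}=0$ for all $w\in H$'', the possible unboundedness of $\ocal$ is irrelevant, so no compactness of the embedding $V\hookrightarrow H$ is needed here. I expect part (a) to be entirely routine.
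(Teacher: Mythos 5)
Your proposal is correct and follows essentially the same route as the paper: part (a) is the same substitution of \eqref{E:V_il_sk} and \eqref{E:Acal_ilsk_Dir} into \eqref{E:op_A_coerciv}, and part (b) is the same reduction to density of $D(A)$ in $V$ followed by the orthogonality argument (your use of the adjoint relation for $j$ is just the paper's step ``$\ilsk{u}{w}{V}=\ilsk{Au}{w}{H}$ together with surjectivity of $A$'' written in terms of $u=j^{\ast}w$). No gaps.
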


\begin{proof}
To prove assertion (a), let $u\in D(A)$ and $v\in V$. By (\ref{E:op_A_coerciv}), (\ref{E:V_il_sk}) and (\ref{E:Acal_ilsk_Dir}), we have
\begin{eqnarray*}
(Au|v{)}_{H} = \ilsk{u}{v}{V}
= \ilsk{u}{v}{H} + \dirilsk{u}{v}{}
        = \ilsk{Iu}{v}{H} + \dual{ \acal  u}{v}{}.
\end{eqnarray*}
Let us move to the proof of part (b). Since $V$ is dense in $H$, it is sufficient to prove that $D(A)$ is dense in $V$. Let $w\in V$ be an arbitrary element orthogonal to $D(A)$ with respect to the scalar product in $V$. Then
$$
     \ilsk{u}{w}{V} = 0 \qquad \mbox{for} \quad u \in D(A).
$$
On the other hand, by (a) and (\ref{E:V_il_sk}),
$\ilsk{u}{w}{V} = \ilsk{Au}{w}{H}$ for $u \in D(A)$.
Hence $\ilsk{Au}{w}{H}=0$ for $u\in D(A)$. Since $A:D(A) \to H$ is onto, we infer that $w=0$,
which completes the proof.
\end{proof}

\bigskip  \noindent
Let us assume that $s > 1$. It is clear that ${V}_{s}=D(A^s)$ is dense in $V$ and the embedding ${j}_{s}: {V}_{s} \hookrightarrow V$ is continuous. Then by Lemma C.1 in Appendix C,
there exists a Hilbert space $U$ such that $U \subset  {V}_{s}$, $U$ is dense in $ {V}_{s}$ and
\begin{equation} \label{E:U_comp_V_s}
 \mbox{\it the natural embedding ${\iota }_{s}: U \hookrightarrow  {V}_{s}$ is compact \rm }.
\end{equation}
Then we have
\begin{equation} \label{E:embeddings}
  U \underset{{\iota }_{s}}{\hookrightarrow } {V}_{s}
  \underset{{j }_{s}}{\hookrightarrow }V
 \underset{{j}_{}}{\hookrightarrow} H \cong H^{\prime }
\underset{j^{\prime }}{\hookrightarrow } V^{\prime }
\underset{{j}_{s}^{\prime }}{\hookrightarrow } {V}_{s}^{\prime }
  \underset{{\iota }_{s}^{^{\prime }}}{\hookrightarrow }U^{\prime } .
\end{equation}
Since the embedding  ${\iota }_{s}$ is compact, ${\iota }_{s}^{\prime }$ is compact as well.
Consider the composition
$$
   \iota := j \circ {j}_{s} \circ {\iota }_{s} : U \hookrightarrow H
$$
and its adjoint
$$
   {\iota }^\ast := (j \circ {j}_{s} \circ {\iota }_{s} )^\ast
  ={\iota }_{s} ^\ast \circ {j}_{s}^\ast \circ {j}^\ast :  H \to U .
$$
Note that $\iota $ is compact and since the range of $\iota $ is dense in $H$, ${\iota }^\ast : H \to U $ is one-to-one. Let us put
\begin{eqnarray}
& & D(L) := {\iota }^\ast(H) \subset U , \nonumber \\
& & Lu := \bigl( {\iota }^\ast {\bigr) }^{-1} u , \qquad u \in D(L) .  \label{E:op_L}
\end{eqnarray}
It is clear that $L:D(L) \to H $ is onto. Let us also notice that
\begin{equation} \label{E:op_L_ilsk}
    \ilsk{Lu}{w}{H} = \ilsk{u}{w}{U}, \qquad u \in D(L), \quad w \in U .
\end{equation}
Indeed, by (\ref{E:op_L}) we have for all $u \in D(L)$ and $ w \in U$
$$
 \ilsk{Lu}{w}{H}=\ilsk{({\iota }^\ast{)}^{-1}u}{\iota w}{H} =
 \ilsk{{\iota }^\ast({\iota }^\ast{)}^{-1}u}{ w}{U} = \ilsk{u}{w}{U} \, ,
$$
which proves (\ref{E:op_L_ilsk}).
By equality (\ref{E:op_L_ilsk}) and the density of $U$ in $H$, we infer similarly as in the proof of assertion (b) in Lemma \ref{L:A_acal_rel} that $D(L)$ is dense in $H$.

\bigskip  \noindent
Moreover, for $u \in D(L)$,
$$
Lu = \bigl( {\iota }^\ast {\bigr) }^{-1} u
   = \bigl( {\iota }_{s} ^\ast \circ {j}_{s}^\ast \circ {j}^\ast  {\bigr) }^{-1} u
   = \bigl(  {j}^\ast {\bigr) }^{-1} \circ \bigl( {j}_{s}^\ast {\bigr) }^{-1} \circ   \bigl( {\iota }_{s} ^\ast {\bigr) }^{-1} u
   = A \circ \bigl( {j}_{s}^\ast {\bigr) }^{-1} \circ   \bigl( {\iota }_{s} ^\ast {\bigr) }^{-1} u,
$$
where $A$ is defined by (\ref{E:op_A}).

Let us also put
\begin{eqnarray}
& & D({L}_{s}) := {\iota }_{s}^\ast({V}_{s}) \subset U , \nonumber \\
& & {L}_{s}u := \bigl( {\iota }_{s}^\ast {\bigr) }^{-1} u , \qquad u \in D({L}_{s}) \label{E:op_L_s}
\end{eqnarray}
and
\begin{eqnarray}
& & D({A}_{s}) := {j}_{s}^\ast(V) \subset {V}_{s},\nonumber \\
& & {A}_{s}u := \bigl( {j }_{s}^\ast {\bigr) }^{-1} u , \qquad u \in D({A}_{s}) . \label{E:op_A_s}
\end{eqnarray}
The operators ${L}_{s}:D({L}_{s})\to {V}_{s}$ and ${A}_{s}:D({A}_{s})\to V$ are densely defined and onto.
Let us also notice that
\begin{equation}  \label{E:L=AAsLs}
L  = A \circ {A}_{s} \circ {L}_{s} .
\end{equation}

\bigskip  \noindent
Since $L$ is self-adjoint and ${L}^{-1}$ is compact, there exists an orthonormal basis $\{ {e}_{i} {\} }_{i \in \nat }$ of $H$ composed of the eigenvectors of operator $L$. Let ${\lambda }_{i}$ be the eigenvalue corresponding to ${e}_{i}$, i.e.
\begin{equation} \label{E:eigen_L}
    L{e}_{i} = {\lambda }_{i} {e}_{i} , \qquad i \in \nat .
\end{equation}
Notice that ${e}_{i} \in U $, $i \in \nat $, because $D(L) \subset U$.
Let us fix $n \in \nat $ and let
$span \{ {e}_{1},..., {e}_{n}\} $ denote the linear space spanned by the vectors ${e}_{1},...,{e}_{n}$.
Let $\Pn $ be the operator from $U^{\prime }$ to $span \{ {e}_{1},..., {e}_{n}\} $ defined by
\begin{equation} \label{E:tP_n}
  \Pn {u}^\ast := \sum_{i=1}^{n}  \ddual{U^{\prime }}{{u}^{\ast }}{{e}_{i}}{U} {e}_{i}, \qquad {u}^\ast \in U^{\prime }.
\end{equation}
We will consider the restriction  of the operator  $\Pn $ to the space $H$ denoted still by $\Pn $. More precisely,
we have $H \hookrightarrow U^{\prime }$, i.e. every element $u \in H $ induces a functional ${u}^\ast \in U^{\prime }$ by the formula
\[
    \ddual{{U}^{\prime }}{{u}^{\ast }}{v}{U}:= \ilsk{u}{v}{H} , \qquad v \in U.
\]
Thus  the  restriction of $\Pn $ to $H$ is given by
\begin{equation} \label{E:P_n}
  \Pn u = \sum_{i=1}^{n} \ilsk{ u}{ {e}_{i}}{H} {e}_{i}, \qquad u \in H .
\end{equation}
Hence in particular, $\Pn $  is the $\ilsk{\cdot }{\cdot }{H}$-orthogonal projection from $H$
onto $span \{ {e}_{1},..., {e}_{n}\} $. Restrictions of $\Pn $ to other spaces considered in (\ref{E:embeddings}) will also be denoted by $\Pn $.

\bigskip
\begin{lemma}
The following equality holds
\begin{equation}  \label{E:tP_n-P_n}
   \ilsk{\Pn {u}^\ast}{v}{H} = \ddual{{U}^{\prime }}{{u}^{\ast }}{\Pn v}{U} , \qquad {u}^\ast \in U^{\prime }, \quad v \in U.
\end{equation}
\end{lemma}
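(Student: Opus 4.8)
The plan is to prove the identity by a direct computation, expanding each side according to the two descriptions of $\Pn$ recorded in (\ref{E:tP_n}) and (\ref{E:P_n}) and then matching the resulting finite sums term by term. The key preliminary observation, already noted after (\ref{E:eigen_L}), is that $e_i \in U$ for every $i\in\nat$; this is what makes both sides of (\ref{E:tP_n-P_n}) meaningful.

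First I would treat the left-hand side. Since ${u}^\ast \in U^{\prime}$, definition (\ref{E:tP_n}) gives $\Pn {u}^\ast = \sum_{i=1}^{n} \ddual{U^{\prime}}{u^{\ast}}{e_i}{U}\, e_i$, which belongs to the linear span of $e_1,\dots,e_n$, hence to $H$ because each $e_i \in U \subset H$. Consequently, by bilinearity of the scalar product $\ilsk{\cdot}{\cdot}{H}$,
$$
 \ilsk{\Pn {u}^\ast}{v}{H} = \sum_{i=1}^{n} \ddual{U^{\prime}}{u^{\ast}}{e_i}{U}\, \ilsk{e_i}{v}{H}.
$$

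Next I would treat the right-hand side. For $v \in U \subset H$ the value $\Pn v$ is computed by the $H$-formula (\ref{E:P_n}), namely $\Pn v = \sum_{i=1}^{n} \ilsk{v}{e_i}{H}\, e_i$; since each $e_i \in U$, this finite linear combination again lies in $U$, so the pairing $\ddual{U^{\prime}}{u^{\ast}}{\Pn v}{U}$ is well defined, and by linearity of the duality pairing in its second argument,
$$
 \ddual{U^{\prime}}{u^{\ast}}{\Pn v}{U} = \sum_{i=1}^{n} \ilsk{v}{e_i}{H}\, \ddual{U^{\prime}}{u^{\ast}}{e_i}{U}.
$$

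Comparing the two displays, the identity follows at once from the symmetry $\ilsk{e_i}{v}{H} = \ilsk{v}{e_i}{H}$ of the real scalar product in $H$. There is no genuine obstacle here; the only point requiring care is the consistency of the two meanings of $\Pn$ — the operator on $U^{\prime}$ from (\ref{E:tP_n}) and the $\ilsk{\cdot}{\cdot}{H}$-orthogonal projection from (\ref{E:P_n}) — and once one checks, via $e_i \in U$, that $\Pn {u}^\ast \in H$ and $\Pn v \in U$, both sides are legitimate finite sums and the verification is purely mechanical.
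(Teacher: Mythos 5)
Your proof is correct and is essentially the same computation as the paper's: both expand $\Pn u^\ast$ via (\ref{E:tP_n}) and $\Pn v$ via (\ref{E:P_n}) and identify the two resulting finite sums $\sum_{i=1}^{n} \ddual{U^{\prime}}{u^{\ast}}{e_i}{U}\ilsk{v}{e_i}{H}$; the paper merely writes this as a single chain of equalities from left to right rather than expanding both sides separately. Your explicit remarks that $e_i\in U$ makes both sides well defined are a harmless (and correct) addition.
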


\bigskip
\begin{proof}
Let us fix ${u}^\ast \in U^{\prime }$ and  $v \in U $. By (\ref{E:tP_n}) and (\ref{E:P_n})
the assertion follows from the following equalities
\begin{eqnarray*}
 \ilsk{\Pn {u}^{\ast} }{v}{H}
& = & \Ilsk{\sum_{i=1}^{n} \ddual{{U}^{\prime }}{{u}^{\ast }}{{e}_{i}}{U} {e}_{i}}{v}{H}
= \sum_{i=1}^{n} \ddual{{U}^{\prime }}{{u}^{\ast }}{{e}_{i}}{U}
 \ilsk{v}{{e}_{i}}{H} \\
& = &
\ddual{{U}^{\prime }}{{u}^{\ast }}{\sum_{i=1}^{n}\ilsk{v}{{e}_{i}}{H}{e}_{i}}{U}
= \ddual{{U}^{\prime }}{{u}^\ast}{\Pn v }{U}.
\end{eqnarray*}
The proof of (\ref{E:tP_n-P_n}) is thus complete.
\end{proof}

\bigskip  \noindent
Let us denote
$$
   {\tilde{e}}_{i}:= \frac{{e}_{i}}{\norm{{e}_{i}}{U}{}} , \qquad i \in \nat .
$$

\bigskip
\begin{lemma}  \label{L:P_n|U}
\begin{itemize}
\item[(a) ] The system $\{ {\tilde{e}}_{i} {\} }_{i \in \nat }$ is the  orthonormal basis in the space $\bigl( U,\ilsk{\cdot }{\cdot }{U}\bigr) $. Moreover,
\begin{equation}  \label{E:lambda_norm_U}
   {\lambda }_{i} = \norm{{e}_{i}}{U}{2} , \qquad i \in \nat .
\end{equation}
\item[(b) ] For every $n \in \nat $ and $u \in U $
\begin{equation} \label{E:P_n|U}
  \Pn u = \sum_{i=1}^{n} \ilsk{u}{{\tilde{e}}_{i}}{U} {\tilde{e}}_{i},
\end{equation}
i.e. the restriction of $\Pn $ to $U$ is the $\ilsk{\cdot }{\cdot }{U}$-orthogonal projection onto $span \{ {e}_{1},...,{e}_{n} \} $.
\item[(c) ] For every $u\in U$ and $s>0$ we have
\begin{itemize}
\item[(i) ]  $\lim_{n \to \infty }  \norm{\Pn u -u }{U}{} =0$,
\item[(ii) ]  $\lim_{n \to \infty }  \norm{\Pn u -u }{{V}_{s}}{} =0$,
\item[(iii) ]  $\lim_{n \to \infty }  \norm{\Pn u -u }{V}{} =0$.
\end{itemize}
\end{itemize}
\end{lemma}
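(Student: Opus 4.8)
The plan is to reduce all three parts to a single identity relating the $U$- and $H$-inner products of the eigenvectors, namely
$$
\ilsk{{e}_{i}}{w}{U} = {\lambda }_{i} \ilsk{{e}_{i}}{w}{H}, \qquad w \in U, \quad i \in \nat .
$$
I would obtain this by combining the defining property (\ref{E:op_L_ilsk}) of $L$ with the eigenvalue relation (\ref{E:eigen_L}): since ${e}_{i}\in D(L)$, applying (\ref{E:op_L_ilsk}) with $u={e}_{i}$ gives $\ilsk{L{e}_{i}}{w}{H}=\ilsk{{e}_{i}}{w}{U}$ for all $w\in U$, and substituting $L{e}_{i}={\lambda }_{i}{e}_{i}$ turns the left-hand side into ${\lambda }_{i}\ilsk{{e}_{i}}{w}{H}$.

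For part (a), taking $w={e}_{i}$ in the identity and using $|{e}_{i}{|}_{H}=1$ (the $\{ {e}_{i}\} $ being $H$-orthonormal) yields $\norm{{e}_{i}}{U}{2}={\lambda }_{i}$, which is (\ref{E:lambda_norm_U}); taking $w={e}_{j}$ with $j\ne i$ gives $\ilsk{{e}_{i}}{{e}_{j}}{U}={\lambda }_{i}\ilsk{{e}_{i}}{{e}_{j}}{H}=0$, so after normalising the system $\{ {\tilde{e}}_{i}\} $ is orthonormal in $U$. For completeness I would take any $w\in U$ with $\ilsk{w}{{e}_{i}}{U}=0$ for all $i$; the identity then forces ${\lambda }_{i}\ilsk{w}{{e}_{i}}{H}=0$, and since ${\lambda }_{i}=\norm{{e}_{i}}{U}{2}>0$ we deduce $\ilsk{w}{{e}_{i}}{H}=0$ for every $i$. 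As $\{ {e}_{i}\} $ is an orthonormal basis of $H$ this gives $w=0$ in $H$, hence $w=0$ in $U$ by injectivity of the embedding $U\hookrightarrow H$. This completeness step, where $U$-orthogonality has to be transported to $H$-orthogonality through the eigenvalues, is the only point requiring genuine care; everything else is routine Hilbert-space bookkeeping.

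Part (b) is then a direct computation. For $u\in U$ the identity gives
$$
\ilsk{u}{{\tilde{e}}_{i}}{U}{\tilde{e}}_{i}
= \frac{\ilsk{u}{{e}_{i}}{U}}{\norm{{e}_{i}}{U}{2}}{e}_{i}
= \frac{{\lambda }_{i}\ilsk{u}{{e}_{i}}{H}}{{\lambda }_{i}}{e}_{i}
= \ilsk{u}{{e}_{i}}{H}{e}_{i} ,
$$
so summing over $i=1,\dots ,n$ and comparing with (\ref{E:P_n}) shows $\Pn u=\sum_{i=1}^{n}\ilsk{u}{{\tilde{e}}_{i}}{U}{\tilde{e}}_{i}$; by part (a) this is exactly the $\ilsk{\cdot }{\cdot }{U}$-orthogonal projection onto $span\{ {e}_{1},\dots ,{e}_{n}\} $.

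Finally, part (c)(i) is immediate from (b): $\Pn u$ is the $n$-th partial sum of the Fourier expansion of $u$ in the orthonormal basis $\{ {\tilde{e}}_{i}\} $ of $U$ established in (a), so $\norm{\Pn u-u}{U}{}\to 0$. Parts (c)(ii) and (c)(iii) follow from (i) together with the continuity of the embeddings $U\hookrightarrow {V}_{s}$ and ${V}_{s}\hookrightarrow V$ recorded in (\ref{E:embeddings}), which yield constants with $\norm{\Pn u-u}{{V}_{s}}{}\le C\norm{\Pn u-u}{U}{}$ and $\norm{\Pn u-u}{V}{}\le C^{\prime }\norm{\Pn u-u}{{V}_{s}}{}$, so convergence in the strongest space $U$ propagates to the weaker ones.
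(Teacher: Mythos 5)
Your proposal is correct and follows essentially the same route as the paper: both rest on the identity $\ilsk{{e}_{i}}{w}{U}={\lambda }_{i}\ilsk{{e}_{i}}{w}{H}$ obtained from (\ref{E:op_L_ilsk}) and (\ref{E:eigen_L}), use it for orthonormality, positivity of ${\lambda }_{i}=\norm{{e}_{i}}{U}{2}$, and the completeness argument transporting $U$-orthogonality to $H$-orthogonality, then derive (b) by the same computation and (c) from (b) and the continuity of the embeddings. The only cosmetic difference is that you state the key identity for arbitrary $w\in U$ rather than only for $w={e}_{j}$, which changes nothing of substance.
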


\bigskip
\begin{proof}
By (\ref{E:op_L_ilsk}) and (\ref{E:eigen_L})
\begin{equation} \label{E:ei_ej_U}
   \ilsk{{e}_{i}}{{e}_{j}}{U} = \ilsk{L{e}_{i}}{{e}_{j}}{H} = {\lambda }_{i} \ilsk{{e}_{i}}{{e}_{j}}{H}
    = {\lambda }_{i} {\delta }_{ij}, \qquad i,j \in \nat ,
\end{equation}
where ${\delta }_{ij}:= 1 $ if $  i=j$ and ${\delta }_{ij}:= 0 $ if $ i \ne j $.
In particular, equality (\ref{E:lambda_norm_U}) holds.
By (\ref{E:ei_ej_U}) and (\ref{E:lambda_norm_U}) the system $\{ {\tilde{e}}_{i} { \} }_{i \in \nat }$ is orthonormal in $U$. To prove that it is a basis in $U$ let $h\in U$ be an arbitrary vector $\ilsk{\cdot }{\cdot }{U}$-orthogonal to each ${e}_{i}$, $i \in \nat $. By (\ref{E:op_L_ilsk}) and (\ref{E:eigen_L}) we obtain
the following equalities
$$
  0= \ilsk{h}{{e}_{i}}{U} = \ilsk{h}{L{e}_{i}}{H} = {\lambda }_{i} \ilsk{h}{{e}_{i}}{H} , \qquad i \in \nat .
$$
Since ${\lambda }_{i} >0 $,
\begin{equation}
   \ilsk{h}{{e}_{i}}{H} =0 , \qquad i \in \nat .
\end{equation}
Since $\{ {e}_{i} {\} }_{i \in \nat }$ is the $\ilsk{\cdot }{\cdot }{H}$-orthonormal basis in $H$, we infer that $h=0$. This means that the system $\{ {\tilde{e}}_{i} { \} }_{i \in \nat }$ is the $\ilsk{\cdot }{\cdot }{U}$-orthonormal basis in $U$.

\bigskip  \noindent
To prove assertion (b) let us fix $u \in U$. By (\ref{E:lambda_norm_U}), (\ref{E:eigen_L}) and (\ref{E:op_L_ilsk}), we have
\begin{equation} \label{E:P_n|U_ilsk}
   \ilsk{u}{{e}_{i}}{H} {e}_{i}
 = \Ilsk{u}{\frac{{\lambda }_{i}{e}_{i}}{\norm{{e}_{i}}{U}{}}}{H} \frac{{e}_{i}}{\norm{{e}_{i}}{U}{}}
 = \Ilsk{u}{L\frac{{e}_{i}}{\norm{{e}_{i}}{U}{}}}{H} \frac{{e}_{i}}{\norm{{e}_{i}}{U}{}}
 = \ilsk{u}{{\tilde{e}}_{i}}{U} {\tilde{e}}_{i} , \quad i \in \nat.
\end{equation}
By (\ref{E:P_n}) and (\ref{E:P_n|U_ilsk})
$$
  \Pn u = \sum_{i=1}^{n}  \ilsk{u}{{e}_{i}}{H} {e}_{i}
    = \sum_{i=1}^{n} \ilsk{u}{{\tilde{e}}_{i}}{U} {\tilde{e}}_{i},
 \qquad n \in \nat .
$$
Since  $\{ {\tilde{e}}_{i} {\} }_{i \in \nat }$ is the $\ilsk{\cdot }{\cdot }{U}$-orthonormal basis in $U$, the restriction of $\Pn $ to the space $U$ is the $\ilsk{\cdot }{\cdot }{U}$-orthogonal projection onto $span \{ {e}_{1},...,{e}_{n} \} $.

\bigskip  \noindent
Assertion (c) follows immediately from (b) and the continuity of the embeddings
$$
     U \hookrightarrow {V}_{s} \hookrightarrow V .
$$
This completes the proof of the Lemma.
\end{proof}

\bigskip
\section{Compactness results. }  \label{S:Compactness}

\subsection{Deterministic compactness criteria}

\noindent
Let us choose $s > \frac{d}{2} +1 $. We have the following sequence of Hilbert spaces
\begin{equation}
  U \underset{{\iota }_{s}}{\hookrightarrow } {V}_{s}
  \hookrightarrow V \hookrightarrow H \cong H^{\prime } \hookrightarrow V^{\prime } \hookrightarrow {V}_{s}^{\prime }
  \underset{{\iota }_{s}^{\prime }}{\hookrightarrow } U^{\prime } ,
\end{equation}
with the embedding ${\iota }_{s}$ being compact.
In particular,
\begin{equation}
     H \cong H^{\prime } \underset{{\iota }^{\prime }}{\hookrightarrow } {U}^{\prime }
\end{equation}
and ${\iota }^{\prime }$ is compact as well.
Let $\bigl( {\ocal }_{R} {\bigr) }_{R \in \nat } $ be a sequence of bounded open subsets of $\ocal $ with
regular boundaries $\partial {\ocal }_{R}$  such that
${\ocal }_{R} \subset {\ocal }_{R+1}$ and $\bigcup_{R=1}^{\infty } {\ocal }_{R} = \ocal $.
We will consider the following spaces of restrictions of functions defined on $\ocal $ to subsets ${\ocal }_{R}$, i.e.
\begin{equation}  \label{E:HR_VR}
 {H}_{{\ocal }_{R}} := \{ {u}_{|{\ocal }_{R}} ; \, \, \,  u \in H  \}
 \qquad {V}_{{\ocal }_{R}} := \{ {v}_{|{\ocal }_{R}} ; \, \, \,  v \in V  \}
\end{equation}
with appropriate scalar products and norms, i.e.
\begin{eqnarray*}
 & & \ilsk{u}{v}{{H}_{{\ocal }_{R}}} := \int_{{\ocal }_{R}} u v \, dx , \qquad  u,v \in {H}_{{\ocal }_{R}}, \\
& &\ilsk{u}{v}{{V}_{{\ocal }_{R}}} :=\int_{{\ocal }_{R}}uv\, dx+\int_{{\ocal }_{R}}\nabla u\nabla v \, dx,
 \qquad  u,v \in {V}_{{\ocal }_{R}}
\end{eqnarray*}
and $|u{|}_{{H}_{{\ocal }_{R}}}^{2}:=\ilsk{u}{u}{{H}_{{\ocal }_{R}}}$ for
$u \in {H}_{{\ocal }_{R}}$ and $\norm{u}{{V}_{{\ocal }_{R}}}{2}:=\ilsk{u}{u}{{V}_{{\ocal }_{R}}}$ for
$u \in {V}_{{\ocal }_{R}}$.
The symbols  ${H}_{{\ocal }_{R}}^{\prime }$ and ${V}_{{\ocal }_{R}}^{\prime }$ will stand for the corresponding dual spaces.

\bigskip \noindent
Since the sets ${\ocal }_{R}$ are bounded,
\begin{equation} \label{E:comp_VR_HR}
\mbox{\it the embeddings ${V}_{{\ocal }_{R}} \hookrightarrow  {H}_{{\ocal }_{R}}$ are compact. \rm }
\end{equation}

\bigskip  \noindent
Let us consider the following three functional spaces, analogous to those considered in \cite{Mikulevicius+Rozovskii_2005}:
\begin{eqnarray}
\ccal ([0,T],U^{\prime })&:=& \mbox{ the space of  continuous functions } u:[0,T] \to U^{\prime }
                          \mbox{ with the topology } {\tcal }_{1} \nonumber \\
                &         & \mbox{ induced by the norm }
                          \norm{u}{\ccal ([0,T],U^{\prime })}{}:=\sup_{t\in[0,T]} |u(t){|}_{U^{\prime }}, \nonumber \\
{L}_{w}^{2}(0,T;V)&:=& \mbox{ the space } {L}^{2} (0,T;V) \mbox{ with the weak topology }
                      {\tcal}_{2},  \nonumber  \\
{L}^{2}(0,T;{H}_{loc})&:=& \mbox{ the space of measurable functions } u:[0,T] \to H
                          \mbox{ such that for all } R \in \nat
                       \nonumber \\
  &   & \, \, {q}_{T,R}^{}(u):=\norm{u}{{L}^{2}(0,T;{H}_{{\ocal }_{R}})}{} :=
     \biggl( \int_{0}^{T}\int_{{\ocal }_{R}}|u(t,x){|}^{2}\, dxdt {\biggr) }^{\frac{1}{2}} <\infty  ,
      \label{E:seminorms} \\
  &   &  \mbox{ with the topology $ {\tcal }_{3} $ generated by the seminorms }
         ({q}_{T,R}{)}_{R\in \nat } . \nonumber
\end{eqnarray}

\bigskip  \noindent
The following lemma was inspired by the classical Dubinsky Theorem, see for instance the monograph \cite{Vishik_Fursikov_88},  and the compactness result due to Mikulevicius and Rozovskii  contained in \cite[Lemma 2.7]{Mikulevicius+Rozovskii_2005}.
The proof will be a certain modification of the proof of  \cite[Theorem IV.4.1]{Vishik_Fursikov_88},
see also \cite{Lions_69}.

\bigskip  \noindent
\begin{lemma} \rm \label{L:Dubinsky_ext} \it
Let
\begin{equation} \label{E:tilde_Z}
  \tilde{\zcal }:=\ccal ([0,T];U^{\prime })\cap {L}_{w}^{2}(0,T;V)\cap {L}^{2}(0,T;{H}_{loc})
\end{equation}
and let $\tilde{\tcal }$ be the supremum of the corresponding topologies. Then a set $\kcal \subset
\tilde{\zcal }$ is $\tilde{\tcal }$-relatively compact if  the following two conditions hold:
\begin{description}
\item[(i) ] $ \sup_{u\in \kcal } \int_{0}^{T} \norm{u(s)}{V}{2} \, ds < \infty  $,
    i.e. $\kcal $ is bounded in ${L}^{2}(0,T;V)$,
\item[(ii)] $ \lim{}_{\delta \to 0 } \sup_{u\in \kcal }
  \sup_{\overset{s,t \in[0,T]}{|t-s|\le \delta }} {|u(t)-u(s)|}_{U^{\prime }} =0 $.
\end{description}
\end{lemma}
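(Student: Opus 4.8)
The plan is to show that every sequence in $\kcal$ has a subsequence converging, to one common limit, in each of the three topologies $\tcal_1,\tcal_2,\tcal_3$ whose supremum is $\tilde\tcal$; this yields $\tilde\tcal$-relative compactness. The sequential reduction is legitimate because, by condition (i), $\kcal$ is contained in a ball $\mathcal{B}_C:=\{v:\int_0^T\norm{v(s)}{V}{2}\,ds\le C\}$ of the separable reflexive Hilbert space $L^2(0,T;V)$, every $\tilde\tcal$-limit point of $\kcal$ again lies in $\mathcal{B}_C$ by weak lower semicontinuity of the $L^2(0,T;V)$-norm, and on $\mathcal{B}_C$ the weak topology $\tcal_2$ is metrizable; hence the supremum of the three (individually metrizable) topologies is metrizable there. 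Fix $(u_n)\subset\kcal$. Condition (i) makes $\kcal$ bounded in the reflexive $L^2(0,T;V)$, so after passing to a subsequence we may assume $u_n\rightharpoonup u$ weakly in $L^2(0,T;V)$, with $u\in\mathcal{B}_C$; this already settles convergence in $\tcal_2$.

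Next I would obtain relative compactness in $\ccal([0,T];U^{\prime})$ by the vector-valued Arzel\`a--Ascoli theorem, which requires (a) equicontinuity of $\kcal$ and (b) relative compactness of $\{u(t):u\in\kcal\}$ in $U^{\prime}$ for each fixed $t$. Property (a) is precisely condition (ii). For (b) the device is time-averaging: for $h>0$ put $\bar u_h(t):=\frac1h\int_t^{t+h}u(\tau)\,d\tau$ (a one-sided average near $t=T$). Since $V\hookrightarrow U^{\prime}$ is continuous the average may be computed in $U^{\prime}$, while condition (i) and Cauchy--Schwarz give $\norm{\bar u_h(t)}{V}{}\le h^{-1/2}C^{1/2}$; thus for fixed $h$ the family $\{\bar u_h(t):u\in\kcal\}$ is bounded in $V$ and hence, the embedding $V\hookrightarrow U^{\prime}$ being \emph{compact} (the dual $\iota^{\prime}$ of the compact map $\iota$ is compact), relatively compact in $U^{\prime}$. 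On the other hand $|u(t)-\bar u_h(t)|_{U^{\prime}}\le\sup_{|\tau-t|\le h}|u(t)-u(\tau)|_{U^{\prime}}\to0$ as $h\to0$ uniformly in $u\in\kcal$ by condition (ii). So $\{u(t):u\in\kcal\}$ is totally bounded, hence relatively compact, in the complete space $U^{\prime}$. Arzel\`a--Ascoli then makes $\kcal$ relatively compact in $\ccal([0,T];U^{\prime})$; passing to a further subsequence, $u_n\to u$ in $\tcal_1$, the $\tcal_1$- and $\tcal_2$-limits coinciding because weak convergence in $L^2(0,T;V)$ forces weak convergence in $L^2(0,T;U^{\prime})$, where the strong $\ccal([0,T];U^{\prime})$-limit also sits. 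In particular $u_n\to u$ in $L^2(0,T;U^{\prime})$.

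The heart of the matter is convergence in $L^2(0,T;{H}_{loc})$, i.e. in $L^2(0,T;{H}_{{\ocal}_R})$ for every $R$, where the non-compactness of $V\hookrightarrow H$ is circumvented by localizing to ${\ocal}_R$. First I would prove the Ehrling-type inequality that for every $R\in\nat$ and $\eps>0$ there is $C_{\eps,R}$ with
\[ |u|_{{H}_{{\ocal}_R}}\le\eps\norm{u}{V}{}+C_{\eps,R}\,|u|_{U^{\prime}},\qquad u\in V. \]
This follows by the usual contradiction argument: a normalized sequence violating it would be bounded in $V$, hence converge strongly in ${H}_{{\ocal}_R}$ to a nonzero limit (the embedding ${V}_{{\ocal}_R}\hookrightarrow{H}_{{\ocal}_R}$ is compact, see (\ref{E:comp_VR_HR})), while converging to $0$ in $U^{\prime}$ (compact $V\hookrightarrow U^{\prime}$); since $H\hookrightarrow U^{\prime}$ is injective these are incompatible. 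Applying the inequality to $u_n-u\in V$, squaring and integrating gives
\[ \int_0^T|u_n(t)-u(t)|_{{H}_{{\ocal}_R}}^2\,dt\le 2\eps^2\int_0^T\norm{u_n(t)-u(t)}{V}{2}\,dt+2C_{\eps,R}^2\int_0^T|u_n(t)-u(t)|_{U^{\prime}}^2\,dt. \]
The first integral is bounded uniformly in $n$ by condition (i) together with $u\in\mathcal{B}_C$, while the second tends to $0$ by the already-established $L^2(0,T;U^{\prime})$-convergence; letting $\eps\to0$ yields $u_n\to u$ in $L^2(0,T;{H}_{{\ocal}_R})$, hence in $\tcal_3$.

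Since the extracted subsequence converges to the same $u$ in $\tcal_1,\tcal_2,\tcal_3$, it converges in $\tilde\tcal$, which proves the $\tilde\tcal$-relative compactness of $\kcal$. I expect the main obstacle to be the third step: because $V\hookrightarrow H$ is not compact on the unbounded domain, one cannot argue compactness in $L^2(0,T;H)$ directly and must replace it by the local spaces ${H}_{{\ocal}_R}$ and interpolate, through the Ehrling inequality, between the uniform $V$-bound and the $U^{\prime}$-convergence; the averaging trick of the second step, which converts the integral bound (i) into the pointwise $U^{\prime}$-compactness demanded by Arzel\`a--Ascoli, is the other delicate point.
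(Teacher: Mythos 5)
Your proof is correct, and its overall architecture coincides with the paper's: reduction to sequential compactness via metrizability of the three topologies on the bounded set, weak compactness in $L^{2}_{w}(0,T;V)$ from (i), compactness in $\ccal([0,T];U^{\prime})$ from (ii), and finally the Ehrling/Lions interpolation $|u|_{H_{\ocal_R}}\le \eps\norm{u}{V}{}+C_{\eps,R}|u|_{U^{\prime}}$ to convert the uniform $V$-bound plus the already-established $L^{2}(0,T;U^{\prime})$-convergence into convergence in each $L^{2}(0,T;H_{\ocal_R})$. The one step where you genuinely diverge is the Arzel\`a--Ascoli argument. The paper does not establish pointwise relative compactness of $\{u(t):u\in\kcal\}$ at every $t$: it introduces the null set $I_{\infty}$ of times where $\norm{u_n(t)}{V}{}\to\infty$, extracts by a diagonal argument a subsequence converging in $U^{\prime}$ at a countable dense set of good times, and then uses equicontinuity to show this subsequence is Cauchy in $\ccal([0,T];U^{\prime})$ --- in effect reproving Arzel\`a--Ascoli with ``pointwise compact at a.e.\ $t$'' in place of ``at every $t$''. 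You instead restore genuine pointwise compactness via the time-average $\bar u_h(t)=h^{-1}\int_t^{t+h}u(\tau)\,d\tau$, which (i) keeps in a $V$-ball of radius $h^{-1/2}C^{1/2}$ (hence in a compact of $U^{\prime}$) and (ii) places uniformly close to $u(t)$ in $U^{\prime}$, so $\{u(t):u\in\kcal\}$ is totally bounded and the standard vector-valued Arzel\`a--Ascoli theorem applies off the shelf. Your route is somewhat cleaner (no exceptional set, no diagonal extraction, and it shows the whole family $\kcal$ is relatively compact in $\ccal([0,T];U^{\prime})$); the paper's avoids Bochner averaging. One small point to tighten: in your contradiction proof of the Ehrling inequality, the phrase ``since $H\hookrightarrow U^{\prime}$ is injective these are incompatible'' is not quite the right reason, because the strong limit lives in the local space $H_{\ocal_R}$ while the $U^{\prime}$-convergence is global; the clean finish is to pass to a weak limit in $V$, which is $0$ because the sequence tends to $0$ in $U^{\prime}$ and $V\hookrightarrow U^{\prime}$ is continuous and injective, and then use weak continuity of the restriction $V\to H_{\ocal_R}$ to force the strong $H_{\ocal_R}$-limit to vanish. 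The paper sidesteps this by citing the Lions lemma directly.
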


\bigskip
\begin{proof}
We can  assume that $\kcal $ is closed in $\tilde{\tcal }$.
Because of the assumption (i), the restriction to $\kcal $ of the weak topology
in  ${L}_{w}^{2}(0,T;V)$ is metrizable. Since the topology in ${L}^{2}(0,T;{H}_{loc})$ is defined by the countable family of seminorms (\ref{E:seminorms}), this space is also metrizable.
Thus the compactness of a subset of $\tilde{\zcal }$ is equivalent to its sequential compactness.
Let $(\un )$ be a sequence in $\kcal $. By the Banach-Alaoglu Theorem condition (i) yields that $\kcal $ is compact in $ {L}_{w}^{2}(0,T;V) $.

\bigskip  \noindent
Arguing analogously to the proof of the classical  Arzel\'{a}-Ascoli Theorem, we will prove that $(\un )$ is compact in  $ \ccal ([0,T]; U^{\prime })$.

\bigskip  \noindent
Let us consider the following  set
\begin{equation} \label{E:I_infty}
    {I}_{\infty }:=\{ t \in [0,T]: \, \, \lim_{n \to \infty }\norm{\un (t)}{V}{}=\infty \} .
\end{equation}
The set ${I}_{\infty }$ is Lebesgue measurable because
$$
   {I}_{\infty }= \bigcap_{n=1}^{\infty } \bigcup_{k=n}^{\infty } \bigcap_{l=k}^{\infty }
    \{ \norm{ {u}_{l} (t)}{V}{2} \ge n  \} .
$$
Moreover, its measure is equal to zero.
Indeed, let us notice that otherwise
$$
  \int_{0}^{T} \norm{\un (t)}{V}{2} \, dt \ge \int_{{I}_{\infty }} \norm{\un (t)}{V}{2} \, dt
  \ge n \, \mbox{\rm{Leb}} ({I}_{\infty })
  \to \infty  \quad \mbox{as } n \to \infty ,
$$
which contradicts assumption (i).

\bigskip \noindent
By (\ref{E:I_infty}), for every $t \in [0,T] \setminus {I}_{\infty }$, the sequence $(\un (t){)}_{n \in \nat }$ contains a subsequence bounded in $V$. Furthermore, since the embedding $V \hookrightarrow U^{\prime } $ is compact, this subsequence contains a subsequence convergent in $U^{\prime }$.

\bigskip  \noindent
Let $\{ {t}_{i} {\} }_{i \in \nat } \subset [0,T] \setminus {I}_{\infty }$ be a dense subset of $[0,T]$.
Using the diagonal method we can choose a subsequence, still denoted by $(\un {)}_{n \in \nat }$
such that
\begin{equation}
     (\un ({t}_{i}){)}_{n \in \nat } \mbox{ is convergent in $U^{\prime }$ for each $i \in \nat $}.
\end{equation}
We will prove that the sequence $(\un )$ is Cauchy in $\ccal ([0,T];U^{\prime })$. To this end let us fix $\eps >0$.
By (ii) there exists $\delta > 0$ such that
$$
\sup_{v \in \kcal } \sup_{\overset{s,t\in [0,T],}{ |s-t|\le \delta }} |v(t)-v(s){|}_{U^{\prime }} < \frac{\eps }{3} .
$$
Let us fix $t \in [0,T]$. There exists $i \in \nat $ such that $|t-{t}_{i}| \le \delta $.
Then for sufficiently large $m,n \in \nat $ we have the following estimates
$$
   |{u}_{n}(t)-{u}_{m}(t) {|}_{U^{\prime }}
  \le   |{u}_{n}(t)-{u}_{n}({t}_{i}) {|}_{U^{\prime }}
      + |{u}_{n}({t}_{i})-{u}_{m}({t}_{i}) {|}_{U^{\prime }}
       + |{u}_{m}({t}_{i})-{u}_{m}(t) {|}_{U^{\prime }} \le \eps .
$$
Since $t \in [0,T]$ was chosen in an arbitrary way we infer that
$$
   \sup_{t\in [0,T]} |{u}_{n}(t)-{u}_{m}(t) {|}_{U^{\prime }} \le \eps
$$
which means that the sequence $({u}_{n})$ is Cauchy in $\ccal ([0,T];U^{\prime })$.

\bigskip \noindent
Therefore there exists a subsequence $(\unk ) \subset (\un )$ and
$u \in {L}_{}^{2}(0,T;V) \cap \ccal ([0,T]; U^{\prime }) $ such that
$$
  \unk \to u \quad \mbox{in} \quad {L}_{w}^{2}(0,T;V) \cap \ccal ([0,T]; U^{\prime })
\quad \mbox{as } \quad \kinf .
$$
In particular, for all $p \in [1,\infty )$
$$
  \unk \to u \quad \mbox{in} \quad {L}^{p}(0,T;U^{\prime })
\quad \mbox{as } \quad \kinf .
$$
\noindent
We claim that
\begin{equation} \label{E:unk_u_L^2_H_loc}
  \unk \to u \quad \mbox{in} \quad  {L}^{2}(0,T; {H}_{loc})
\quad \mbox{as } \quad \kinf .
\end{equation}
In order to prove (\ref{E:unk_u_L^2_H_loc}) let us fix $R >0$.
Since, by (\ref{E:comp_VR_HR}) the embedding ${V}_{{\ocal }_{R}} \hookrightarrow {H}_{{\ocal }_{R}}$ is compact and the embeddings ${H}_{{\ocal }_{R}} \hookrightarrow H^{\prime } \hookrightarrow U^{\prime }$ are continuous,  by the Lions Lemma, \cite{Lions_69}, for every $\eps >0 $ there exists a constant $C = {C}_{\eps , R}>0 $ such that
$$
  {|u|}_{{H}_{{\ocal }_{R}}}^{2} \le \eps \norm{u}{{V}_{{\ocal }_{R}}}{2}  + C {|u|}_{U^{\prime }}^{2} ,
\qquad u \in V .
$$
Thus for almost all $s \in [0,T]$
$$
  {|\unk (s) - u(s )|}_{{H}_{{\ocal }_{R}}}^{2} \le \eps \norm{\unk (s) - u(s)}{{V}_{{\ocal }_{R}}}{2}
 + C {|\unk (s) - u(s)|}_{U^{\prime }}^{2} ,
\qquad k \in \nat ,
$$
and so
$$
  {\| \unk  - u \| }_{{L}^{2}(0,T;{H}_{{\ocal }_{R}})}^{2}
  \le \eps \norm{\unk  - u }{{L}^{2}(0,T;{V}_{{\ocal }_{R}})}{2}
 + C {\| \unk  - u \| }_{{L}^{2}(0,T;U^{\prime })}^{2} ,
\qquad k \in \nat .
$$
Passing to the upper limit as $\kinf $ in the above inequality and using the estimate
$$
 \norm{\unk  - u }{{L}^{2}(0,T;{V}_{{\ocal }_{R}})}{2}
   \le 2 \bigl( \norm{\unk   }{{L}^{2}(0,T;{V}_{{\ocal }_{R}})}{2}
    + \norm{ u }{{L}^{2}(0,T;{V}_{{\ocal }_{R}})}{2} \bigr)
 \le 4 {c}_{2} ,
$$
where ${c}_{2}= \sup_{u \in \kcal } \norm{u}{{L}^{2}(0,T;V)}{2}$, we infer that
$$
  \limsup_{\kinf }  {\| \unk  - u \| }_{{L}^{2}(0,T;{H}_{{\ocal }_{R}})}^{2} \le 4 {c}_{2} \eps  ,
$$
By the arbitrariness of $\eps $,
$$
  \lim_{\kinf }  {\| \unk  - u \| }_{{L}^{2}(0,T;{H}_{{\ocal }_{R}})}^{2} =0 .
$$
The proof of the lemma is thus complete.
\end{proof}

\bigskip \noindent
Let ${H}_{w}$ denote the Hilbert space $H$ endowed with the weak topology.
Let
\begin{eqnarray}
& \ccal ([0,T];{H}_{w}) : = & \mbox{the space of weakly continuous functions }
                          u : [0,T] \to H \mbox{ endowed with }\nonumber \\
  &                     & \mbox{the weakest topology ${\tcal}_{4}$ such that for all
                          $h \in H $   the  mappings } \nonumber \\
  &                     &  \ccal ([0,T];{H}_{w}) \ni u \mapsto \ilsk{u(\cdot )}{h}{H}
                           \in \ccal  ([0,T];\rzecz )
                        \mbox{ are continuous. }  \label{E:C([0,T];H_w)}
\end{eqnarray}
In particular,
$\un \to u $ in $\ccal ([0,T];{H}_{w}) $ iff  for all $ h \in H $:
$$
\lim_{n\to \infty } \sup_{t \in [0,T]} \bigl| (\un (t)-u(t)|h{)}_{H} \bigr| =0 .
$$
Let us consider the ball
$$
    \ball := \{ x \in H : \, \, \, |x{|}_{H} \le r \} .
$$
Let ${\ball }_{w}$ denote the ball $\ball $ endowed with the weak topology.
It is well known that ${\ball }_{w}$  is metrizable, see \cite{Brezis}. Let $q$ denote the metric compatible with the weak topology on $\ball $. Let us consider the following subspace of the space
$\ccal ([0,T];{H}_{w})$
\begin{equation} \label{E:C([0,T];B_w)}
\ccal ([0,T];{\ball }_{w})  =  \{ u \in \ccal ([0,T];{H}_{w}): \, \,
                        \sup_{t \in [0,T]} |u(t){|}_{H} \le r  \} .
\end{equation}
The space $\ccal ([0,T];{\ball }_{w})$ is metrizable with
\begin{equation} \label{E:C([0,T];B_w)_metric}
     \varrho (u,v) = \sup_{t\in [0,T]} q(u(t),v(t)).
\end{equation}
Since by the Banach-Alaoglu Theorem  ${\ball }_{w}$ is  compact,
$(\ccal ([0,T];{\ball }_{w}),\varrho )$ is a complete metric space.
The following lemma says that any  sequence $(\un ) \subset \ccal ([0,T];\ball )$  convergent in $\ccal ([0,T];U^{\prime })$ is also convergent in the space $\ccal ([0,T];{\ball }_{w})$.

\begin{lemma} \label{L:C(0,T,{H}_{w})_conv} \rm (see Lemma 2.1 in \cite{Brz+Motyl_2010}) \it
Let ${u}_{n}:[0,T] \to H $, $n \in \nat $ be functions such that
\begin{description}
\item[(i)] $\sup_{n \in \nat } \sup_{s \in [0,T]} {|\un (s)|}_{H} \le r  $,
\item[(ii)] $\un \to u $ in $\ccal ([0,T];U^{\prime })$.
\end{description}
Then  $u, \un \in \ccal ([0,T];{\ball }_{w}) $ and $\un \to u$ in
$\ccal ([0,T];{\ball }_{w})$ as $n \to \infty $.
\end{lemma}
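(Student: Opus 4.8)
The plan is to exploit the fact that, on the ball $\ball$, the weak topology of $H$ coincides with the norm topology inherited from $U^{\prime }$, so that the weak ball ${\ball }_{w}$, the metric space $({\ball }_{w},q)$, and $(\ball ,{|\cdot |}_{U^{\prime }})$ all carry one and the same compact metrizable topology. Once this is established, uniform continuity of the identity map between these two compact metrics will transfer the uniform $U^{\prime }$-convergence of hypothesis (ii) into the uniform $q$-convergence required for convergence in $\varrho $.

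First I would record the elementary fact that if $({x}_{n})\subset \ball $ and ${x}_{n}\to x$ in $U^{\prime }$, then ${x}_{n}$ converges weakly to $x$ in $H$. Indeed, $({x}_{n})$ is bounded in the Hilbert space $H$, so every subsequence has a further subsequence converging weakly in $H$ to some $y$; since ${\iota }^{\prime }:H\to U^{\prime }$ is compact, that subsequence also converges to $y$ in $U^{\prime }$, whence $y=x$ by uniqueness of the $U^{\prime }$-limit together with the injectivity of $H\hookrightarrow U^{\prime }$. As every weakly convergent subsequence has limit $x$, the whole sequence converges weakly to $x$. Combined with the compactness of ${\iota }^{\prime }$ (which sends $H$-weakly convergent sequences to $U^{\prime }$-norm convergent ones), this shows that on $\ball $ the weak-$H$ topology and the $U^{\prime }$-norm topology have exactly the same convergent sequences; since both are metrizable, they coincide. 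Moreover ${\ball }_{w}$ is weakly compact by Banach--Alaoglu, and ${\iota }^{\prime }$ restricted to ${\ball }_{w}$ is a continuous injection into the Hausdorff space $U^{\prime }$, hence a homeomorphism onto its image; thus $(\ball ,{|\cdot |}_{U^{\prime }})$ is compact and carries precisely the topology of $({\ball }_{w},q)$.

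Next I would deduce the membership assertions. Each ${u}_{n}$ is $U^{\prime }$-continuous and, by (i), takes values in $\ball $; by the coincidence of topologies just established it is therefore weakly continuous, i.e. ${u}_{n}\in \ccal ([0,T];{\ball }_{w})$. For the limit $u$, fix $t\in [0,T]$: by (ii) we have ${u}_{n}(t)\to u(t)$ in $U^{\prime }$, and since ${|{u}_{n}(t)|}_{H}\le r$ the preliminary fact gives that ${u}_{n}(t)$ converges weakly in $H$ to $u(t)$, so by weak lower semicontinuity of the norm ${|u(t)|}_{H}\le r$; hence $u$ takes values in $\ball $. Being a uniform $U^{\prime }$-limit of continuous functions, $u$ is $U^{\prime }$-continuous, hence weakly continuous, so $u\in \ccal ([0,T];{\ball }_{w})$.

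Finally I would prove convergence in the metric $\varrho $. The identity map from $(\ball ,{|\cdot |}_{U^{\prime }})$ onto $(\ball ,q)$ is a continuous bijection between compact metric spaces, hence uniformly continuous: given $\eps >0$ there is $\delta >0$ with $q(x,y)\le \eps $ whenever $x,y\in \ball $ and ${|x-y|}_{U^{\prime }}\le \delta $. By (ii) there is ${n}_{0}$ such that ${\sup }_{t\in [0,T]}{|{u}_{n}(t)-u(t)|}_{U^{\prime }}\le \delta $ for $n\ge {n}_{0}$, and since all values lie in $\ball $ this yields $\varrho ({u}_{n},u)={\sup }_{t\in [0,T]}q({u}_{n}(t),u(t))\le \eps $ for $n\ge {n}_{0}$. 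Thus ${u}_{n}\to u$ in $\ccal ([0,T];{\ball }_{w})$. The one delicate point, and the heart of the argument, is the identification of the weak-$H$ and $U^{\prime }$-topologies on $\ball $ and the ensuing uniform continuity of the identity between the two compact metrics, which is exactly what converts the uniform $U^{\prime }$-estimate into the uniform $q$-estimate.
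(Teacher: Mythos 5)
Your proof is correct. Note that the paper itself gives no argument here: it simply defers to Lemma 2.1 of \cite{Brzezniak_Motyl_2010}, whose proof is the standard density estimate — for $h\in H$ one picks $\psi \in U$ with $|h-\psi |_{H}$ small and writes
$|\ilsk{\un (t)-u(t)}{h}{H}| \le 2r\,|h-\psi {|}_{H} + |\un (t)-u(t){|}_{U^{\prime }}\norm{\psi }{U}{}$,
which gives the uniform convergence of $\ilsk{\un (\cdot )}{h}{H}$ for each fixed $h$ and hence convergence in the metric $\varrho$ built from a countable dense family. Your route is genuinely different and somewhat more structural: you first establish that on $\ball$ the weak topology of $H$ and the norm topology of $U^{\prime }$ coincide (via compactness and injectivity of the embedding $H\hookrightarrow U^{\prime }$ together with weak sequential compactness of $\ball$), so that $(\ball ,{|\cdot |}_{U^{\prime }})$ and $({\ball }_{w},q)$ are homeomorphic compact metric spaces, and then invoke uniform continuity of the identity between them to convert the uniform $U^{\prime }$-estimate of (ii) into a uniform $q$-estimate. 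All the ingredients you use (compactness of ${\iota }^{\prime }$, injectivity of $H\hookrightarrow U^{\prime }$, metrizability of ${\ball }_{w}$, weak lower semicontinuity of the $H$-norm to show $u$ takes values in $\ball$) are available in the paper, and each step is justified. What your approach buys is independence from the explicit form of the metric $q$ and a cleaner conceptual statement (the two topologies agree on the ball); what the density argument buys is that it is entirely elementary and does not need the compactness of the embedding $U\hookrightarrow H$, only its density — but in the present setting both hypotheses hold, so either proof is valid.
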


\bigskip \noindent
The proof of Lemma \ref{L:C(0,T,{H}_{w})_conv} is the same as the proof of Lemma 2.1 in \cite{Brz+Motyl_2010}.

\bigskip  \noindent
The following lemma is essentially due to Mikulevicius and Rozovskii, see \cite{Mikulevicius+Rozovskii_2005}.
We prove a version of it which we will use in the forthcoming tightness criterion for the set of measures induced on the space $\zcal $, see Corollary \ref{C:tigthness_criterion}. The main difference in comparison to \cite{Mikulevicius+Rozovskii_2005} is that instead of the whole space $\rd $ we consider any open subset $\ocal $ and instead of the Fr\'{e}chet space ${H}^{-{k}_{0}}_{loc}(\rd )$ we take the  Hilbert space $U^{\prime }$.

\bigskip  \noindent
\begin{lemma} \rm \label{L:comp}
(see Lemma 2.7 in \cite{Mikulevicius+Rozovskii_2005}) \it
Let
\begin{equation} \label{E:Z}
  \zcal :=\ccal ([0,T];U^{\prime })\cap {L}_{w}^{2}(0,T;V)\cap {L}^{2}(0,T;{H}_{loc}) \cap \ccal ([0,T];{H}_{w})
\end{equation}
and let $\tcal $ be  the supremum of the corresponding topologies. Then a set $\kcal \subset \zcal $ is $\tcal $-relatively compact if  the following three conditions hold:
\begin{description}
\item[(a)] $ \sup_{u\in \kcal } \sup_{s \in[0,T]} {|u(s)|}_{H}^{} < \infty  $,
\item[(b)] $ \sup_{u\in \kcal } \int_{0}^{T} \norm{u(s)}{}{2} \, ds < \infty  $,
\item[(c)] $ \lim{}_{\delta \to 0 } \sup_{u\in \kcal }
  \sup_{\overset{s,t \in[0,T]}{|t-s|\le \delta }} {|u(t)-u(s)|}_{U^{\prime }} =0 $.
\end{description}
\end{lemma}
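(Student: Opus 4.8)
The plan is to bootstrap from the Dubinsky-type criterion already established in Lemma~\ref{L:Dubinsky_ext}, adding the fourth component $\ccal([0,T];{H}_{w})$ by means of Lemma~\ref{L:C(0,T,{H}_{w})_conv}. First I would verify that conditions (a), (b), (c) imply the hypotheses (i), (ii) of Lemma~\ref{L:Dubinsky_ext}. Indeed, by the definition (\ref{E:norm_V}) of the norm in $V$ we have $\norm{u(s)}{V}{2}={|u(s)|}_{H}^{2}+\norm{u(s)}{}{2}$, so integrating over $[0,T]$ and using (a) and (b) gives
$$
\sup_{u\in\kcal}\int_{0}^{T}\norm{u(s)}{V}{2}\,ds
\le T\sup_{u\in\kcal}\sup_{s\in[0,T]}{|u(s)|}_{H}^{2}
+\sup_{u\in\kcal}\int_{0}^{T}\norm{u(s)}{}{2}\,ds<\infty,
$$
which is exactly (i), while (c) coincides verbatim with (ii). Hence Lemma~\ref{L:Dubinsky_ext} applies and $\kcal$ is relatively compact in $\tilde{\zcal}=\ccal([0,T];U^{\prime})\cap{L}_{w}^{2}(0,T;V)\cap{L}^{2}(0,T;{H}_{loc})$.

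Next I would reduce relative compactness to relative sequential compactness by a metrizability argument. Set $r:=\sup_{u\in\kcal}\sup_{s\in[0,T]}{|u(s)|}_{H}<\infty$ from (a) and let $\ball$ be the corresponding ball. On the set of functions bounded by $r$ in the $\sup_{s}{|\cdot|}_{H}$ sense, each of the four topologies defining $\zcal$ is metrizable: the topology of $\ccal([0,T];U^{\prime})$ is induced by a norm; the restriction of the weak topology of ${L}^{2}(0,T;V)$ to the $L^{2}$-bounded set $\kcal$ is metrizable (by separability and the Banach--Alaoglu theorem, exactly as noted in the proof of Lemma~\ref{L:Dubinsky_ext}); the Fr\'echet topology ${\tcal}_{3}$ of ${L}^{2}(0,T;{H}_{loc})$ is generated by the countable family of seminorms (\ref{E:seminorms}); and $\ccal([0,T];{\ball}_{w})$ is metrized by $\varrho$ from (\ref{E:C([0,T];B_w)_metric}). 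Since the supremum of finitely many metrizable topologies is metrizable, the closure of $\kcal$ in $\zcal$ carries a metrizable topology, and hence it suffices to show that every sequence in $\kcal$ has a subsequence converging in $\zcal$.

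Finally I would carry out the sequential argument. Given $(\un)\subset\kcal$, the first step produces a subsequence $(\unk)$ and a limit $u\in{L}^{2}(0,T;V)\cap\ccal([0,T];U^{\prime})$ with $\unk\to u$ in $\tilde{\zcal}$; in particular $\unk\to u$ in $\ccal([0,T];U^{\prime})$ and in ${L}^{2}(0,T;{H}_{loc})$ and in ${L}_{w}^{2}(0,T;V)$. By (a) we have $\sup_{k}\sup_{s\in[0,T]}{|\unk(s)|}_{H}\le r$, so Lemma~\ref{L:C(0,T,{H}_{w})_conv} yields $u,\unk\in\ccal([0,T];{\ball}_{w})$ together with $\unk\to u$ in $\ccal([0,T];{\ball}_{w})$, hence in $\ccal([0,T];{H}_{w})$. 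Combining, $\unk\to u$ in all four topologies, i.e. in $\zcal$, which establishes relative sequential compactness and therefore, by the previous paragraph, relative compactness.

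I expect the main subtlety to be the metrizability reduction: one must be sure that relative compactness of a subset of the genuinely non-metric space $\zcal$ really does reduce to sequential compactness, and that the limit obtained in $\ccal([0,T];U^{\prime})$ is \emph{simultaneously} the limit in the weak-$H$ topology, so that the four component limits are consistent. This compatibility is precisely the content of Lemma~\ref{L:C(0,T,{H}_{w})_conv}, which is what makes the passage from $\tilde{\zcal}$ to $\zcal$ work rather than producing two unrelated limit objects.
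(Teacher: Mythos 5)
Your proposal is correct and follows essentially the same route as the paper: reduce (a),(b) to boundedness in $L^{2}(0,T;V)$ via (\ref{E:norm_V}), invoke Lemma \ref{L:Dubinsky_ext} for compactness in $\tilde{\zcal}$ after the metrizability reduction on $\kcal$, and then upgrade convergence in $\ccal([0,T];U^{\prime})$ to convergence in $\ccal([0,T];{\ball}_{w})$ by Lemma \ref{L:C(0,T,{H}_{w})_conv}. Your write-up is somewhat more explicit about the metrizability of each of the four restricted topologies and about the consistency of the limits, but the decomposition and the key lemmas are identical to the paper's argument.
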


\bigskip
\begin{proof}
Let us notice that $\zcal = \tilde{\zcal }\cap \ccal ([O,T],{H}_{w})$, where $\tilde{\zcal }$ is defined by (\ref{E:tilde_Z}). Without loss of generality we can assume that $\kcal $ is a closed subset of $\zcal $.
Because of the assumption (a) we may consider the metric subspace
$\ccal ([0,T];{\ball }_{w}) \subset \ccal ([0,T];{H}_{w})$ defined by (\ref{E:C([0,T];B_w)})
and (\ref{E:C([0,T];B_w)_metric}). From assumptions (a), (b) and  definition (\ref{E:norm_V}) of the norm in $V$, it follows that
the set $\kcal $ is bounded in ${L}^{2}(0,T;V)$.
Therefore the restrictions to $\kcal $ of the four topologies considered in $\zcal $ are metrizable. Let $(\un )$ be a sequence in $\zcal $. By Lemma \ref{L:Dubinsky_ext} the boundedness of the set $\kcal $ in ${L}^{2}(0,T;V)$ and assumption (c) imply that $\kcal $ is compact in $\tilde{\zcal }$. Hence in particular, there exists a subsequence, still denoted by $(\un )$, convergent in $\ccal ([0,T];U^{\prime })$.  Thus by Lemma \ref{L:C(0,T,{H}_{w})_conv} the sequence $(\un )$ is convergent in $\ccal ([0,T]; {\ball }_{w})$ as well.
The proof of the lemma is thus complete.
\end{proof}

\subsection{Tightness criterion}

\noindent
Let $(\smath ,\varrho )$ be a separable and complete metric space.

\begin{definition}  \rm
Let $u \in \ccal ([0,T],\smath )$.
The modulus of continuity of $u$ on $[0,T]$ is defined by
$$
     \modulus (u,\delta ):= \sup_{s,t \in [0,T] , |t-s|\le \delta } \varrho (u(t),u(s)),
 \qquad \delta >0 .
$$
\end{definition}

\noindent
Let $(\Omega , \fcal ,\p , )$ be a probability space with filtration $\mathbb{F}:=({\fcal }_{t}{)}_{t \in [0,T]}$
satisfying the usual conditions, see \cite{Metivier_82},
and let $(\Xn {)}_{n \in \nat }$ be a sequence of continuous $\mathbb{F}$-adapted  $\smath $-valued processes.

\begin{definition}  \rm
We say that the sequence $(\Xn )$ of $\smath $-valued random variables
 satisfies  condition [$\mathbf{\tilde{T}}$] iff $  \,  \forall \, \eps >0 \quad \forall \, \eta >0 \quad \exists \, \delta  >0 $:
\begin{equation} \label{E:cond_modulus}
    \sup_{n \in \nat } \, \p \bigl\{  \modulus (\Xn ,\delta )  > \eta \bigr\}  \le \eps .
\end{equation}
\end{definition}

\begin{lemma}  \label{L:modulus_conv}
Assume that $(\Xn )$ satisfies  condition $\mathbf{[\tilde{T}]}$.
Let ${\p }_{n}$ be the  law of $\Xn $ on $\ccal ([0,T], \smath )$, $n \in \nat $.
Then for every $\eps >0 $ there exists a subset
$ {A}_{\eps } \subset \ccal ([0,T], \smath ) $
such that
$$ \sup_{n\in \nat } {\p }_{n} ({A}_{\eps }) \ge 1 - \eps $$
and
\begin{equation} \label{E:modulus_conv}
   \lim_{\delta \to 0 }  \sup_{u \in {A}_{\eps } }  \modulus (u,\delta ) =0.
\end{equation}
\end{lemma}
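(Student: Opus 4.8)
The plan is to construct $A_\eps$ explicitly as a countable intersection of equicontinuity level sets dictated by condition $[\tilde{T}]$, and then to verify the two required properties by a union bound and by the monotonicity of the modulus in $\delta$, respectively. This is just the probabilistic counterpart of the Arzel\`a--Ascoli description of uniformly equicontinuous families. Concretely, for each $k \in \nat$ I would apply condition $[\tilde{T}]$ with the tolerance $\eta = 1/k$ and the error level $\eps 2^{-k}$, obtaining a number $\delta_k > 0$ with
$$
\sup_{n \in \nat} \p \bigl\{ \modulus (\Xn, \delta_k) > 1/k \bigr\} \le \eps 2^{-k},
$$
and then set
$$
A_\eps := \bigl\{ u \in \ccal([0,T],\smath) : \modulus (u, \delta_k) \le 1/k \ \mbox{ for all } k \in \nat \bigr\}.
$$

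Before estimating probabilities I would check that $A_\eps$ is Borel, which is needed for $\p_n(A_\eps)$ to be meaningful. The key point is that, for fixed $\delta$, the functional $u \mapsto \modulus(u,\delta)$ is Lipschitz on $\ccal([0,T],\smath)$: the triangle inequality for $\varrho$ gives $|\modulus(u,\delta) - \modulus(v,\delta)| \le 2 \sup_{t \in [0,T]} \varrho(u(t),v(t))$. Hence each set $\{ \modulus(\cdot, \delta_k) \le 1/k \}$ is closed, and $A_\eps$, being their intersection, is closed and in particular Borel.

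For the probability estimate I would pass to complements and use countable sub-additivity. Since $\{\Xn \notin A_\eps\} = \bigcup_{k=1}^{\infty} \{\modulus(\Xn,\delta_k) > 1/k\}$, the choice of the $\delta_k$ yields
$$
\p_n(A_\eps^{c}) = \p\{\Xn \notin A_\eps\} \le \sum_{k=1}^{\infty} \p\{\modulus(\Xn,\delta_k) > 1/k\} \le \sum_{k=1}^{\infty} \eps 2^{-k} = \eps
$$
uniformly in $n$, so that in fact $\inf_{n \in \nat} \p_n(A_\eps) \ge 1 - \eps$, which in particular implies the displayed inequality (I prove this stronger form, as the stated $\sup_n$ bound would otherwise be nearly vacuous). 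For the equicontinuity property \eqref{E:modulus_conv} I would exploit that $\delta \mapsto \modulus(u,\delta)$ is non-decreasing: given $k$, for every $\delta \le \delta_k$ and every $u \in A_\eps$ one has $\modulus(u,\delta) \le \modulus(u,\delta_k) \le 1/k$, whence $\sup_{u\in A_\eps}\modulus(u,\delta) \le 1/k$. Since $\delta \mapsto \sup_{u\in A_\eps}\modulus(u,\delta)$ is non-decreasing and bounded by $1/k$ for all small $\delta$, for every $k$, its limit as $\delta \to 0$ equals $0$.

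The argument is essentially routine, and there is no genuinely hard step; the only points that require a little care are the measurability of $A_\eps$ (handled by the Lipschitz continuity of the modulus functional) and the bookkeeping of the geometric error budget $\eps 2^{-k}$, chosen precisely so that the union bound closes at exactly $\eps$.
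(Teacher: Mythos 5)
Your proof is correct and is the standard Arzel\`a--Ascoli-type argument (choose $\delta_k$ from condition $[\mathbf{\tilde{T}}]$ with tolerance $1/k$ and error budget $\eps 2^{-k}$, intersect the level sets, union-bound, use monotonicity of the modulus); the paper in fact states this lemma without proof, so there is nothing to diverge from. Your added care about Borel measurability of $A_\eps$ via the $2$-Lipschitz continuity of $u\mapsto \modulus(u,\delta)$, and your observation that one actually gets $\inf_{n}{\p}_{n}(A_\eps)\ge 1-\eps$ (the $\sup_n$ in the statement being evidently a slip), are both sound.
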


\bigskip  \noindent
Now, we recall the Aldous condition which is connected with condition $\mathbf{[\tilde{T}]}$,
see \cite{Metivier_88} and \cite{Aldous}. This condition allows to investigate the modulus of continuity
for the sequence of stochastic processes by means of stopped processes.

\begin{definition}  \rm
A sequence $({X}_{n}{)}_{n\in \nat }$  satisfies  condition \rm \bf [A] \rm
iff
 $ \,   \forall \, \eps >0 \quad \forall \, \eta >0 \quad \exists \, \delta >0
$ such that for every sequence $({{\tau}_{n} } {)}_{n \in \nat }$ of $\mathbb{F}$-stopping times with
${\tau }_{n}\le T$ one has
$$
    \sup_{n \in \nat} \, \sup_{0 \le \theta \le \delta }  \p \bigl\{
   \varrho \bigl( {X}_{n} ({\tau }_{n} +\theta ),{X}_{n} ( {\tau }_{n}  ) \bigr) \ge \eta \bigr\}  \le \eps .
$$
\end{definition}

\begin{lemma} \label{L:Aldous_equiv}
(See \cite{Metivier_88}, Th. 3.2 p. 29)
Conditions \rm [\bf A \rm ] \it and $\mathbf{[\tilde{T}]}$ are  equivalent.
\end{lemma}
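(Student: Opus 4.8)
The plan is to prove the two implications separately. The implication $\mathbf{[\tilde T]}\Rightarrow\mathbf{[A]}$ is elementary and in fact holds pathwise, while $\mathbf{[A]}\Rightarrow\mathbf{[\tilde T]}$ carries the real content and is where I expect the difficulty to lie.

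For $\mathbf{[\tilde T]}\Rightarrow\mathbf{[A]}$ I would fix $\eps,\eta>0$ and apply $\mathbf{[\tilde T]}$ with $\eta/2$ in place of $\eta$ to obtain $\delta>0$ with $\sup_{n}\p\{\modulus(\Xn,\delta)>\eta/2\}\le\eps$. For any sequence of $\mathbb{F}$-stopping times $\taun\le T$ and any \emph{deterministic} $\theta\in[0,\delta]$, the two instants $\taun$ and $(\taun+\theta)\wedge T$ differ by at most $\delta$; hence on the complement of $\{\modulus(\Xn,\delta)>\eta/2\}$ we get $\varrho\bigl(\Xn((\taun+\theta)\wedge T),\Xn(\taun)\bigr)\le \eta/2<\eta$. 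This gives the inclusion
$$
\bigl\{\varrho(\Xn(\taun+\theta),\Xn(\taun))\ge\eta\bigr\}\subseteq\bigl\{\modulus(\Xn,\delta)>\eta/2\bigr\},
$$
so the Aldous bound follows with the same $\delta$, uniformly in $n$, in the stopping times, and in $\theta\le\delta$. The point is that the modulus controls all pairs of instants simultaneously, so the measurability of the times plays no role here; the $\eta/2$ device merely absorbs the boundary between $\ge$ and $>$ and the truncation at $T$.

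For the converse $\mathbf{[\tilde T]}$ is the target, and I would first perform a deterministic reduction. Fix $\eps,\eta>0$, let $t_i=i\delta$ for $0\le i\le N$ with $N=\lceil T/\delta\rceil$, and observe that any $s,t\in[0,T]$ with $|s-t|\le\delta$ lie in at most two consecutive cells $[t_{i-1},t_i]\cup[t_i,t_{i+1}]$; a triangle inequality then yields
$$
\modulus(\Xn,\delta)\le 2\max_{0\le i<N}\ \sup_{t_i\le v\le t_{i+1}}\varrho\bigl(\Xn(v),\Xn(t_i)\bigr).
$$
The naive route of bounding each cell and summing fails, since there are $\sim 1/\delta$ cells while $\mathbf{[A]}$ supplies only a \emph{fixed} bound for a fixed $\delta$. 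To avoid the union bound I would instead introduce a \emph{single} grid-valued stopping time
$$
\taun:=\min\Bigl\{t_i:\ \sup_{t_i\le v\le t_{i+1}}\varrho(\Xn(v),\Xn(t_i))\ge\eta/2\Bigr\}\wedge T,
$$
which is an $\mathbb{F}$-stopping time because $\{\taun\le t_j\}$ is a finite union of events measurable with respect to $\mathcal{F}_{t_{i+1}}\subseteq\mathcal{F}_{t_j}$. The displayed bound gives $\{\modulus(\Xn,\delta)>\eta\}\subseteq\{\taun<T\}$, and on $\{\taun<T\}$ the oscillation from $\taun$ reaches $\eta/2$ within the window, so
$$
\{\modulus(\Xn,\delta)>\eta\}\subseteq\Bigl\{\sup_{0\le\theta\le\delta}\varrho\bigl(\Xn(\taun+\theta),\Xn(\taun)\bigr)\ge\eta/2\Bigr\}.
$$

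The main obstacle is now the conversion of this last event — in which the supremum over $\theta$ sits \emph{inside} the probability — into the Aldous form $\sup_{\theta\le\delta}\p\{\varrho(\Xn(\taun+\theta),\Xn(\taun))\ge\cdot\}$, where the supremum sits \emph{outside}. Here I would exploit path continuity by introducing the first-hitting time $\rho_n:=\inf\{\theta\in[0,\delta]:\varrho(\Xn(\taun+\theta),\Xn(\taun))\ge\eta/2\}$, so that $\sigma_n:=\taun+\rho_n$ is a stopping time with $\varrho(\Xn(\sigma_n),\Xn(\taun))=\eta/2$ on the relevant event. The delicate point is that the offset $\rho_n$ is \emph{random}, so $\mathbf{[A]}$ (which only allows deterministic $\theta$) does not apply directly; I would resolve this by applying $\mathbf{[A]}$ to the two stopping times $\taun$ and $\sigma_n$ separately and decomposing, via the triangle inequality through a deterministic-offset instant such as $\taun+\delta$,
$$
\tfrac{\eta}{2}=\varrho(\Xn(\sigma_n),\Xn(\taun))\le\varrho(\Xn(\sigma_n),\Xn(\taun+\delta))+\varrho(\Xn(\taun+\delta),\Xn(\taun)),
$$
so that at least one summand is $\ge\eta/4$; the first is a genuine deterministic-offset quantity to which $\mathbf{[A]}$ applies, and the second is controlled by applying $\mathbf{[A]}$ at the stopping time $\sigma_n$ together with continuity of the paths. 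Calibrating $\mathbf{[A]}$ at level $\eta/4$ and at a mass fraction of $\eps$ then yields $\sup_n\p\{\modulus(\Xn,\delta)>\eta\}\le\eps$ for a suitable $\delta$, which is precisely $\mathbf{[\tilde T]}$. I expect this sup-inside-versus-sup-outside passage, and the accompanying bookkeeping that keeps the constants and the truncation at $T$ consistent, to be the genuinely nontrivial step; it is the heart of Métivier's theorem and is exactly where continuity of the processes $\Xn$ is essential.
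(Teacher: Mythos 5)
The paper itself offers no proof of this lemma --- it is quoted directly from M\'etivier \cite{Metivier_88} --- so your attempt can only be judged on its own terms. Your first implication, $\mathbf{[\tilde{T}]}\Rightarrow\mathbf{[A]}$, is correct: the pathwise inclusion $\{\varrho(\Xn(\taun+\theta),\Xn(\taun))\ge\eta\}\subseteq\{\modulus(\Xn,\delta)>\eta/2\}$ does all the work, and you are right that measurability of $\taun$ plays no role there.

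The converse contains two genuine gaps. First, your grid-valued time $\taun=\min\{t_i:\sup_{t_i\le v\le t_{i+1}}\varrho(\Xn(v),\Xn(t_i))\ge\eta/2\}$ is \emph{not} an $\mathbb{F}$-stopping time: the event $\{\taun=t_j\}$ requires knowledge of the oscillation of $\Xn$ over the entire cell $[t_j,t_{j+1}]$ and is therefore only ${\fcal}_{t_{j+1}}$-measurable; your claimed inclusion ${\fcal}_{t_{i+1}}\subseteq{\fcal}_{t_j}$ fails precisely at $i=j$, so $\mathbf{[A]}$ cannot be invoked at $\taun$. Second, and more seriously, the random-offset problem is not eliminated by the triangle inequality through $\Xn(\taun+\delta)$. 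The summand $\varrho(\Xn(\taun+\delta),\Xn(\taun))$ is indeed a deterministic offset $\delta$ from $\taun$, but the other summand $\varrho(\Xn(\sigma_n),\Xn(\taun+\delta))$ is an increment over an interval of random length $\delta-\rho_n$ measured from the stopping time $\sigma_n$, so $\mathbf{[A]}$ does not apply to it either: you have only moved the random offset from one term to the other. The missing idea --- the heart of Aldous's and M\'etivier's argument --- is to \emph{average} condition $\mathbf{[A]}$ over the deterministic offset: from $\sup_{0\le\theta\le 2\delta}\p\{\varrho(\Xn(\tau+\theta),\Xn(\tau))\ge\eta\}\le\eps$ one deduces by Fubini that $\e\bigl[\mathrm{Leb}\{\theta\in[0,2\delta]:\varrho(\Xn(\tau+\theta),\Xn(\tau))\ge\eta\}\bigr]\le 2\delta\eps$, and it is this Lebesgue-measure estimate, applied along a chain of genuine hitting-time stopping times $\sigma_0=0$, $\sigma_{k+1}=\inf\{t\ge\sigma_k:\varrho(\Xn(t),\Xn(\sigma_k))\ge\eta\}\wedge T$, that converts statements with the supremum over $\theta$ inside the probability into statements with it outside. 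A minor further point: two instants at distance $\le\delta$ may straddle a grid point, so the correct constant in your modulus bound is $3$ rather than $2$; this is harmless but should be fixed.
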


\bigskip  \noindent
Using the deterministic compactness criterion formulated in Lemma  \ref{L:comp} we obtain the following corollary which we will use to prove tightness of the laws defined by the Galerkin approximations.
Let us first recall that $U$ is a Hilbert space such that
$$
    U \hookrightarrow  V \hookrightarrow  H
$$
and the embedding $U\hookrightarrow V$ is dense and  compact. Moreover, we consider the following space
$$
  \zcal :=\ccal ([0,T];U^{\prime })\cap {L}_{w}^{2}(0,T;V)\cap {L}^{2}(0,T;{H}_{loc}) \cap \ccal ([0,T];{H}_{w}) ,
$$
equipped with the topology $\tcal $, see (\ref{E:Z}).

\bigskip
\begin{cor} \bf (tightness criterion) \it \label{C:tigthness_criterion}
Let $(\Xn {)}_{n \in \nat }$ be a sequence of continuous $\mathbb{F}$-adapted $U^{\prime }$-valued processes
 such that
\begin{description}
\item[(a)] there exists a positive constant ${C}_{1}$ such that
$$
         \sup_{n\in \nat} \e \bigl[ \sup_{s \in [0,T]} | \Xn (s) {|}_{H}^{2}  \bigr]  \le {C}_{1} .
$$
\item[(b)] there exists a positive constant ${C}_{2}$ such that
$$
          \sup_{n\in \nat} \e \biggl[  \int_{0}^{T} \norm{\Xn (s)}{}{2} \, ds    \biggr]  \le {C}_{2} .
$$
\item[(c)]  $(\Xn {)}_{n \in \nat }$ satisfies the Aldous condition \bf [A] \rm in $U^{\prime }$.
\end{description}
Let ${\tilde{\p }}_{n}$ be the law of $\Xn $ on $\zcal $.
Then for every $\eps >0 $ there exists a compact subset ${K}_{\eps }$ of $\zcal $ such that
$$
  \sup_{n\in \nat} {\tilde{\p }}_{n}({K}_{\eps })\ge 1-\eps .
$$
\end{cor}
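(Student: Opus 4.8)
The plan is to verify the three hypotheses of the deterministic compactness criterion of Lemma~\ref{L:comp} on a suitable ``large'' set of trajectories, and then invoke the probabilistic bounds (a)--(c) together with Chebyshev's inequality and the equivalence of the Aldous condition with condition $\mathbf{[\tilde{T}]}$ (Lemma~\ref{L:Aldous_equiv}) to guarantee that this set carries mass at least $1-\eps$ uniformly in $n$. Concretely, I would fix $\eps>0$ and, using the finite energy bounds ${C}_{1},{C}_{2}$ from (a) and (b), choose constants ${R}_{1},{R}_{2}>0$ large enough that the sets
$$
   {B}_{1}:=\Bigl\{ u : \sup_{s\in[0,T]} {|u(s)|}_{H}^{2}\le {R}_{1}\Bigr\}, \qquad
   {B}_{2}:=\Bigl\{ u : \int_{0}^{T}\norm{u(s)}{}{2}\, ds \le {R}_{2}\Bigr\}
$$
each fail with probability at most $\eps/3$; this is where Chebyshev's inequality applied to (a) and (b) enters.

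Next I would handle the modulus-of-continuity condition (c), which is the heart of the argument. Since $(\Xn)$ satisfies the Aldous condition $\mathbf{[A]}$ in $U^{\prime }$, Lemma~\ref{L:Aldous_equiv} gives condition $\mathbf{[\tilde{T}]}$, and then Lemma~\ref{L:modulus_conv} produces, for the prescribed $\eps/3$, a set ${A}_{\eps }\subset \ccal([0,T];U^{\prime })$ with $\sup_n {\p }_n({A}_{\eps})\ge 1-\eps/3$ on which the modulus of continuity in $U^{\prime }$ tends to zero uniformly as $\delta\to 0$. In other words, the trajectories lying in ${A}_{\eps}$ satisfy exactly the uniform equicontinuity statement required in hypothesis (c) of Lemma~\ref{L:comp}. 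I would then set
$$
   {K}_{\eps }:= \overline{{B}_{1}\cap {B}_{2}\cap {A}_{\eps }}^{\,\tcal },
$$
the closure being taken in the topology $\tcal $ of $\zcal $.

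The set ${B}_{1}\cap {B}_{2}\cap {A}_{\eps}$ satisfies precisely the three hypotheses (a), (b), (c) of Lemma~\ref{L:comp}: the sup-norm bound in $H$ from ${B}_{1}$, the Dirichlet-norm bound from ${B}_{2}$, and the uniform vanishing of the $U^{\prime }$-modulus of continuity from ${A}_{\eps }$. Hence Lemma~\ref{L:comp} shows this set is $\tcal $-relatively compact, so ${K}_{\eps }$ is compact in $\zcal $. Finally, by the union bound the complement of ${B}_{1}\cap {B}_{2}\cap {A}_{\eps }$ has probability at most $3\cdot \tfrac{\eps}{3}=\eps$ under each ${\tilde{\p }}_n$, whence $\sup_n {\tilde{\p }}_n({K}_{\eps})\ge 1-\eps$, as required.

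The main obstacle, and the only genuinely delicate point, is the transfer of the Aldous condition into the deterministic equicontinuity hypothesis (c) of Lemma~\ref{L:comp}: one must be careful that Lemma~\ref{L:modulus_conv} is being applied to the laws on $\ccal([0,T];U^{\prime })$ and that the resulting set ${A}_{\eps }$ really controls the $U^{\prime }$-modulus uniformly over the trajectories, not merely in probability. A minor technical care is also needed to ensure measurability of the three constraint sets so that the probability estimates and the final union bound are legitimate; this is routine since each is defined by a countable supremum or an integral functional. Once these measurability and transfer issues are settled, the remainder is a direct assembly of the three estimates.
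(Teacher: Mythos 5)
Your proposal is correct and follows essentially the same route as the paper: Chebyshev applied to (a) and (b) to obtain the sets $B_{1}$ and $B_{2}$ each failing with probability at most $\eps/3$, Lemmas \ref{L:Aldous_equiv} and \ref{L:modulus_conv} to convert the Aldous condition into the deterministic modulus-of-continuity set $A_{\eps/3}$, and then the closure of $B_{1}\cap B_{2}\cap A_{\eps/3}$ in $\zcal$, compact by Lemma \ref{L:comp}. No gaps.
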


\begin{proof}
Let $\eps >0$. By the  Chebyshev inequality and (a), we infer that for any $n \in \nat $ and any $r>0$
$$
  \p \biggl( \sup_{s \in [0,T]} |\Xn (s){|}_{H}^{2} > r  \biggr)
  \le \frac{ \e \bigl[ \sup_{s \in [0,T]} |\Xn (s) {|}_{H}^{2} \bigr]}{r}
  \le \frac{{C}_{1}}{r}.
$$
Let ${R}_{1}$ be such that $\frac{{C}_{1}}{{R}_{1}} \le \frac{\eps }{3}$. Then
$$
  \sup_{n\in \nat } \p \biggl( \sup_{s \in [0,T]}|\Xn (s){|}_{H}^{2} > {R}_{1} \biggr) \le \frac{\eps }{3}.
$$
Let ${B}_{1}:=\bigl\{ u \in \zcal :\, \, \sup_{s \in [0,T]}|u(s) {|}_{H}^{2} \le {R}_{1} \bigr\} $.

\bigskip  \noindent
By the  Chebyshev inequality and  (b), we infer that for any $n \in \nat $ and any $r>0$
$$
 \p \bigl( \norm{\Xn }{{L}^{2}(0,T;V)}{} > r  \bigr)
  \le \frac{\e \bigl[ \norm{\Xn }{{L}^{2}(0,T;V)}{2} \bigr]  }{{r}^{2}}
  \le \frac{{C}_{2}}{{r}^{2}}.
$$
Let ${R}_{2}$ be such that $\frac{{C}_{2}}{{R}_{2}^{2}} \le \frac{\eps }{3}$. Then
$$
  \sup_{n\in \nat }  \p \bigl( \norm{\Xn }{{L}^{2}(0,T;V)}{} > {R}_{2}  \bigr) \le \frac{\eps }{3}.
$$
Let ${B}_{2} := \bigl\{ u \in \zcal : \, \, \norm{u }{{L}^{2}(0,T;V)}{} \le {R}_{2} \bigr\} $.

\bigskip  \noindent
By Lemmas \ref{L:Aldous_equiv} and \ref{L:modulus_conv} there exists a subset
${A}_{\frac{\eps }{3}} \subset \ccal ([0,T], U^{\prime }) $ such that
${\tilde{\p }}_{n} \bigl( {A}_{\frac{\eps }{3}}\bigr) \ge 1 - \frac{\eps }{3}$ and
$$
   \lim_{\delta \to 0 }  \sup_{u \in {A}_{\frac{\eps }{3}}}
\sup_{\underset{|t-s| \le \delta }{s,t \in [0,T]}}  |u(t) - u(s){|}_{U^{\prime }} =0 .
$$
It is sufficient to define ${K}_{\eps } $ as the closure  of the set
${B}_{1} \cap {B}_{2} \cap {A}_{\frac{\eps }{3}}$ in $\zcal $.
By Lemma  \ref{L:comp}, ${K}_{\eps }$ is compact in $\zcal $. The proof is thus complete.
\end{proof}

\bigskip \noindent
\bf Diggresion. \rm If we omit assumption (a) in Corollary \ref{C:tigthness_criterion}, then
using Lemma \ref{L:Dubinsky_ext}, we obtain the following tightness criterion
in the space $(\tilde{\zcal }, \tilde{\tcal } )$ defined by (\ref{E:tilde_Z}).

\bigskip
\begin{cor} \bf (tightness criterion in $\tilde{\zcal }$) \it \label{C:tigthness_criterion_tilde_Z}
Let $(\Xn {)}_{n \in \nat }$ be a sequence of continuous $\mathbb{F}$-adapted $U^{\prime }$-valued processes
 such that
\begin{description}
\item[(i) ] there exists a positive constant $C$ such that
$$
           \sup_{n \in \nat} \e \biggl[  \int_{0}^{T} \norm{\Xn (s)}{V}{2} \, ds    \biggr]  \le C .
$$
\item[(ii)]  $(\Xn {)}_{n \in \nat }$ satisfies the Aldous condition \bf [A] \rm in $U^{\prime }$.
\end{description}
Let ${\tilde{\p }}_{n}$ be the law of $\Xn $ on $\tilde{\zcal }$.
Then for every $\eps >0 $ there exists a compact subset ${K}_{\eps }$ of $\tilde{\zcal }$ such that
$$
  \sup_{n\in \nat} {\tilde{\p }}_{n}({K}_{\eps })\ge 1-\eps .
$$
\end{cor}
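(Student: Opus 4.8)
The plan is to mirror the proof of Corollary \ref{C:tigthness_criterion} almost verbatim, making the simplifications forced by replacing $\zcal$ with $\tilde{\zcal}$. Since $\tilde{\zcal}$, defined in (\ref{E:tilde_Z}), no longer contains the factor $\ccal([0,T];{H}_{w})$, the sole role of the now-missing hypothesis (a) of Corollary \ref{C:tigthness_criterion}---namely, to feed Lemma \ref{L:C(0,T,{H}_{w})_conv} and thereby control convergence in $\ccal([0,T];{H}_{w})$---has disappeared. Accordingly I would certify relative compactness by Lemma \ref{L:Dubinsky_ext} rather than by Lemma \ref{L:comp}, and I would need only a single ``ball'' in ${L}^{2}(0,T;V)$ in place of the two sets $B_1,B_2$ used there.

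First I would fix $\eps > 0$. Since $\int_0^T \norm{\Xn(s)}{V}{2}\,ds = \norm{\Xn}{{L}^{2}(0,T;V)}{2}$, the Chebyshev inequality and assumption (i) give $\sup_n \p\bigl(\norm{\Xn}{{L}^{2}(0,T;V)}{} > r\bigr) \le C/r^2$ for every $r>0$; choosing $R$ with $C/R^2 \le \eps/2$ yields $\sup_n \p\bigl(\norm{\Xn}{{L}^{2}(0,T;V)}{} > R\bigr) \le \eps/2$. I set $B := \{u \in \tilde{\zcal} : \norm{u}{{L}^{2}(0,T;V)}{} \le R\}$, so that ${\tilde{\p}}_n(B) \ge 1 - \eps/2$ for all $n$. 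Next, by Lemma \ref{L:Aldous_equiv} the Aldous condition (ii) is equivalent to condition $\mathbf{[\tilde{T}]}$, so Lemma \ref{L:modulus_conv}, applied with $\smath := U'$, produces a set $A_{\eps/2} \subset \ccal([0,T];U')$ with ${\tilde{\p}}_n(A_{\eps/2}) \ge 1 - \eps/2$ and $\lim_{\delta \to 0} \sup_{u \in A_{\eps/2}} \modulus(u,\delta) = 0$, the modulus being taken in the $U'$-norm.

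I would then define $K_\eps$ to be the closure in $\tilde{\zcal}$ of $B \cap A_{\eps/2}$ (an intersection that automatically sits inside $\tilde{\zcal}$, since $B \subset \tilde{\zcal} \subset \ccal([0,T];U')$). The two hypotheses of Lemma \ref{L:Dubinsky_ext} hold on $B \cap A_{\eps/2}$: condition (i) because every $u \in B$ satisfies $\int_0^T \norm{u(s)}{V}{2}\,ds \le R^2$, and condition (ii) because the uniform vanishing of the $U'$-modulus of continuity is precisely the defining property of $A_{\eps/2}$. Hence $B \cap A_{\eps/2}$ is $\tilde{\tcal}$-relatively compact and $K_\eps$ is compact in $\tilde{\zcal}$. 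The measure bound then follows from subadditivity: ${\tilde{\p}}_n(K_\eps) \ge {\tilde{\p}}_n(B \cap A_{\eps/2}) \ge 1 - {\tilde{\p}}_n(B^c) - {\tilde{\p}}_n(A_{\eps/2}^c) \ge 1 - \eps$, uniformly in $n$, which is the assertion.

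I do not anticipate a genuine obstacle, the argument being a streamlining of one already carried out; the only point needing care is the bookkeeping of which hypothesis feeds which condition of Lemma \ref{L:Dubinsky_ext}. In Corollary \ref{C:tigthness_criterion} the ${L}^{2}(0,T;V)$ bound had to be assembled from the separate controls (a) on $\sup_{s}|u(s)|_H$ and (b) on $\int_0^T \norm{u(s)}{}{2}\,ds$ via $\norm{u}{V}{2} = |u|_H^2 + \norm{u}{}{2}$; here assumption (i) delivers the full ${L}^{2}(0,T;V)$ bound in one stroke, and because $\tilde{\zcal}$ carries no weak-$H$ continuity factor, no control of $\sup_{s}|u(s)|_H$ is needed. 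This is exactly why hypothesis (a) may be dropped.
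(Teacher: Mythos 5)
Your proof is correct and follows exactly the route the paper intends: the paper presents this corollary as a "digression" obtained by dropping hypothesis (a) from Corollary \ref{C:tigthness_criterion} and invoking Lemma \ref{L:Dubinsky_ext} in place of Lemma \ref{L:comp}, which is precisely your argument (Chebyshev on the $L^{2}(0,T;V)$ norm for the ball $B$, Lemmas \ref{L:Aldous_equiv} and \ref{L:modulus_conv} for the equicontinuity set, and the closure of the intersection as $K_{\eps}$). Your concluding remark about why (a) becomes dispensable also matches the paper's rationale.
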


\bigskip
\subsection{The Skorokhod Theorem}

\bigskip \noindent
We will use the following Jakubowski's version of the Skorokhod Theorem in the form given by
Brze\'{z}niak and Ondrej\'{a}t  \cite{Brz+Ondrejat_2011}, see also \cite{Jakubowski_1998}.

\begin{theorem} \rm (Theorem A.1 in \cite{Brz+Ondrejat_2011}) \it
\label{T:Jakubowski_Skorokhod_Ondrejat}
Let $\xcal $ be a topological space such that there exists a sequence $\{ {f}_{m}\} $ of continuous functions ${f}_{m}:\xcal  \to \rzecz $ that separates points of $\xcal $. Let us denote by $\scal $ the $\sigma $-algebra generated by the maps $\{ {f}_{m}\} $.
Then
\begin{description}
\item[(j1) ] every compact subset of $\xcal $ is metrizable,
\item[(j2) ] if $({\mu }_{m})$ is a tight sequence of probability measures on $(\xcal ,\scal )$, then there exists a subsequence $({m}_{k})$, a probability space $(\Omega , \fcal ,\p )$ with $\xcal $-valued Borel measurable variables ${\xi }_{k}, \xi $ such that  ${\mu }_{{m}_{k}}$ is the law of ${\xi }_{k}$
and ${\xi }_{k}$ converges to $\xi $ almost surely on $\Omega $.
Moreover, the law of $\xi $ is a Radon measure.
\end{description}
\end{theorem}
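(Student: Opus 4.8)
The plan is to reduce the nonmetric space $\xcal$ to a metrizable one by means of the separating sequence $\{f_m\}$, transport the measures there, apply the classical theory, and pull everything back. Concretely, I would set $\mathbf{f}:=(f_m)_{m\in\nat}:\xcal\to\rzecz^{\nat}$, which is continuous for the product topology and injective because $\{f_m\}$ separates points; note that this separation also forces $\xcal$ to be Hausdorff. Since $\rzecz^{\nat}$ is metrizable, part (j1) follows at once: for a compact $K\subset\xcal$ the restriction $\mathbf{f}|_K$ is a continuous bijection onto a compact, hence closed, subset of $\rzecz^{\nat}$, so it carries closed subsets of $K$ to closed sets and is therefore a homeomorphism; thus $K$ is homeomorphic to a subspace of a metric space and is metrizable.

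For (j2) I would first transfer tightness. Choose, by tightness of $(\mu_m)$, compact sets $K_j\subset\xcal$ with $\sup_m\mu_m(\xcal\setminus K_j)\le 1/j$, and set $\nu_m:=\mathbf{f}_\ast\mu_m$ on the Borel $\sigma$-algebra of $\rzecz^{\nat}$, which coincides with $\scal$ transported since the coordinate projections are exactly the $f_m$. Because $\mathbf{f}$ is injective, $\nu_m(\mathbf{f}(K_j))=\mu_m(K_j)\ge 1-1/j$ and each $\mathbf{f}(K_j)$ is compact, so $(\nu_m)$ is tight on the Polish space $\rzecz^{\nat}$. By Prokhorov's theorem there is a subsequence with $\nu_{m_k}\rightharpoonup\nu$ weakly, and since the sets $\mathbf{f}(K_j)$ are closed, the portmanteau inequality gives $\nu\bigl(\bigcup_j\mathbf{f}(K_j)\bigr)=1$; in particular $\nu$ is concentrated on the $\sigma$-compact set $\Sigma:=\bigcup_j\mathbf{f}(K_j)\subset\mathbf{f}(\xcal)$ and is Radon.

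Next I would invoke the classical Skorokhod representation theorem on the Polish space $\rzecz^{\nat}$: there exist a probability space $(\Omega,\fcal,\p)$ and $\rzecz^{\nat}$-valued random variables $Y_k,Y$ with laws $\nu_{m_k},\nu$ such that $Y_k\to Y$ $\p$-almost surely. Since these laws are concentrated on $\Sigma$, almost surely all $Y_k$ and $Y$ lie in $\Sigma$. On $\Sigma$ the inverse $\mathbf{f}^{-1}$ is well defined and, being continuous on each compact piece $\mathbf{f}(K_j)$ by the homeomorphism from (j1), it is Borel measurable; hence $\xi_k:=\mathbf{f}^{-1}(Y_k)$ and $\xi:=\mathbf{f}^{-1}(Y)$ are $\xcal$-valued Borel measurable, with the correct laws $\mu_{m_k}$ and a Radon law for $\xi$, since $\mathbf{f}$ is a measurable bijection between $\xcal$ and $\mathbf{f}(\xcal)$.

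The hard part will be the final step: upgrading the almost sure convergence $Y_k\to Y$ in $\rzecz^{\nat}$ to almost sure convergence $\xi_k\to\xi$ in the original, possibly strictly finer, topology of $\xcal$. The difficulty is that $\mathbf{f}^{-1}$ is continuous only on each individual compact $\mathbf{f}(K_j)$ and not on all of $\Sigma$, so coordinatewise convergence of the images does not by itself control the behaviour in $\xcal$. The mechanism I would use is exactly the tightness: on every compact subset of $\xcal$ the original topology and the initial topology induced by $\{f_m\}$ coincide, which is precisely the homeomorphism statement of (j1), so it suffices to confine, almost surely, the tails of $(\xi_k)$ to a fixed compact $K_j$. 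Establishing this compact containment from the uniform estimates $\nu_{m_k}(\Sigma\setminus\mathbf{f}(K_j))\le 1/j$ together with the almost sure convergence of $(Y_k)$, so that coordinatewise convergence then transfers through the homeomorphism to convergence in $\xcal$, is the delicate measure-theoretic core of the argument, and the place where the Radonness of the limit and the $\sigma$-compact support are essential.
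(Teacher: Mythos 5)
First, note that the paper offers no proof of this statement: it is imported verbatim as Theorem A.1 of \cite{Brzezniak_Ondrejat_2011} (going back to \cite{Jakubowski_1998}), so there is no in-paper argument to compare yours against. Judged on its own terms, your reduction is sound up to the last step: the Hausdorff property of $\xcal$, part (j1) via the continuous injection $\mathbf{f}=(f_m):\xcal\to\rzecz^{\nat}$ restricted to compacts, the identification $\scal=\mathbf{f}^{-1}(\mathcal{B}(\rzecz^{\nat}))$, the transfer of tightness to the push-forwards $\nu_m$, Prokhorov, the portmanteau argument giving $\nu(\Sigma)=1$ with $\Sigma=\bigcup_j\mathbf{f}(K_j)$, and the Borel measurability of $\mathbf{f}^{-1}$ on the $\sigma$-compact set $\Sigma$ are all correct.

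The genuine gap is exactly the step you flag and then leave open, and it cannot be closed with the tools you have assembled. Invoking the classical Skorokhod representation on $\rzecz^{\nat}$ as a black box gives you only the marginal laws of the $Y_k$ and the a.s.\ convergence $Y_k\to Y$ in the product topology; the property you need --- that for a.e.\ $\omega$ there is a single $j$ with $Y_k(\omega)\in\mathbf{f}(K_j)$ for all (large) $k$ --- is a statement about the \emph{joint} law of the sequence $(Y_k)_k$, and it does not follow from the uniform marginal bounds $\p\bigl(Y_k\notin\mathbf{f}(K_j)\bigr)\le 1/j$. Indeed $\sum_k \p(Y_k\notin\mathbf{f}(K_j))=\infty$ for every $j$, so Borel--Cantelli is unavailable, and neither Fatou nor reverse Fatou gives a lower bound on $\p\bigl(\liminf_k\{Y_k\in\mathbf{f}(K_j)\}\bigr)$; a coupling realizing the given marginals can perfectly well have $\p(\exists k:\,Y_k\notin\mathbf{f}(K_j))=1$ for every $j$, in which case coordinatewise convergence of $Y_k$ says nothing about convergence of $\xi_k$ in the finer topology of $\xcal$ (think of $\xcal=\ccal([0,1])$ with the uniform topology and $f_m(u)=u(q_m)$: pointwise convergence on rationals implies uniform convergence only inside a fixed equicontinuous compact). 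This is precisely why Jakubowski does not cite the classical theorem but rebuilds the representation by hand on $([0,1],\mbox{Leb})$, partitioning according to the exhaustion $K_1\subset K_2\subset\cdots$ so that for each $j$ there is one event of probability at least $1-\eps_j$ on which \emph{all} the representing variables take values in $K_j$; that uniform confinement, together with your (correct) observation that $\mathbf{f}$ is a homeomorphism on each $K_j$, is what yields a.s.\ convergence in $\xcal$. To repair your proof you must either reproduce that construction or isolate and prove the corresponding ``uniform tightness of the representation'' lemma; as written, the decisive implication is asserted rather than proved.
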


\bigskip \noindent
Using  Theorem \ref{T:Jakubowski_Skorokhod_Ondrejat}, we obtain the following corollary which we will apply to construct a martingale solution to the Navier-Stokes equations.

\bigskip
\begin{cor}  \label{C:Skorokhod_Z}
Let $({\eta }_{n}{)}_{n \in \nat }$ be a sequence of $\zcal $-valued random variables such that their laws $\lcal ({\eta }_{n})$ on $(\zcal ,\tcal )$ form a tight sequence of probability measures.
Then there exists a subsequence $({n}_{k})$, a probability space $(\tOmega , \tfcal ,\tp )$ and $\zcal $-valued random variables $\tilde{\eta }, {\tilde{\eta }}_{k} $, $k \in \nat $ such that the variables ${\eta }_{k}$
and ${\tilde{\eta }}_{k}$ have the same laws on $\zcal $
and ${\tilde{\eta }}_{k}$ converges to $\tilde{\eta }$ almost surely on $\tOmega $.
\end{cor}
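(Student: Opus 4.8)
The plan is to deduce the statement directly from the Jakubowski--Skorokhod Theorem \ref{T:Jakubowski_Skorokhod_Ondrejat}, applied to the topological space $\xcal := \zcal$ endowed with the supremum topology $\tcal$. The sequence of laws $\lcal(\eta_n)$ is tight on $(\zcal,\tcal)$ by assumption, so the only hypothesis of that theorem still to be checked is the existence of a countable family $\{ f_m \}$ of $\tcal$-continuous functions $f_m : \zcal \to \rzecz$ that separates the points of $\zcal$. Once such a family is produced, part (j2) of the theorem yields at once the subsequence $(n_k)$, the probability space $(\tOmega,\tfcal,\tp)$ and the random variables $\tilde\eta_k,\tilde\eta$ with the required properties.

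To construct the separating family I would use that $\tcal$ is the supremum of the four topologies $\tcal_1,\dots,\tcal_4$ defining $\zcal$, so that every function continuous for one of them is $\tcal$-continuous. Fix a countable set $\{ t_j \}$ dense in $[0,T]$ and a countable set $\{ g_m \}$ dense in $U$, and consider the maps $u \mapsto \ddual{U^{\prime}}{u(t_j)}{g_m}{U}$. Each is the composition of the $\tcal_1$-continuous evaluation $u \mapsto u(t_j) \in U^{\prime}$ with a fixed continuous functional on $U^{\prime}$, hence is $\tcal$-continuous. If two elements $u,v \in \zcal$ yield equal values under all these maps, then density of $\{ g_m \}$ in $U$ gives $u(t_j)=v(t_j)$ in $U^{\prime}$ for every $j$, and density of $\{ t_j \}$ together with the $U^{\prime}$-continuity of $u$ and $v$ (both belong to $\ccal([0,T];U^{\prime})$) forces $u=v$ in $\zcal$. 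Thus this countable family already separates points; if desired it may be enlarged by the weakly continuous functionals $u \mapsto \int_{0}^{T} \ilsk{u(s)}{\varphi_m(s)}{V}\, ds$ on $L_w^2(0,T;V)$, by the evaluations $u \mapsto \ilsk{u(t_j)}{h_m}{H}$ on $\ccal([0,T];H_w)$ (with $\{ h_m \}$ dense in $H$), and by continuous functions built from the seminorms $q_{T,R}$ on $L^2(0,T;H_{loc})$, so that the $\sigma$-algebra $\scal$ generated by $\{ f_m \}$ is adapted to all four factors.

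Having verified the hypothesis, I would invoke Theorem \ref{T:Jakubowski_Skorokhod_Ondrejat}. Since the $f_m$ are $\tcal$-continuous, $\scal$ is contained in the Borel $\sigma$-algebra of $(\zcal,\tcal)$, so each $\lcal(\eta_n)$ restricts to a measure on $(\zcal,\scal)$; and because every $\tcal$-compact set is metrizable by (j1) and, being a compact set on which $\{ f_m \}$ separates points, is $\scal$-measurable, the given tightness on $(\zcal,\tcal)$ transfers to tightness on $(\zcal,\scal)$. Part (j2) then produces the subsequence, the probability space $(\tOmega,\tfcal,\tp)$ and $\zcal$-valued Borel variables $\tilde\eta_k,\tilde\eta$ with $\lcal(\tilde\eta_k)=\lcal(\eta_{n_k})$ and $\tilde\eta_k \to \tilde\eta$ $\tp$-almost surely in $\zcal$, which is the assertion. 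The main obstacle is precisely the point-separation step of the second paragraph: because $\zcal$ contains the two non-metrizable factors $L_w^2(0,T;V)$ and $\ccal([0,T];H_w)$, the classical Skorokhod theorem does not apply, and one must exhibit an explicit countable separating family of continuous functionals and check that it distinguishes elements of the intersection.
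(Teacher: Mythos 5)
Your proposal is correct and follows essentially the same route as the paper: both reduce the statement to Theorem \ref{T:Jakubowski_Skorokhod_Ondrejat} by exhibiting a countable family of $\tcal$-continuous real functions separating points of $\zcal$. The only cosmetic difference is that you build a single separating family from the $\ccal([0,T];U^{\prime})$ factor (evaluations at a dense set of times paired with a dense subset of $U$), whereas the paper assembles separating families on each of the four factor spaces separately; both choices are valid since the $\ccal([0,T];U^{\prime})$ structure alone already determines an element of the intersection.
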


\bigskip  \noindent
\begin{proof}
It is sufficient to prove that on each space appearing in the definition (\ref{E:Z})
of the space $\zcal $ there exists a countable set of continuous real-valued functions separating points.

\bigskip \noindent
Since the spaces $\ccal ([0,T];U^{\prime })$ and ${L}^{2}(0,T; {H}_{loc })$ are separable metrizable and complete,
this condition is satisfied, see \cite{Badrikian_70}, expos\'{e} 8.

\bigskip  \noindent
For the space ${L}^{2}_{w}(0,T;V)$ it is sufficient to put
$$
    {f}_{m}(u):= \int_{0}^{T} \ilsk{u(t)}{{v}_{n}(t)}{V} \, dt \in \rzecz ,
 \qquad u \in {L}^{2}_{w}(0,T;V),\quad m \in \nat ,
$$
where $\{ {v}_{m}, m \in \nat  \} $ is a dense subset of ${L}^{2}(0,T;V)$.

\bigskip \noindent
Let us consider the space $\ccal ([0,T];{H}_{w})$ defined by (\ref{E:C([0,T];H_w)}).
Let $\{ {h}_{m}, \, m \in \nat  \}  $ be any dense subset of $H$ and let ${Q}_{T}$ be the  set of rational numbers belonging to the interval $[0,T]$.
Then the family $\{ {f}_{m,t}, \, m \in \nat , \, \, t \in {Q}_{T} \} $ defined by
$$
      {f}_{m,t}(u):= \ilsk{u(t)}{{h}_{m}}{H} \in \rzecz ,
 \qquad u \in  \ccal ([0,T];{H}_{w}), \quad m \in \nat ,
      \quad t \in {Q}_{T}
$$
consists of continuous functions separating points in  $\ccal ([0,T];{H}_{w})$.
Now, the statement follows from Theorem \ref{T:Jakubowski_Skorokhod_Ondrejat}, which completes the proof.
\end{proof}

\bigskip
\section{Stochastic Navier-Stokes equations}  \label{S:Navier_Stokes}

\bigskip
\noindent
We  consider the following stochastic evolution equation
\begin{equation} \label{E:NS}
\begin{cases}
& du(t)+\acal u(t) \, dt+B\bigl( u(t),u(t) \bigr)\, dt = f(t) \, dt+G\bigl( u(t)\bigr) \, dW(t),
 \qquad t \in [0,T] , \\
& u(0) = {u}_{0} .
\end{cases}
\end{equation}

\bigskip
\noindent
\bf Assumptions.  \rm We assume that
\begin{description}
\item[(A.1)] $ W(t)$  is a cylindrical  Wiener process in a separable Hilbert space $Y$ defined on the stochastic basis $\bigl( \Omega , \fcal , \mathbb{F} , \p  \bigr) $ with a filtration $\mathbb{F}={\{ \ft \} }_{t \ge 0}$;
\item[(A.2)] $\, \, {u}_{0} \in H $, $ f \in {L}^{p} (0,T; V^{\prime })$, where $p$ satisfies condition \eqref{eqn-p_cond} below;
\item[(A.3)] The mapping $G: V \to \lhs (Y,H) $ is Lipschitz continuous and
\begin{equation} \label{E:G}
     2 \dual{\acal u }{u }{} -  \norm{G(u )}{\lhs (Y,H)}{2}
     \ge  \eta \norm{u}{}{2} -{\lambda }_{0} {|u |}_{H}^{2} - \rho , \qquad u \in V ,
  \tag{G}
\end{equation}
for some constants ${\lambda }_{0}$, $\rho $ and $\eta \in (0,2]$.

\bigskip \noindent
Moreover, $G $ extends to a  mapping $G : H \to \lhs (Y, {V^{\prime }}) $  such that
\begin{equation} \label{E:G*}
   \norm{G(u)}{\lhs (Y, {V^{\prime }})}{2} \le C (1 + {|u|}_{H}^{2}) , \qquad u \in H .
  \tag{G$^\ast $}
\end{equation}
for some $C>0$. Moreover, for every $\psi \in \vcal $
\begin{equation} \label{E:G**}
  \mbox{ the mapping } H \ni u \mapsto \dual{G(u)}{\psi }{} \in Y  \mbox{ is continuous},
   \tag{G$^{\ast \ast} $}
\end{equation}
if in the space $H$ we consider the Fr\'{e}chet topology  inherited from the space
${L}^{2}_{loc}(\mathcal{O} , {\mathbb{R}}^{d})$.
\end{description}

\bigskip \noindent
By ${L}^{2}_{loc}(\mathcal{O} , {\mathbb{R}}^{d})$ we denote the space of all Lebesgue measurable $\rd $-valued functions $v$ such that $\int_{K}|v(x){|}^{2} \, dx < \infty $ for every compact subset $K \subset \ocal $. In this space we consider the Fr\'{e}chet topology generated by the family of seminorms
$$
     \Bigl( \int_{{\ocal }_{R}} |v(x){|}^{2} \, dx {\Bigr) }^{\frac{1}{2}} , \qquad R \in \nat ,
$$
where $({\ocal }_{R}{)}_{R \in \nat }$ is any increasing sequence of open bounded subsets of $\ocal $.

\bigskip \noindent
More precisely, in condition (\ref{E:G**}) we identify $\dual{G(\cdot )}{\psi }{} $ with the mapping ${\psi}^{\ast \ast } G: H \to Y^\prime$
defined by
\begin{equation}
  \bigl( {\psi}^{\ast \ast } G(u)\bigr) y:= \bigl( G(u)y \bigr) \psi \in \rzecz ,
 \qquad u\in H, \quad y\in Y   .   \tag{G$^{\prime\ast \ast} $}
\end{equation}

\bigskip  \noindent
Inequality (G) in assumption (A.3) is the same as considered by Flandoli and G\c{a}tarek in \cite{Flandoli+Gatarek_1995} for bounded domains. The assumption $\eta = 2$ corresponds to the case when the noise term does not depend on $\nabla u$. We will prove that the set of measures induced on appropriate space by the solutions of the Galerkin equations  is tight provided that
 assumptions (G) and (\ref{E:G*}) are satisfied.
Assumptions  (\ref{E:G*}),  (\ref{E:G**})  will be important in passing to the limit as $n\to \infty $ in the Galerkin approximation.  Assumption (\ref{E:G**}) is essential if  the domain is unbounded.

\bigskip
\begin{definition}  \rm  \label{def-sol-martingale}
We say that there exists \bf a martingale solution \rm of the equation (\ref{E:NS})
iff there exist
\begin{itemize}
\item a stochastic basis $\bigl( \hat{\Omega }, \hat{\fcal }, \hat{\fmath } ,
\hat{\p }  \bigr) $ with filtration $\hat{\fmath }={\{ {\hat{\fcal }}_{t} \} }_{t \ge 0}$,
\item a cylindrical Wiener process $\hat{W}$ on the space $Y$,
\item and a progressively measurable process
$u: [0,T] \times \hat{\Omega } \to H$ with $\hat{\p } $-a.e. paths
$$
  u(\cdot , \omega ) \in \ccal \bigl( [0,T], {H}_{w} \bigr)
   \cap {L}^{2}(0,T;V )
$$
\end{itemize}
such that for all $ t \in [0,T] $ and all $v \in \vcal $:
\begin{eqnarray}
 \ilsk{u(t)}{v}{H} +  \int_{0}^{t} \dual{\acal u(s)}{v}{} \, ds
+ \int_{0}^{t} \dual{B(u(s),u(s))}{v}{} \, ds & & \nonumber \\
  = \ilsk{{u}_{0}}{v}{H} +  \int_{0}^{t} \dual{f(s)}{v}{} \, ds
 + \Dual{\int_{0}^{t} G(u(s))\, d\hat{W}(s)}{v}{}  & &  \label{E:mart_sol_int_identity}
\end{eqnarray}
the identity holds $\hat{\p }$-a.s.
\end{definition}

\bigskip
\section{Existence of solutions}  \label{S:Existence}

\bigskip
\begin{theorem} \label{T:existence}
Let assumptions (A.1)-(A.3) be satisfied. Then there exists a martingale solution
$\bigl( \hat{\Omega }, \hat{\fcal }, \hat{\fmath },\hat{\p } ,u \bigr) $ of problem (\ref{E:NS}) such that
\begin{equation}
  \hat{\e} \Bigl[ \sup_{t \in [0,T]} |u(t){|}_{H}^{2} + \int_{0}^{T}\norm{u(t)}{}{2} \, dt  \Bigr]
  < \infty .
\end{equation}

\end{theorem}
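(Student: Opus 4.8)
The plan is to follow the Faedo--Galerkin approximation, compactness and Skorokhod scheme outlined in the Introduction. First I would solve, for each fixed $n\in\nat$, the finite dimensional It\^o equation
\begin{equation*}
 d\un(t) = -\bigl[\Pn\acal\un(t)+\Bn(\un(t))-\Pn f(t)\bigr]\,dt+\Pn G(\un(t))\,dW(t),\qquad \un(0)=\Pn u_0,
\end{equation*}
on the space $span\{e_1,\dots,e_n\}$. Since $B:V\times V\to V^\prime$ is locally Lipschitz by Lemma \ref{L:estimate_B} and $G$ is Lipschitz by assumption (A.3), the coefficients are locally Lipschitz, so there is a unique local strong solution; the uniform bounds derived below exclude explosion and yield a global solution $\un$.

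The decisive step is the derivation of $n$-uniform a priori estimates. Applying the It\^o formula to $|\un(t)|_H^2$ and using the cancellation $\dual{\Bn(\un)}{\un}{}=b(\un,\un,\un)=0$, which follows from (\ref{E:wirowosc_b}), together with the coercivity inequality (\ref{E:G}) and the growth bound (\ref{E:G*}), I would obtain, after taking expectations and applying the Gronwall lemma, that
\begin{equation*}
 \sup_{n\ge 1}\e\Bigl[\int_0^T\norm{\un(s)}{V}{2}\,ds\Bigr]<\infty .
\end{equation*}
A Burkholder--Davis--Gundy estimate for the stochastic integral then promotes this to
\begin{equation*}
 \sup_{n\ge 1}\e\bigl(\sup_{0\le s\le T}|\un(s)|_H^p\bigr)<\infty
\end{equation*}
for $p$ in the range stated in the Introduction. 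These are precisely the moment bounds (a) and (b) feeding the tightness criterion.

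I would then establish tightness of the laws $\lcal(\un)$ on $\zcal$ via Corollary \ref{C:tigthness_criterion}: conditions (a) and (b) are the estimates just obtained, while the Aldous condition (c) I would verify by decomposing the equation into its drift and martingale parts and bounding, over an interval $[\tau_n,\tau_n+\theta]$ attached to an arbitrary $\mathbb{F}$-stopping time $\tau_n$, the increments in the $U^\prime$-norm, using (\ref{E:Acal_V'_norm}), (\ref{E:estimate_B_ext}), (\ref{E:G*}) and the It\^o isometry. Since the limit equation must be driven by a Wiener process, I would apply the Jakubowski--Skorokhod theorem (Corollary \ref{C:Skorokhod_Z}) to the joint laws of $\un$ and the noise, obtaining a new stochastic basis $(\tOmega,\tfcal,\tp)$, a subsequence, processes $\tunk$ with the same laws as $\unk$ converging $\tp$-a.s. in $\zcal$ to a limit $\tu$, and a limit cylindrical Wiener process $\tW$, with each $\tunk$ still solving the Galerkin equation of index $n_k$ (an almost-sure relation among components whose joint law is preserved); alternatively the driving process can be reconstructed through the martingale representation theorem as in \cite{DaPrato+Zabczyk_1992}.

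The final and hardest step is to identify $(\tOmega,\tfcal,\tp,\tW,\tu)$ as a martingale solution by passing to the limit $\kinf$ in the weak formulation tested against a fixed $\psi\in\vcal$. The linear term passes by weak $L^2(0,T;V)$ convergence. The real obstacle is the nonlinear term: as the domain may be unbounded, the embedding $V\hookrightarrow H$ is not compact, so strong $L^2(0,T;H)$ convergence is unavailable; instead I would use that $\psi$ has compact support together with the $L^2(0,T;H_{loc})$ convergence encoded in $\zcal$ to pass to the limit in $\int_0^t b(\tunk,\tunk,\psi)\,ds$. For the stochastic term I would invoke assumption (\ref{E:G**}), which guarantees continuity of $u\mapsto\dual{G(u)}{\psi}{}$ in the local topology, together with the bound (\ref{E:G*}), to identify the limit of the stochastic integrals with $\Dual{\int_0^t G(\tu)\,d\tW}{\psi}{}$. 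Finally, the claimed energy estimate follows from the uniform bounds transported to $(\tOmega,\tfcal,\tp)$ by equality of laws, combined with weak lower semicontinuity of the $H$- and $V$-norms and Fatou's lemma. I expect the nonlinear-term passage, hinging on the $H_{loc}$ topology and the compact support of the test function, together with the stochastic-integral convergence relying on (\ref{E:G**}), to be the crux of the argument.
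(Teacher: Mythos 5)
Your proposal is correct in outline and coincides with the paper's scheme up to and including the tightness step (Galerkin approximation, the It\^o/BDG a priori estimates under (\ref{E:p_cond}), Corollary \ref{C:tigthness_criterion} via the Aldous condition). Where you diverge is in the identification of the limit. Your primary route applies the Jakubowski--Skorokhod theorem to the \emph{joint} laws of $(\un ,W)$ and then passes to the limit directly in the weak formulation, including the stochastic integral; the paper instead applies Corollary \ref{C:Skorokhod_Z} to the laws of $\un $ alone, defines the compensated processes $\tMn $ in (\ref{E:tMn}), passes to the limit in the martingale identities (\ref{E:EtMn})--(\ref{E:EqvtMn}) by Vitali's theorem (Lemmas \ref{L:conv_martingale}--\ref{L:conv_quadr_var}), and only then reconstructs a Wiener process via the martingale representation theorem, composing with $({L}^{-1})^{\prime }$ to land in $H$. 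Your route buys a more direct argument with no auxiliary operator $L$ and no representation theorem, but it obliges you to (i) topologize the noise and include it in the tightness claim, (ii) justify that each $\tunk $ still solves its Galerkin equation after the change of probability space, and (iii) prove convergence of the stochastic integrals $\int_{0}^{t}{P}_{{n}_{k}} G(\tunk )\, d{\tW }_{k}$, which needs a separate lemma on limits of stochastic integrals with jointly converging integrands and integrators; none of these is insurmountable, but each is a genuine step you should not leave implicit. The paper's route trades these for the uniform-integrability estimates on the quadratic-variation functionals, which is precisely where the moment bounds with $p=2r>2$ and the restriction $p<2+\eta /(2-\eta )$ are consumed --- a role your sketch does not quite assign to them, since you present the higher moments only as input to the tightness criterion (which in fact only needs $p=2$). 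You do flag the martingale-representation alternative, and your use of the ${L}^{2}(0,T;{H}_{loc})$ convergence plus compact support for the nonlinear term, and of (\ref{E:G**}) for the noise, matches the paper's Lemmas \ref{L:pointwise_conv} and \ref{L:conv_quadr_var} exactly.
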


\subsection{Faedo-Galerkin approximation}

\bigskip  \noindent
Let $\{ {e}_{i} {\} }_{i =1}^{\infty  }$ be the orthonormal basis in $H$ composed of eigenvectors of $L$.
Let ${H}_{n}:= span \{ {e}_{1}, \! ...,\! {e}_{n} \} $ be the subspace with the norm inherited from $H$ and
let $\Pn : U^{\prime } \to {H}_{n} $  be defined by (\ref{E:tP_n}).
Consider the following mapping
$$
  \Bn (u):= \Pn B({\chi }_{n}(u),u) , \qquad u \in {H}_{n},
$$
where ${\chi }_{n}:H \to H $ is defined by ${\chi }_{n}(u) = {\theta }_{n}(|u {|}_{U^{\prime }})u$
with ${\theta }_{n } : \rzecz \to [0,1]$  of class ${\ccal }^{\infty }$ such that
$$
 {\theta }_{n}(r) =
 \begin{cases}
  1 \quad \mbox{if} \quad  r \le n  \\
   0 \quad \mbox{if} \quad  r \ge n+1 .
 \end{cases}
$$
Since ${H}_{n} \subset H$, ${B}_{n}$ is well defined. Moreover, ${B}_{n}:{H}_{n} \to {H}_{n}$ is globally Lipschitz continuous.

\bigskip  \noindent
Let us consider the classical Faedo-Galerkin approximation in the space $ {H}_{n}$
\begin{equation} \label{E:Galerkin}
\begin{cases}
  & d \un (t) =  - \bigl[ \Pn \acal \un (t)  + \Bn  \bigl(\un (t)\bigr)  - \Pn f(t) \bigr] \, dt
   + \Pn G\bigl( \un (t)\bigr) \, dW(t),   \quad t \in [0,T] , \\
  &  \un (0) = \Pn {u }_{0} .
\end{cases}
\end{equation}

\bigskip \noindent
The proof  of the next result is standard and thus omitted.

\begin{lemma} \label{L:Galerkin_existence}
For each $n \in \nat $,  there exists a solution of the Galerkin equation (\ref{E:Galerkin}). Moreover, $\un \in \ccal ([0,T];{H}_{n})$, $\p $-a.s. and $\e [\int_{0}^{T}|\un (s){|}_{H}^{q}\, ds]< \infty $ for any $q\in [2,\infty )$.
\end{lemma}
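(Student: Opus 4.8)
The plan is to recognise (\ref{E:Galerkin}) as a finite-dimensional stochastic differential equation in $H_n$ driven by the cylindrical noise $W$, whose coefficients are globally Lipschitz, and then to invoke the classical existence, uniqueness and moment theory for such equations. Writing the drift as $b_n(t,u):= -\Pn \acal u - \Bn(u) + \Pn f(t)$ and the diffusion as $\sigma_n(u):= \Pn G(u)$, I would first check that both have the required regularity on $H_n$. The operator $\Pn \acal$ restricted to the finite-dimensional space $H_n$ is linear and bounded, hence globally Lipschitz; the map $\Bn:H_n\to H_n$ is globally Lipschitz by the property recorded just before the statement (this is precisely the role of the cut-off $\chi_n$, which removes the quadratic growth of $B$); and $\Pn f \in {L}^{2}(0,T;H_n)$ because $f\in {L}^{2}(0,T;V^{\prime })$ and $\Pn:V^{\prime }\to H_n$ is bounded, so the time dependence of $b_n$ is measurable and square integrable. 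For the diffusion, assumption (A.3) gives that $G:V\to\lhs(Y,H)$ is Lipschitz continuous; since on the finite-dimensional space $H_n$ the norms $\norm{\cdot}{V}{}$ and $|\cdot|_H$ are equivalent and $\Pn$ acts as a contraction on the relevant spaces, $\sigma_n=\Pn G$ is globally Lipschitz from $H_n$ into $\lhs(Y,H_n)$, with values that are Hilbert--Schmidt operators of finite-dimensional range. Consequently the stochastic integral $\int_0^t \Pn G(\un(s))\,dW(s)$ is a well-defined, continuous, $H_n$-valued local martingale.

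With these properties in hand, the existence and pathwise uniqueness of a global, continuous, $\mathbb{F}$-adapted $H_n$-valued strong solution follow from the standard theory for stochastic evolution equations with globally Lipschitz coefficients, for instance \cite{DaPrato+Zabczyk_1992}. Equivalently, after fixing an orthonormal basis of $Y$ and representing $W$ through a sequence of independent real Brownian motions, (\ref{E:Galerkin}) becomes a genuine ${\rzecz }^{n}$-valued It\^o system (in the coordinates of $\un$ with respect to ${e}_{1},\dots,{e}_{n}$) whose coefficients are Lipschitz and of linear growth and whose diffusion array is square summable by the Hilbert--Schmidt property; the classical It\^o existence theorem then applies directly. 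This yields $\un\in\ccal([0,T];H_n)$, $\p$-a.s.

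Finally, for the moment bounds I would apply the It\^o formula to $t\mapsto (1+|\un(t)|_H^2)^{q/2}$, use the linear growth of the coefficients (a consequence of their global Lipschitz continuity together with $\Pn {u}_{0}\in H_n$ and $f\in {L}^{2}(0,T;V^{\prime })$), control the martingale part by the Burkholder--Davis--Gundy inequality, and close the estimate by Gronwall's lemma. This gives $\e\bigl[\sup_{t\in[0,T]}|\un(t)|_H^q\bigr]<\infty$ for every $q\in[2,\infty)$, whence
$$
 \e\Bigl[\int_0^T |\un(s)|_H^q\,ds\Bigr] \le T\,\e\Bigl[\sup_{t\in[0,T]}|\un(t)|_H^q\Bigr] < \infty .
$$
There is no serious obstacle here, which is why the authors call the proof standard; the only points requiring a little care are the verification that $\Pn G$ is globally Lipschitz in the Hilbert--Schmidt norm, which rests on the equivalence of the $V$- and $H$-norms on the finite-dimensional space $H_n$, and the correct treatment of the cylindrical, rather than finite-dimensional, driving noise.
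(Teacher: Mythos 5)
Your proposal is correct and is precisely the standard argument the authors have in mind when they write that the proof is ``standard and thus omitted'': reduce (\ref{E:Galerkin}) to a finite-dimensional It\^o equation in ${H}_{n}$ with globally Lipschitz, linearly growing coefficients (using the cut-off in $\Bn $, the equivalence of the $V$- and $H$-norms on ${H}_{n}$, and the contractivity of $\Pn $ in the Hilbert--Schmidt norm), invoke the classical existence and uniqueness theorem, and obtain the moments from the It\^o formula, Burkholder--Davis--Gundy and Gronwall. No gaps.
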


\bigskip  \noindent
Using the It\^{o} formula and the Burkholder-Davis-Gundy inequality, see \cite{DaPrato_Zabczyk_Erg}, we will prove the following lemma about \it a priori \rm estimates of the solutions $\un $ of (\ref{E:Galerkin}).
Let us put the following condition on $p$
\begin{equation} \label{eqn-p_cond}
\begin{cases} p\in  \bigl[ 2, 2+ \frac{\eta }{2-\eta } \bigr) \quad \mbox{ if $\quad \eta \in(0,2)$,  } \\
p\in [2, \infty )  \quad \mbox{ if $\quad \eta =2 $}.
\end{cases}
\end{equation}

\bigskip
\begin{lemma} \label{L:Galerkin_estimates }
The processes $(\un {)}_{n \in \nat }$ satisfy the following estimates.
\begin{description}
\item[(i) ]
For every $p$ satisfying (\ref{eqn-p_cond}) there exist  positive constants ${C}_{1}(p)$ and ${C}_{2}(p)$ such that
\begin{equation} \label{E:H_estimate}
 \sup_{n \ge 1 } \e \bigl( \sup_{0 \le s \le T } |\un (s){|}_{H}^{p} \bigr) \le {C}_{1}(p) .
\end{equation}
and
\begin{equation} \label{E:HV_estimate}
 \sup_{n \ge 1 } \e \bigl[ \int_{0}^{T} |\un (s){|}_{H}^{p-2} \norm{ \un (s)}{}{2} \, ds \bigr] \le {C}_{2}(p)  .
\end{equation}
\item[(ii)] In particular, with ${C}_{2}:= {C}_{2}(2)$
\begin{equation} \label{E:V_estimate}
  \sup_{n \ge 1 }  \e \bigl[ \int_{0}^{T} \norm{ \un (s)}{}{2} \, ds \bigr] \le {C}_{2}.
\end{equation}
\end{description}
\end{lemma}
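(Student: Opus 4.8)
The plan is to apply the It\^o formula to $u \mapsto |u|_H^p$ along the finite-dimensional Galerkin process $\un$, to turn the structural assumption (G) into a differential inequality in which the dissipation $\norm{\un}{}{2}$ appears with a \emph{strictly negative} coefficient, and then to close the estimates by the Gronwall Lemma (for the time-integral and moment bounds) and by the Burkholder--Davis--Gundy inequality (for the pathwise supremum). Throughout I would first localise by the stopping times $\tau_N := \inf\{ t\in[0,T] : |\un(t)|_H \ge N \} \wedge T$; on $[0,\tau_N]$ the stochastic integrals are genuine martingales and every supremum moment is \emph{a priori} finite, so the absorption steps below are legitimate, the resulting constants are independent of $N$, and one lets $N \to \infty$ by Fatou's Lemma. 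The integrability provided by Lemma~\ref{L:Galerkin_existence} guarantees all these quantities are finite.

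First I would record the It\^o formula for $|\un(t)|_H^2$. The decisive simplification is that the nonlinear term drops out: since $\chi_n(\un) = \theta_n(|\un|_{U^{\prime}})\un$ is a scalar multiple of $\un$, property (\ref{E:wirowosc_b}) gives $\ilsk{\Bn(\un)}{\un}{H} = \dual{B(\chi_n(\un),\un)}{\un}{} = \theta_n(|\un|_{U^{\prime}})\,b(\un,\un,\un) = 0$. Using that $\Pn$ is the $\ilsk{\cdot}{\cdot}{H}$-orthogonal projection (so $\norm{\Pn G(\un)}{\lhs(Y,H)}{} \le \norm{G(\un)}{\lhs(Y,H)}{}$) and the identity $\dual{\acal u}{u}{} = \norm{u}{}{2}$ coming from (\ref{E:Acal_ilsk_Dir}), I would invoke assumption (G) directly for $p=2$, and for the martingale estimates I would also record its elementary consequence $\norm{G(u)}{\lhs(Y,H)}{2} \le (2-\eta)\norm{u}{}{2} + {\lambda}_0 |u|_H^2 + \rho$, obtained by rearranging (G) and using $2\dual{\acal u}{u}{} = 2\norm{u}{}{2}$.

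The heart of the argument is the It\^o formula for $|\un|_H^p = (|\un|_H^2)^{p/2}$, $p \ge 2$. Collecting the drift coming from $-2\dual{\acal\un}{\un}{}$, from $\frac{p}{2}\norm{\Pn G(\un)}{\lhs(Y,H)}{2}$, and from the second-order correction $p(\frac{p}{2}-1)|\un|_H^{p-4}\| (\Pn G(\un))^{\ast}\un \|_Y^2 \le p(\frac{p}{2}-1)|\un|_H^{p-2}\norm{G(\un)}{\lhs(Y,H)}{2}$, one obtains after factoring $\frac{p}{2}|\un|_H^{p-2}$ a bracket of the form $-2\dual{\acal\un}{\un}{} + (p-1)\norm{G(\un)}{\lhs(Y,H)}{2}$. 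Splitting off one copy of $\norm{G(\un)}{\lhs(Y,H)}{2}$ for assumption (G) and bounding the remaining $(p-2)\norm{G(\un)}{\lhs(Y,H)}{2}$ by the consequence above, this bracket is $\le [-\eta + (p-2)(2-\eta)]\norm{\un}{}{2} + (p-1){\lambda}_0|\un|_H^2 + (p-1)\rho$. The coefficient $-\eta + (p-2)(2-\eta)$ is strictly negative precisely when $p < 2 + \frac{\eta}{2-\eta}$ (and for all $p \ge 2$ when $\eta=2$), which is exactly condition (\ref{E:p_cond}). Getting this bookkeeping right --- so that the precise admissible range of $p$ emerges from the interplay of the It\^o correction, the projection $\Pn$, and (G) --- is the one genuinely delicate point, and the main obstacle I would expect.

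With the negative coefficient $-\kappa$, $\kappa := \eta - (p-2)(2-\eta) > 0$, secured, the rest is routine. The forcing contributes $p|\un|_H^{p-2}\dual{f}{\un}{} \le \frac{p\kappa}{4}|\un|_H^{p-2}\norm{\un}{}{2} + C|f|_{V^{\prime}}^2(1+|\un|_H^p)$, where half of the dissipation absorbs the gradient part and the elementary bound $r^{p-2}\le 1+r^p$ together with Young's inequality produces the last term; since $|f(s)|_{V^{\prime}}^2$ is integrable in $s$ it serves as a Gronwall coefficient. Taking expectations (the localised martingale has zero mean) and applying the Gronwall Lemma yields $\sup_n \sup_{t\in[0,T]} \e|\un(t)|_H^p < \infty$, and feeding this back into the retained dissipation gives (\ref{E:HV_estimate}); the case $p=2$ is (\ref{E:V_estimate}), proving (ii). Finally, for (\ref{E:H_estimate}) I would take the supremum over $[0,t]$ before the expectation and estimate the martingale by Burkholder--Davis--Gundy: its quadratic variation is controlled by $\int_0^t |\un|_H^{2p-2}\norm{G(\un)}{\lhs(Y,H)}{2}\,ds \le (\sup_{s}|\un(s)|_H^p)\int_0^t |\un|_H^{p-2}[(2-\eta)\norm{\un}{}{2} + {\lambda}_0|\un|_H^2 + \rho]\,ds$, so that Young's inequality absorbs $\tfrac12\,\e\sup_s|\un(s)|_H^p$ into the left-hand side while the remaining integral is already controlled by (\ref{E:HV_estimate}) and the moment bound. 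This closes the estimate and completes the proof.
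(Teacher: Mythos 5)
Your proposal is correct and follows essentially the same route as the paper's proof in Appendix~A: It\^o's formula for $|\un|_H^p$, cancellation of the nonlinear term by the antisymmetry (\ref{E:wirowosc_b}), absorption of the noise via (G) (equivalently its rearranged form (G$^\prime$)) into the dissipation with the strictly negative coefficient that forces exactly the range (\ref{E:p_cond}), Gronwall for the moment and dissipation bounds, and Burkholder--Davis--Gundy plus Young to upgrade to the supremum estimate. The only differences are cosmetic: you split $(p-1)\norm{G(\un)}{\lhs(Y,H)}{2}$ into one copy paired with $2\dual{\acal \un}{\un}{}$ via (G) plus $(p-2)$ copies bounded by (G$^\prime$), which is algebraically identical to the paper's bookkeeping, and you add an explicit stopping-time localisation where the paper instead invokes the integrability from Lemma~\ref{L:Galerkin_existence}.
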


\bigskip
\begin{proof}
See Appendix A.
\end{proof}

\bigskip \noindent
\subsection{Tightness}

\bigskip  \noindent
For each $n \in \nat $, the solution $\un $ of the Galerkin equation defines a measure
$\lcal (\un )$ on $(\zcal , \tcal )$. Using Corollary \ref{C:tigthness_criterion}, inequality (\ref{E:V_estimate}) and inequality (\ref{E:H_estimate}) with $p=2$ we will prove the tightness of this set of measures.

\bigskip
\begin{lemma} \label{L:comp_Galerkin}
The set of measures $\bigl\{ \lcal (\un ) , n \in \nat  \bigr\} $ is tight on $(\zcal , \tcal )$.
\end{lemma}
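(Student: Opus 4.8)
The plan is to apply the tightness criterion of Corollary \ref{C:tigthness_criterion} to the sequence $\Xn:=\un$ of solutions of the Galerkin scheme (\ref{E:Galerkin}). Conditions (a) and (b) of that corollary are precisely the \emph{a priori} bounds already at hand: (a) is inequality (\ref{E:H_estimate}) with $p=2$, and (b) is inequality (\ref{E:V_estimate}) of Lemma \ref{L:Galerkin_estimates }. Hence the only real work is to verify condition (c), namely that $(\un)$ satisfies the Aldous condition \textbf{[A]} in $U^{\prime}$.

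Before estimating I record the structural fact that makes all the $U^{\prime}$-norm bounds uniform in $n$: the projection $\Pn$ is a contraction on $U^{\prime}$. Indeed, for $u^{\ast}\in U^{\prime}$ and $v\in U$, since $\Pn u^{\ast}\in H$ the identification $H\hookrightarrow U^{\prime}$ gives $\ddual{U^{\prime}}{\Pn u^{\ast}}{v}{U}=\ilsk{\Pn u^{\ast}}{v}{H}$, and then (\ref{E:tP_n-P_n}) together with the fact that $\Pn$ is the $\ilsk{\cdot}{\cdot}{U}$-orthogonal projection (Lemma \ref{L:P_n|U}(b)) yields $|\ddual{U^{\prime}}{\Pn u^{\ast}}{v}{U}|=|\ddual{U^{\prime}}{u^{\ast}}{\Pn v}{U}|\le |u^{\ast}|_{U^{\prime}}\,|v|_{U}$, whence $|\Pn u^{\ast}|_{U^{\prime}}\le |u^{\ast}|_{U^{\prime}}$. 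Combined with the continuous embeddings $V^{\prime}\hookrightarrow U^{\prime}$, ${V}_{s}^{\prime}\hookrightarrow U^{\prime}$ and $H\hookrightarrow U^{\prime}$ from (\ref{E:embeddings}), this lets me move every term of (\ref{E:Galerkin}) into $U^{\prime}$ with $n$-independent constants.

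Now fix an arbitrary sequence $(\taun)$ of $\mathbb{F}$-stopping times with $\taun\le T$ and let $\theta\ge0$. Writing (\ref{E:Galerkin}) in integral form I decompose $\un(\taun+\theta)-\un(\taun)=\sum_{i=1}^{4}\bigl[\Jn{i}(\taun+\theta)-\Jn{i}(\taun)\bigr]$, where $\Jn{1}(t):=-\int_{0}^{t}\Pn\acal\un(s)\,ds$, $\Jn{2}(t):=-\int_{0}^{t}\Bn(\un(s))\,ds$, $\Jn{3}(t):=\int_{0}^{t}\Pn f(s)\,ds$ and $\Jn{4}(t):=\int_{0}^{t}\Pn G(\un(s))\,dW(s)$ (the constant term $\Pn {u}_{0}$ drops out of the increment), and I estimate each piece separately. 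For the linear term, $|\acal\un(s)|_{U^{\prime}}\le c\norm{\un(s)}{}{}$ by (\ref{E:Acal_V'_norm}), so the Cauchy--Schwarz inequality in time and (\ref{E:V_estimate}) give $\e|\Jn{1}(\taun+\theta)-\Jn{1}(\taun)|_{U^{\prime}}\le c\,\theta^{1/2}{C}_{2}^{1/2}$. For the nonlinear term the cut-off $\chi_{n}$ bounds $|\chi_{n}(\un(s))|_{H}\le|\un(s)|_{H}$, so (\ref{E:estimate_B_ext}) yields $|\Bn(\un(s))|_{U^{\prime}}\le c|\un(s)|_{H}^{2}$ and hence, by (\ref{E:H_estimate}) with $p=2$, $\e|\Jn{2}(\taun+\theta)-\Jn{2}(\taun)|_{U^{\prime}}\le c\,\theta\,{C}_{1}$. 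The forcing term is bounded deterministically: $\e|\Jn{3}(\taun+\theta)-\Jn{3}(\taun)|_{U^{\prime}}\le c\,\theta^{1/2}\norm{f}{{L}^{2}(0,T;V^{\prime})}{}$.

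The delicate term is the stochastic integral $\Jn{4}$, for which I use the It\^{o} isometry (via optional stopping), the contraction of $\Pn$ on $U^{\prime}$ and the growth bound (\ref{E:G*}):
$$
\e\bigl|\Jn{4}(\taun+\theta)-\Jn{4}(\taun)\bigr|_{U^{\prime}}^{2}
\le c\,\e\int_{\taun}^{\taun+\theta}\norm{G(\un(s))}{\lhs(Y,V^{\prime})}{2}\,ds
\le cC\,\theta\,(1+{C}_{1}),
$$
the last step again combining (\ref{E:G*}) with (\ref{E:H_estimate}) for $p=2$. Thus each $\Jn{i}$ has increments whose first (respectively second) moment in $U^{\prime}$ is bounded by a constant times a strictly positive power of $\theta$, uniformly in $n$ and in the choice of stopping times. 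Splitting the event $\{|\un(\taun+\theta)-\un(\taun)|_{U^{\prime}}\ge\eta\}$ into the four events $\{|\Jn{i}(\taun+\theta)-\Jn{i}(\taun)|_{U^{\prime}}\ge\eta/4\}$ and applying the Chebyshev inequality, I obtain for every $\eps,\eta>0$ a $\delta>0$ with $\sup_{n}\sup_{0\le\theta\le\delta}\p\{|\un(\taun+\theta)-\un(\taun)|_{U^{\prime}}\ge\eta\}\le\eps$, i.e. condition \textbf{[A]}. The tightness then follows from Corollary \ref{C:tigthness_criterion}. I expect the main obstacle to be the stochastic term together with the requirement that all constants be $n$-independent; this is exactly what the contraction of $\Pn$ on $U^{\prime}$ and the uniform estimates of Lemma \ref{L:Galerkin_estimates } secure.
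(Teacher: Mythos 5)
Your proposal is correct and follows essentially the same route as the paper: verify conditions (a) and (b) of Corollary \ref{C:tigthness_criterion} via the \emph{a priori} estimates of Lemma \ref{L:Galerkin_estimates }, then check the Aldous condition \textbf{[A]} by decomposing the increment of $\un$ termwise, bounding the drift terms in first moment and the stochastic integral in second moment by positive powers of $\theta$ uniformly in $n$, and concluding with Chebyshev. Your explicit observation that $\Pn$ is a contraction on $U^{\prime}$ (and your handling of the cut-off ${\chi }_{n}$) makes the $n$-uniformity of the constants slightly more transparent than in the paper's own write-up, but the argument is the same.
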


\bigskip
\begin{proof}
We apply Corollary \ref{C:tigthness_criterion}.
According to estimates  (\ref{E:H_estimate}) and (\ref{E:V_estimate}), conditions (a), (b) are satisfied.
Thus, it is sufficient to  prove that the sequence $(\un {)}_{n \in \nat }$ satisfies the Aldous condition \textbf{[A]}.
Let ${(\taun )}_{n \in \nat} $ be a sequence of stopping times such that $0 \le \taun \le T$.
By (\ref{E:Galerkin}), we have
\begin{eqnarray*}
& & \un (t) \,
 =   \Pn {u}_{0}  - \int_{0}^{t} \Pn \acal  \un (s) \, ds
  - \int_{0}^{t} \Bn \bigl( \un (s) \bigr) \, ds
  + \int_{0}^{t} \Pn f(s) \, ds
  + \int_{0}^{t} \Pn G(\un (s)) \, dW(s)  \nonumber \\
& & =:    \Jn{1} + \Jn{2}(t) + \Jn{3}(t) + \Jn{4}(t) + \Jn{5}(t), \qquad t \in [0,T].
\end{eqnarray*}
Let $\theta  >0 $. First, we make some estimates for each term of the above equality.

\bigskip \noindent
\textbf{ Ad.} ${\Jn{2}}$. \rm  Since $\acal :V \to V^{\prime }$ and ${|\acal (u)|}_{V^{\prime }} \le \norm{u}{}{}$
and the embedding  $V^{\prime } \hookrightarrow U^{\prime }$ is continuous, then by the H\"{o}lder inequality and (\ref{E:V_estimate}), we have the following estimates
\begin{eqnarray}
& &\e \bigl[ \bigl| \Jn{2} (\taun + \theta ) - \Jn{2}(\taun ) {\bigr| }_{U^{\prime }}  \bigr]
 = \e \Bigl[ {\biggl| \int_{\taun }^{\taun + \theta } \Pn \acal \un (s) \, ds \biggr| }_{U^{\prime }} \Bigr]  \le c  \e \Bigl[  \int_{\taun }^{\taun + \theta }
{\bigl|  \acal  \un (s) \bigr| }_{V^{\prime }}  \, ds \biggr]
 \nonumber \\
& &
 \le c  \e \biggl[  \int_{\taun }^{\taun + \theta }
 \norm{  \un (s) }{}{}  \, ds \Bigr]
 \le c {\theta }^{\frac{1}{2}} \Bigl( \e \Bigl[  \int_{0 }^{T }
 \norm{  \un (s) }{}{2}  \, ds \Bigr] {\Bigr) }^{\frac{1}{2}}
\le c   {C}_{2} \cdot {\theta }^{\frac{1}{2}}=: {c}_{2} \cdot {\theta }^{\frac{1}{2}}.  \label{E:Jn2}
\end{eqnarray}

\bigskip \noindent
\textbf{Ad.} ${\Jn{3}}$. \rm Let $\gamma > \frac{d}{2} +1 $
Similarly, since $B: H \times H \to {V}_{\gamma }^{\prime }$ is bilinear and continuous, and the embedding
${V}_{\gamma }^{\prime } \hookrightarrow U^{\prime }$ is continuous, then by (\ref{E:H_estimate}) we have the following estimates
\begin{eqnarray}
& &\e \bigl[ \bigl| \Jn{3} (\taun + \theta ) - \Jn{3}(\taun) {\bigr| }_{U^{\prime }}  \bigr]
 = \e \Bigl[ { \Bigl| \int_{\taun }^{\taun + \theta }
  \Bn \bigl(\un (s) \bigr) \, ds \Bigr| }_{U^{\prime }} \Bigr]
 \le c\e \Bigl[  \int_{\taun }^{\taun + \theta }
{ \bigl|  B\bigl( \un (s)  \bigr)  \bigr| }_{{V}_{\gamma }^{\prime }} \, ds \Bigr] \nonumber \\
& & \le c\e \biggl[  \int_{\taun }^{\taun + \theta }
  \| B \| \cdot \bigl|  \un (s) { \bigr| }_{H}^{2}   \, ds \biggr]
 \le c\| B \|  \cdot  \e \bigl[ \sup_{s \in [0,T]} \bigl| \un (s) { \bigr| }_{H}^{2}\bigr] \cdot \theta
 \le c\| B \| \,  {C}_{1}(2)  \cdot \theta =: {c}_{3} \cdot \theta , \qquad \, \label{E:Jn3}
\end{eqnarray}
where $\| B \| $ stands for the norm of $B: H \times H \to {V}_{\gamma }^{\prime }$.

\bigskip \noindent
\textbf{Ad.} ${\Jn{4}}$. \rm By the continuity of the embedding $U \hookrightarrow V$ we have
\begin{eqnarray}
& &\e \bigl[ \bigl| \Jn{4} (\taun + \theta ) - \Jn{4}(\taun) {\bigr| }_{U^{\prime }}  \bigr]
 = \e \Bigl[ {\Bigl| \int_{\taun }^{\taun + \theta } \Pn f (s) \, ds \Bigr| }_{U^{\prime }} \Bigr]
\le c \, \e \biggl[ {\Bigl| \int_{\taun }^{\taun + \theta }
   f (s) \, ds \Bigr| }_{V^{\prime }} \biggr]  \nonumber \\
& & \le c   {\theta }^{\frac{1}{2}}   \Bigl( \e \Bigl[  \int_{0 }^{T }
{\bigl|  f(s) \bigr| }_{V^{\prime }}^{2}  \, ds \Bigr] {\Bigr) }^{\frac{1}{2}}
= c  {\theta }^{\frac{1}{2}}  \norm{f}{{L}^{2}(0,T;V^{\prime })}{} =: {c}_{4} \cdot {\theta }^{\frac{1}{2}}. \label{E:Jn4}
\end{eqnarray}

\bigskip \noindent
\textbf{Ad.} ${\Jn{5}}$. \rm Since  $ V^{\prime } \hookrightarrow U^{\prime } $, then
by (G*) and (\ref{E:H_estimate}), we obtain the following inequalities
\begin{eqnarray}
& &\e \bigl[ \bigl| \Jn{5} (\taun + \theta ) - \Jn{5}(\taun) {\bigr| }_{U^{\prime }}^{2}  \bigr]
 \nonumber \\
& & = \e \Bigl[ {\Bigl| \int_{\taun }^{\taun + \theta }
 \Pn G( \un (s) ) \, dW(s) \Bigr| }_{U^{\prime }}^{2} \Bigr]
=\e \Bigl[  \int_{\taun }^{\taun + \theta }
 \norm{\Pn G( \un (s) )}{\lhs (Y ,U^{\prime })}{2} \, ds  \Bigr]
    \nonumber \\
& & \le c \, \e \Bigl[  \int_{\taun }^{\taun + \theta }
 \norm{G( \un (s) )}{\lhs (Y ,V^{\prime })}{2} \, ds  \Bigr]
 \le c C\cdot \e \Bigl[  \int_{\taun }^{\taun + \theta }
  ( 1 + | \un (s) {|}_{H}^{2} )\, ds  \Bigr]  \nonumber \\
& & \le cC    \bigl( 1 +
  \e \bigl[ \sup_{s \in [0,T]} \bigl| \un (s) { \bigr| }_{H}^{2}\bigr] \bigr) \theta
\le c C (1+ {C}_{1}(2) ) \theta =: {c}_{5} \cdot \theta . \label{E:Jn5}
\end{eqnarray}

\bigskip \noindent
Let us fix $\eta > 0 $ and $\eps >0$. By the Chebyshev inequality and estimates
(\ref{E:Jn2})-(\ref{E:Jn4}), we obtain
$$
\p \bigl( \bigl\{
\bigl| \Jn{i} (\taun + \theta ) - \Jn{i}(\taun) {\bigr| }_{U^{\prime }} \ge \eta \bigr\} \bigr)
 \le \frac{1}{\eta } \e \bigl[ \bigl| \Jn{i} (\taun + \theta ) - \Jn{i}(\taun) {\bigr| }_{U^{\prime }}  \bigr]
 \le \frac{{c}_{i} \cdot \theta}{\eta }  ,  \qquad n \in \nat
$$
where $i=1,2,3,4$.  Let ${\delta }_{i}:= \frac{\eta }{{c}_{i}} \cdot \eps $.
Then
$$
 \sup_{n\in \nat }\sup_{1 \le \theta \le {\delta }_{i}}  \p \bigl\{
  \bigl| \Jn{i} (\taun + \theta ) - \Jn{i}(\taun) {\bigr| }_{U^{\prime }} \ge \eta \bigr\} \le \eps  ,
  \qquad i=1,2,3,4.
$$
By the Chebyshev inequality and (\ref{E:Jn5}), we have
$$
\p \bigl( \bigl\{
\bigl| \Jn{5} (\taun + \theta ) - \Jn{5}(\taun) {\bigr| }_{U^{\prime }} \ge \eta \bigr\} \bigr)
 \le \frac{1}{{\eta }^{2}} \e \bigl[
  \bigl| \Jn{5} (\taun + \theta ) - \Jn{5}(\taun) {\bigr| }_{U^{\prime }}^{2} \bigr]
 \le  \frac{{c}_{5} \cdot \theta}{{\eta }^{2}} ,  \qquad n \in \nat .
$$
Let ${\delta }_{5}:= \frac{{\eta }^{2}}{{c}_{5}} \cdot \eps $.
Then
$$
 \sup_{n\in \nat }\sup_{1 \le \theta \le {\delta }_{5}} \p \bigl\{
  \bigl| \Jn{5} (\taun + \theta ) - \Jn{5}(\taun) {\bigr| }_{U^{\prime }} \ge \eta \bigr\} \le \eps  .
$$
Since condition \bf [A] \rm holds for each term $\Jn{i}$, $i=1,2,3,4,5$, we infer that it holds also for $(\un )$.
This completes the proof of lemma.
 \end{proof}

\bigskip  \noindent
\subsection{Proof of Theorem \ref{T:existence}}
\bigskip \noindent
The following proof differs from the approach of Mikulevicius and Rozovskii \cite{Mikulevicius+Rozovskii_2005}
and it is based on the method used by  Da Prato and  Zabczyk in \cite{DaPrato+Zabczyk_1992}, Section 8,
and on the Jakubowski's version of the Skorokhod Theorem for nonmetric spaces.

\bigskip \noindent
By Lemma \ref{L:comp_Galerkin} the set of measures $\bigl\{ \lcal (\un ),n\in \nat \bigr\} $ is tight on the space $(\zcal ,\tcal )$ defined by (\ref{E:Z}).
Hence by Corollary \ref{C:Skorokhod_Z} there exist  a subsequence $({n}_{k}{)}_{k}$, a probability space
$\bigl( \tOmega ,\tfcal ,\tp  \bigr) $ and, on this space,
$ \zcal $-valued random variables $\tu $, $\tunk $, $k \ge 1 $
such that
\begin{equation}   \label{E:Skorokhod_appl}
  \tunk \mbox{ \it has the same law as } \unk \mbox{ \it on } \zcal
  \mbox{ \it and } \tunk \to \tu \mbox{ \it in } \zcal,
    \quad  \tp \mbox{ - \it a.s.}
\end{equation}
Let us denote the subsequence $(\tunk{)}_{k}$ again by $(\tun {)}_{n}$.

\bigskip  \noindent
Since $\un \in \ccal ([0,T];\Pn H)$, $\p $-a.s. and $\tun $ and $\un $ have the same laws, and
$\ccal ([0,T];\Pn H)$ is a Borel subset of $\ccal ([0,T]; U^{\prime }) \cap {L}^{2}(0,T;{H}_{loc})$, we have
$$
  \lcal (\tun ) \bigl( \ccal ([0,T]; \Pn H )  \bigr) =1 , \qquad n \ge 1 .
$$
Since $\tun $ and $\un $ have the same laws, and $\ccal ([0,T];\Pn H)$ is a Borel subset of
$\ccal ([0,T]; U^{\prime }) \cap {L}^{2}(0,T;{H}_{loc})$ thus by (\ref{E:H_estimate}) and (\ref{E:V_estimate}) we have
\begin{equation} \label{E:H_estimate'}
 \sup_{n\in \nat }\e \bigl( \sup_{0\le s\le T } \bigl| \tun (s){\bigr| }_{H}^{p}\bigr) \le {C}_{1}(p),
\end{equation}
\begin{equation} \label{E:V_estimate'}
  \sup_{n\in \nat } \e \Bigl[ \int_{0}^{T} {\bigl\| \tun (s)\bigr\| }_{V}^{2}\, ds \Bigr] \le {C}_{2}
\end{equation}
for  all $p$ satisfying condition (\ref{eqn-p_cond}).

\bigskip \noindent
By inequality (\ref{E:V_estimate'}) we infer that the sequence $(\tun )$ contain subsequence, still denoted by $(\tun )$ convergent weakly  in the space ${L}^{2}([0,T]\times \tOmega ; V )$.
Since by (\ref{E:Skorokhod_appl}) $\tp $-a.s. $\tun \to \tu $ in $\zcal $, we conclude that
$\tu \in {L}^{2}([0,T]\times \tOmega ; V )$, i.e.
\begin{equation} \label{E:tu_V_estimate}
   \e \Bigl[ \int_{0}^{T} \norm{\tu (s)}{}{2}\, ds \Bigr] < \infty .
\end{equation}
Similarly, by inequality (\ref{E:H_estimate'}) with $p=2$ we can choose a subsequence of $(\tun )$ convergent weak star in the space ${L}^{2}(\tOmega ; {L}^{\infty }(0,T;H))$ and, using (\ref{E:Skorokhod_appl}), infer that
\begin{equation} \label{E:tu_H_estimate}
\e \bigl[ \sup_{0\le s\le T } \bigl| \tu (s){\bigr| }_{H}^{2}\bigr] < \infty .
\end{equation}

\bigskip  \noindent
For each $n \ge 1$, let us consider a process $\tMn $ with trajectories
in $\ccal ([0,T];H)$ defined by
\begin{equation}  \label{E:tMn}
\tMn (t)
   = \tun (t) \,  -   \Pn \tu (0)  + \int_{0}^{t} \Pn \acal \tun (s) \, ds
  + \int_{0}^{t} \Bn  \bigl( \tun (s)  \bigr) \, ds
  - \int_{0}^{t} \Pn f(s) \, ds ,  \quad
t \in [0,T].
\end{equation}
$\tMn $ is a square integrable martingale with respect to the filtration
${\tilde{\mathbb{F}}}_{n} =({\tilde{\fcal }}_{n,t})$, where
${\tilde{\fcal }}_{n,t} = \sigma \{ \tun (s), \, \, s \le t \} $, with quadratic variation
\begin{equation} \label{E:tMn_qvar}
  {\qvar{\tMn }}_{t} =  \int_{0}^{t} \Pn G(\tun (s)) {G(\tun (s))}^\ast \Pn  \, ds ,
  \qquad t \in [0,T].
\end{equation}
Indeed, since $\tun $ and $\un $ have the same laws, for all $s, t \in [0,T]$, $s \le t $ all functions $h$ bounded continuous on $\ccal ([0,s]; U^{\prime } )$, and all $\psi , \zeta  \in U $,  we have
\begin{equation} \label{E:EtMn}
 \e \bigl[ \dual{ \tMn (t)-\tMn (s)}{\psi }{} \, h \bigl( \tun {}_{|[0,s]} \bigr) \bigr]
 = 0
\end{equation}
and
\begin{eqnarray}
& & \e \Bigl[ \Bigl( \dual{\tMn (t)}{\psi }{} \dual{\tMn (t)}{\zeta }{}
 - \dual{\tMn (s)}{\psi }{} \dual{\tMn (s)}{\zeta }{} \nonumber \\
& & \quad - \int_{s}^{t} \Ilsk{{G(\tun (\sigma ))}^\ast \Pn \psi }{{G(\tun (\sigma ))}^\ast \Pn \zeta }{Y}
\, d\sigma  \Bigl)  \cdot h \bigl( \tun {}_{|[0,s]} \bigr) \Bigr] =0 .   \label{E:EqvtMn}
\end{eqnarray}
Here, $\dual{\cdot }{\cdot }{}$ stands for the dual pairing between $U^{\prime }$ and $U$.
Let us recall that $Y$ is a Hilbert space defined in assumption (A.1).
We will take the limits in (\ref{E:EtMn}) and (\ref{E:EqvtMn}).
Let $\tM $ be an $U^{\prime }$-valued process defined by
\begin{equation}  \label{E:tM}
\tM (t)
   = \tu (t) \,  -    \tu (0)  + \int_{0}^{t}  \acal \tu (s) \, ds
  + \int_{0}^{t} B  \bigl( \tu (s)  \bigr) \, ds
  - \int_{0}^{t}  f(s) \, ds , \quad
t \in [0,T].
 \end{equation}

\bigskip  \noindent
\begin{lemma} \label{L:pointwise_conv}
For all $s,t \in [0,T]$ such that $s \le t $ and all  $\psi  \in U$:
\begin{itemize}
\item[(a)] $\nlim \ilsk{\tun (t)}{\Pn \psi }{H} = \ilsk{\tu (t)}{\psi }{H}, \quad  \tp \mbox{ - a.s.}$,
\item[(b)] $\nlim \int_{s}^{t} \dual{ \acal \tun (\sigma) }{\Pn \psi }{} \, d\sigma
 =\int_{s}^{t} \dual{ \acal \tu (\sigma) }{ \psi }{}\, d\sigma , \quad  \tp \mbox{ - a.s.}$,
 \item[(c)] $
 \lim_{n \to \infty }
 \int_{s}^{t} \dual{ B \bigl( \tun (\sigma )  \bigr) }{\Pn \psi }{}  \, d\sigma
= \int_{s}^{t} \dual{ B\bigl( \tu (\sigma ) \bigr) }{\psi }{}  \, d\sigma ,
 \quad  \tp \mbox{ - a.s.}
$
\end{itemize}
\end{lemma}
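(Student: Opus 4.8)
The plan is to fix a set of full $\tp$-measure on which, by (\ref{E:Skorokhod_appl}), $\tun \to \tu$ in $\zcal$, and to argue pathwise. On such a path the convergence in $\zcal$ unpacks into three modes: strong convergence $\tun \to \tu$ in ${L}^{2}(0,T;{H}_{loc})$, weak convergence $\tun \rightharpoonup \tu$ in ${L}^{2}(0,T;V)$, and convergence in $\ccal([0,T];{H}_{w})$. The last two, via the uniform boundedness principle, also yield $\sup_{n}\norm{\tun}{{L}^{2}(0,T;V)}{}<\infty$ and, for each fixed $t$, $\sup_{n}{|\tun(t)|}_{H}<\infty$. In every item I would write $\Pn\psi=\psi+(\Pn\psi-\psi)$ and invoke Lemma \ref{L:P_n|U}(c), so that $\norm{\Pn\psi-\psi}{V}{}\to0$ and $\norm{\Pn\psi-\psi}{{V}_{s}}{}\to0$; this reduces each assertion to a fixed test function plus a remainder that is controlled by the uniform bounds just listed.

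For (a), the convergence $\tun\to\tu$ in $\ccal([0,T];{H}_{w})$ gives $\ilsk{\tun(t)}{\psi}{H}\to\ilsk{\tu(t)}{\psi}{H}$ at the fixed $t$, while $|\ilsk{\tun(t)}{\Pn\psi-\psi}{H}|\le{|\tun(t)|}_{H}\,\norm{\Pn\psi-\psi}{V}{}\to0$ by the $t$-wise bound on ${|\tun(t)|}_{H}$; one may alternatively note that $\tun(t)\in{H}_{n}$ $\tp$-a.s., whence $\ilsk{\tun(t)}{\Pn\psi}{H}=\ilsk{\tun(t)}{\psi}{H}$ directly. For (b), I would use $\dual{\acal w}{v}{}=\dirilsk{w}{v}{}$ from (\ref{E:Acal_ilsk_Dir}). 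The functional $w\mapsto\int_{s}^{t}\dirilsk{w(\sigma)}{\psi}{}\,d\sigma$ is bounded and linear on ${L}^{2}(0,T;V)$, so $\int_{s}^{t}\dual{\acal\tun}{\psi}{}\,d\sigma\to\int_{s}^{t}\dual{\acal\tu}{\psi}{}\,d\sigma$ by weak convergence, whereas the remainder is dominated by $\sqrt{T}\,\norm{\Pn\psi-\psi}{}{}\,\norm{\tun}{{L}^{2}(0,T;V)}{}\to0$.

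The main obstacle is (c), the nonlinear term on a possibly unbounded $\ocal$. The key device is the antisymmetry (\ref{E:antisymmetry_b}): for $u\in H$ and a test function $\varphi\in{V}_{s}$ one has $\dual{B(u)}{\varphi}{}=b(u,u,\varphi)=-b(u,\varphi,u)=-\int_{\ocal}(u\cdot\nabla\varphi)u\,dx$, which transfers all derivatives onto $\varphi$ so that the integrand depends on $u$ only through ${L}^{2}$-quantities. Writing $D_{n}:=-\int_{s}^{t}\int_{\ocal}[(\tun\cdot\nabla\Pn\psi)\tun-(\tu\cdot\nabla\psi)\tu]\,dx\,d\sigma$, I would split the spatial integral over ${\ocal}_{R}$ and over $\ocal\setminus{\ocal}_{R}$. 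On $\ocal\setminus{\ocal}_{R}$ the Hölder inequality together with the Sobolev embedding $\norm{v}{{L}^{4}}{}\le c\norm{v}{V}{}$ implicit in (\ref{E:b_estimate_V}) gives a bound by $c\,\norm{\nabla\psi}{{L}^{2}(\ocal\setminus{\ocal}_{R})}{}\bigl(\norm{\tun}{{L}^{2}(0,T;V)}{2}+\norm{\tu}{{L}^{2}(0,T;V)}{2}\bigr)$ plus a term involving $\norm{\Pn\psi-\psi}{V}{}$; since $\nabla\psi\in{L}^{2}(\ocal)$ and $\ocal\setminus{\ocal}_{R}\downarrow\emptyset$, the factor $\norm{\nabla\psi}{{L}^{2}(\ocal\setminus{\ocal}_{R})}{}\to0$ as $R\to\infty$ by dominated convergence. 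On the bounded piece ${\ocal}_{R}$ the strong convergence $\tun\to\tu$ in ${L}^{2}(0,T;{H}_{{\ocal}_{R}})$, the uniform ${L}^{\infty}$-bound on $\nabla\Pn\psi$ coming from ${V}_{s}\hookrightarrow{L}^{\infty}$, and the ${L}^{2}(0,T;V)$-bound give, for each fixed $R$, that the ${\ocal}_{R}$-piece of $D_{n}$ tends to $0$ as $n\to\infty$.

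The crux is then the ordering of the double limit. I would combine the two estimates into $\limsup_{n}|D_{n}|\le\delta_{R}$, where $\delta_{R}:=C\,\norm{\nabla\psi}{{L}^{2}(\ocal\setminus{\ocal}_{R})}{}$, by first sending $n\to\infty$ with $R$ fixed (killing the ${\ocal}_{R}$-piece and the $\norm{\Pn\psi-\psi}{V}{}$ remainder), and only afterwards letting $R\to\infty$ so that $\delta_{R}\to0$. Managing this interchange correctly, which hinges on the tail of $\nabla\psi$ being small in ${L}^{2}$ uniformly in $n$ thanks to the $n$-uniform bound $\sup_{n}\norm{\tun}{{L}^{2}(0,T;V)}{}<\infty$, is the step I expect to require the most care.
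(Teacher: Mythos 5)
Your proposal is correct. For parts (a) and (b) it coincides with the paper's argument: work on the a.s. set where $\tun \to \tu $ in $\zcal $, use convergence in $\ccal ([0,T];{H}_{w})$ resp. in ${L}^{2}_{w}(0,T;V)$ against the fixed test function, and absorb the discrepancy $\Pn \psi -\psi $ via Lemma \ref{L:P_n|U}(c) together with the pathwise uniform bounds. For part (c) the underlying ingredients are also the same — antisymmetry of $b$ to move all derivatives onto the test function, the resulting bound $|B(u,w){|}_{{V}_{\gamma }^{\prime }}\le c\,{|u|}_{H}{|w|}_{H}$, strong convergence in ${L}^{2}(0,T;{H}_{loc})$ on bounded pieces, and a tail that is small uniformly in $n$ — but the decomposition differs. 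The paper (Lemma B.1 and Corollary B.2) approximates the test function: it picks ${\psi }_{\eps }\in \vcal $ with $\norm{\psi -{\psi }_{\eps }}{{V}_{\gamma }}{}\le \eps $, treats ${\psi }_{\eps }$ by the local argument on its compact support, and controls the error by $\eps $ times the $n$-uniform ${L}^{2}(0,T;H)$ bounds. You instead truncate the domain, splitting the integral over ${\ocal }_{R}$ and $\ocal \setminus {\ocal }_{R}$ and measuring the tail by $\norm{\nabla \psi }{{L}^{2}(\ocal \setminus {\ocal }_{R})}{}$ via H\"older and the Sobolev embedding $V\hookrightarrow {L}^{4}$. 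The trade-off: the paper's error term needs only the $n$-uniform ${L}^{2}(0,T;H)$ bound on $\tun $ but pays with the full ${V}_{\gamma }$-norm of $\psi -{\psi }_{\eps }$, whereas yours needs the $n$-uniform ${L}^{2}(0,T;V)$ bound (available pathwise, since a weakly convergent sequence is bounded) but only the ${L}^{2}$-smallness of the tail of $\nabla \psi $. Your ordering of the two limits — $n\to \infty $ first at fixed $R$, then $R\to \infty $ — is exactly the right way to close the argument, and is the analogue of the paper's passage to the $\limsup $ before letting $\eps \to 0$.
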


\bigskip  \noindent
\begin{proof}
Let us fix $s,t \in [0,T]$, $s\le t$ and  $\psi \in U$. By (\ref{E:Skorokhod_appl}) we know that
\begin{equation}  \label{E:tun_conv_Z}
 \tun \to \tu \mbox{ in } \ccal ([0,T];U')
  \cap {L}^{2}_{w}(0,T;V) \cap {L}^{2}(0,T;{H}_{loc}) \cap \ccal ([0,T];{H}_{w}),
    \quad  \tp \mbox{ - a.s.}
\end{equation}
Thus $\tun \to \tu $ in $\ccal ([0,T], {H}_{w})$, $\tp $-a.s. and since by (\ref{E:P_n}) $\Pn \psi \to \psi $ in $H$, we infer that assertion (a) holds.

\bigskip  \noindent
Let us move to (b).
Since by (\ref{E:tun_conv_Z})  $\tun \to \tu $ in $ {L}^{2}_{w} (0,T; V )$, $\tp $-a.s.  and by assertion  (iii) in Lemma \ref{L:P_n|U} (c)
$\Pn \psi \to \psi $ in $V$, by (\ref{E:Acal_ilsk_Dir})
 we infer that $\tp $ - a.s.
$$
  \int_{s}^{t} \dual{ \acal \tun (\sigma) }{\Pn \psi }{} \, d\sigma =
  \int_{s}^{t} \dirilsk{\tun (\sigma)}{\Pn \psi }{} \, d\sigma
  \underset{\ninf }{\longrightarrow } \int_{s}^{t} \dirilsk{\tu (\sigma)}{\psi }{} \, d\sigma
  =\int_{s}^{t} \dual{ \acal \tu (\sigma) }{ \psi }{} \, d\sigma ,
$$
i.e. (b) holds.

\bigskip  \noindent
We will prove now assertion (c).
Since as above $\tun (\cdot ,\omega)\to \tu (\cdot ,\omega)$
in ${L}^{2}_{w}(0,T;V)$, in particular, $\tu (\cdot ,\omega) \in {L}^{2}(0,T;V)$ and the sequence
$(\tun (\cdot ,\omega ){)}_{n\ge 1 }$ is bounded in ${L}^{2}(0,T;V)$ for $\tp $-almost all $\omega \in \tOmega $. Thus $\tu (\cdot ,\omega) \in {L}^{2}(0,T;H)$ and the sequence
$(\tun (\cdot ,\omega ){)}_{n\ge 1 }$ is bounded in ${L}^{2}(0,T;H)$, as well.
Let us fix $\omega \in \tOmega $ such that
\begin{description}
\item[(i) ] $\tun (\cdot ,\omega)\to \tu (\cdot ,\omega)$ in ${L}^{2}(0,T,{H}_{loc})\cap \ccal ([0,T];U')$,
\item[(ii)] $\tu (\cdot ,\omega )\in {L}^{2}(0,T;H)$ and the sequence $(\tun (\cdot ,\omega ){)}_{n\ge 1 }$ is bounded in ${L}^{2}(0,T;H)$.
\end{description}
By (i)  the sequence
$(\tun (\cdot ,\omega ){)}_{n \ge 1 }$ is bounded in $\ccal ([0,T];U') $, i.e. for some $N >0 $
$$
    \sup_{n \ge 1} \norm{\tun (\cdot ,\omega ) }{\ccal ([0,T];U')}{} \le N .
$$
Thus ${\chi }_{n}(\tun (\cdot , \omega )) = \tun (\cdot ,\omega )$ for all $n > N $ and
$$
  B\bigl( {\chi }_{n}(\tun (\cdot , \omega )),\tun (\cdot ,\omega )\bigr)
 =  B\bigl( \tun (\cdot , \omega ),\tun (\cdot ,\omega )\bigr) \quad \mbox{for } n >N.
$$
Hence assertion (c) follows from  Corollary B.2.
This completes the proof of the lemma.
\end{proof}

\bigskip
\begin{lemma} \label{L:conv_martingale}
For all $ s,t \in [0,T] $ such that $s \le t$ and all $ \psi \in U$:
$$
\nlim \e \bigl[ \dual{\tMn (t)-\tMn (s) }{\psi }{}\, h \bigl( \tun {}_{|[0,s]} \bigr) \bigr]
= \e \bigl[ \dual{ \tM (t)-\tM (s) }{\psi }{} \, h \bigl( \tu {}_{|[0,s]} \bigr) \bigr] .
$$
\end{lemma}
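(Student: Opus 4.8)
The plan is to deduce the stated convergence of expectations from a $\tp$-almost sure convergence of the integrands together with their uniform integrability, the conclusion then following from the Vitali convergence theorem.

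First I would expand the pairing. By the definition (\ref{E:tMn}) of $\tMn$, the term $\Pn\tu(0)$ cancels in the increment $\tMn(t)-\tMn(s)$, so that
$$
\dual{\tMn(t)-\tMn(s)}{\psi}{} = \dual{\tun(t)-\tun(s)}{\psi}{} + \int_s^t \dual{\Pn\acal\tun(\sigma)}{\psi}{}\,d\sigma + \int_s^t \dual{\Bn(\tun(\sigma))}{\psi}{}\,d\sigma - \int_s^t \dual{\Pn f(\sigma)}{\psi}{}\,d\sigma .
$$
Using the self-adjointness relation (\ref{E:tP_n-P_n}) of $\Pn$ (valid since each of these elements lies in $H$) every occurrence of $\Pn$ can be shifted onto $\psi$, e.g.\ $\dual{\Pn\acal\tun}{\psi}{}=\dual{\acal\tun}{\Pn\psi}{}$ and $\dual{\tun(t)}{\psi}{}=\ilsk{\tun(t)}{\Pn\psi}{H}$, so that the four resulting expressions are exactly those treated in Lemma \ref{L:pointwise_conv}. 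For the nonlinear term one moreover uses, as in the proof of Lemma \ref{L:pointwise_conv}(c), that for $\tp$-almost every $\omega$ and all $n$ large one has $\chi_n(\tun)=\tun$, whence $\dual{\Bn(\tun)}{\psi}{}=\dual{B(\tun)}{\Pn\psi}{}$.

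Second I would establish the almost sure convergence of the whole integrand. Since $\tun\to\tu$ in $\ccal([0,T];U^{\prime})$ by (\ref{E:Skorokhod_appl}), the restrictions converge in $\ccal([0,s];U^{\prime})$ and, as $h$ is continuous and bounded, $h(\tun {}_{|[0,s]})\to h(\tu {}_{|[0,s]})$, $\tp$-a.s. Combining this with parts (a), (b), (c) of Lemma \ref{L:pointwise_conv} (and with the convergence of the deterministic forcing term, which follows from $\Pn\psi\to\psi$ in $V$) and recalling (\ref{E:tM}), I obtain
$$
\dual{\tMn(t)-\tMn(s)}{\psi}{}\,h(\tun {}_{|[0,s]}) \;\underset{\ninf}{\longrightarrow}\; \dual{\tM(t)-\tM(s)}{\psi}{}\,h(\tu {}_{|[0,s]}), \qquad \tp\text{-a.s.}
$$

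Third, and this is the crux, I would prove that the sequence $\bigl(\dual{\tMn(t)-\tMn(s)}{\psi}{}\,h(\tun {}_{|[0,s]})\bigr)_n$ is uniformly integrable by bounding it in $L^{r}(\tOmega)$ for some $r>1$. As $h$ is bounded it suffices to bound the pairing term by term. The increment, drift and forcing contributions are controlled in $L^{2}(\tOmega)$: using ${|\tun|}_{U^{\prime}}\le c{|\tun|}_{H}$, the bound ${|\acal\tun|}_{V^{\prime}}\le\norm{\tun}{}{}$, the continuity of $V^{\prime}\hookrightarrow U^{\prime}$, the H\"older inequality in time, and the boundedness of $\Pn\psi$ in $V$, they are dominated by constants times $\sup_\sigma{|\tun(\sigma)|}_{H}$ and $\bigl(\int_0^T\norm{\tun(\sigma)}{}{2}\,d\sigma\bigr)^{1/2}$, which are bounded in $L^{2}(\tOmega)$ by the uniform estimates (\ref{E:H_estimate'})--(\ref{E:V_estimate'}). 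The nonlinear term is the delicate one: from ${|\Bn(\tun)|}_{U^{\prime}}\le c{|B(\chi_n(\tun),\tun)|}_{{V}_{s}^{\prime}}\le c\norm{B}{}{}\,{|\tun|}_{H}^{2}$ (here ${|\chi_n(u)|}_{H}\le{|u|}_{H}$) and the boundedness of $\Pn\psi$ in ${V}_{s}$, it is dominated by a constant times $\sup_\sigma{|\tun(\sigma)|}_{H}^{2}$, which is bounded in $L^{p/2}(\tOmega)$ with $p/2>1$ precisely by the higher-moment estimate (\ref{E:H_estimate'}) for some $p>2$ satisfying (\ref{E:p_cond}). Choosing $r\in(1,p/2]$ therefore yields the required uniform $L^{r}$-bound. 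Finally, almost sure convergence together with uniform integrability gives convergence in $L^{1}(\tOmega)$, hence the asserted convergence of expectations. I expect the main obstacle to be precisely this uniform integrability of the nonlinear term: it is here that one must invoke the strictly super-quadratic moment bound (\ref{E:H_estimate'}), and hence where the parameter $\eta>0$ in assumption (G) is genuinely used.
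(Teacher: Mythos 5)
Your proof is correct, and its skeleton (decomposition of $\dual{\tMn(t)-\tMn(s)}{\psi}{}$ via (\ref{E:tP_n-P_n}), pointwise a.s.\ convergence from Lemma \ref{L:pointwise_conv}, uniform integrability, Vitali) coincides with the paper's. The one place where you genuinely diverge is the uniform integrability step, which you rightly identify as the crux. The paper does not estimate the four terms of the decomposition separately: it exploits the fact that $\tMn$ is a continuous square integrable martingale with quadratic variation (\ref{E:tMn_qvar}), applies the Burkholder--Davis--Gundy inequality to get $\te \bigl[ \sup_{t}|\tMn(t)|_{H}^{2}\bigr] \le c\, \te \bigl[ \int_{0}^{T}\norm{\Pn G(\tun(\sigma))}{\lhs (Y,H)}{2}\,d\sigma \bigr]$, and then bounds the integrand by $(2-\eta)\norm{\tun}{}{2}+{\lambda }_{0}|\tun|_{H}^{2}+\rho$ using (G), so that (\ref{E:V_estimate'}) and (\ref{E:H_estimate'}) with $p=2$ already give a uniform $L^{2}(\tOmega )$ bound on $f_{n}$. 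Your route instead treats $\tMn$ purely through its drift decomposition; this forces you to control the nonlinear term by $\sup_{\sigma}|\tun(\sigma)|_{H}^{2}$, which is only bounded in $L^{p/2}(\tOmega )$, so you must invoke the super-quadratic moment estimate with some $p>2$ from (\ref{E:p_cond}) and settle for $L^{r}$-boundedness with $r\in(1,p/2]$. Both arguments close, but note that your concluding remark is specific to your proof: in the paper's version of this lemma the bound $p=2$ suffices and the strict positivity of $\eta $ is not needed here (it is needed, in exactly the way you describe, in the companion Lemma \ref{L:qvar_conv_left}, where products of two pairings require $2r$-th moments). What your approach buys is independence from the martingale property of $\tMn$ and from BDG; what the paper's buys is a stronger ($L^{2}$) uniform bound obtained more cheaply.
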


\bigskip
\begin{proof}
Let us fix $s,t \in [0,T]$, $s \le t$ and  $\psi \in U$.
By (\ref{E:tP_n-P_n}) we have
\begin{eqnarray*}
& &\dual{\tMn (t)-\tMn (s)}{\psi }{}
  =  \ilsk{\tun (t)}{\Pn \psi }{H} - \ilsk{\tun (s)}{\Pn \psi }{H}
    + \int_{s}^{t} \dual{ \acal \tun (\sigma) }{\Pn \psi }{} \, d\sigma
     \\
 &  & +\int_{s}^{t} \dual{ B \bigl( {\chi }_{n}(\tun (\sigma  )),\tun (\sigma ) \bigr)  }{\Pn \psi }{}\, d\sigma
      +\int_{s}^{t} \dual{ f(\sigma) }{\Pn \psi }{} \, d\sigma .
\end{eqnarray*}
By Lemma \ref{L:pointwise_conv}, we infer that
\begin{equation} \label{E:mart_pointwise_conv}
   \lim_{n \to \infty }  \dual{\tMn (t)-\tMn (s)}{\psi }{} = \dual{\tM (t)-\tM (s)}{\psi }{},
   \quad  \tp \mbox{ - a.s.}
\end{equation}
Let us notice that $\tp $ - a.s. $\lim_{n \to \infty }h(\tun {}_{|[0,s]} ) =h( \tu {}_{|[0,s]})$
and $\sup_{n \in \nat } \norm{h( \tun {}_{|[0,s]})}{{L}^{\infty }}{} < \infty $.
Let us denote
\begin{equation*}
   {f}_{n}(\omega ) := \bigl( \dual{\tMn (t, \omega )}{\psi }{} - \dual{\tMn (s, \omega )}{\psi }{} \bigr)
    \, h \bigl( \tun {}_{|[0,s]} \bigr) , \qquad \omega \in \tOmega .
\end{equation*}
We will prove that the functions $\{ {f}_{n} {\} }_{n \in \nat }$ are uniformly integrable.
We claim  that
\begin{equation} \label{E:mart_uniform_int}
     \sup_{n \ge 1}  \te \bigl[ {| {f}_{n} |}^{2} \bigr] < \infty.
\end{equation}
Indeed, by the continuity of the embedding $U\hookrightarrow H$ and the Schwarz inequality, for each $n \in \nat $ we have
\begin{eqnarray}  \label{E:mart_uniform_int_1}
\te \bigl[ {| {f}_{n}|}^{2} \bigl] \le 2c\norm{h}{{L}^{\infty }}{2} \norm{\psi }{U}{2}
\te \bigl[ |\tMn (t){|}_{H}^{2}) + |\tMn (s){|}_{H}^{2} \bigr] .
\end{eqnarray}
Since $\tMn  $  is a continuous martingale with quadratic variation defined in (\ref{E:tMn_qvar}), by the Burkhol\-der-Davis-Gundy inequality we obtain
\begin{eqnarray} \label{E:mart_BDG_est}
\te  \bigl[ \sup_{t \in [0,T]} {| \tMn (t) | }^{2}\bigr]
\le c \te  \Bigl[ \Bigl( \int_{0}^{T}\norm{\Pn G(\tun (\sigma ))}{\lhs (Y,H)}{2}  \, d\sigma {\Bigr) }^{\frac{1}{2}} \Bigr] .
\end{eqnarray}
Since the restriction of $\Pn $ to $H$ is an orthogonal projection onto ${H}_{n}$,
by inequality (G) in assumption (A.3), we have
\begin{eqnarray} \label{E:mart_BDG_est_1}
\norm{ \Pn G(\tun (\sigma ))}{\lhs (Y,H)}{2} \le (2-\eta ) \norm{\tun (\sigma )}{}{2}
 + {\lambda }_{0} |\tun (\sigma ){|}_{H}^{2} + \rho , \qquad \sigma \in [0,T].
\end{eqnarray}
By (\ref{E:mart_BDG_est}), (\ref{E:mart_BDG_est_1}), (\ref{E:V_estimate'}) and (\ref{E:H_estimate'}), we infer that
\begin{eqnarray} \label{E:mart_uniform_int_2}
   \sup_{n \in \nat } \te  \bigl[ \sup_{t \in [0,T]} {| \tMn (t) | }^{2}\bigr] < \infty .
\end{eqnarray}
Then by (\ref{E:mart_uniform_int_1}) and (\ref{E:mart_uniform_int_2}) we see that (\ref{E:mart_uniform_int}) holds.
Since the sequence $\{ {f}_{n} {\} }_{n \in \nat }$ is uniformly integrable and by
(\ref{E:mart_pointwise_conv}) it is $\tp $-a.s. pointwise convergent, application of the Vitali Theorem
completes the proof of the Lemma.
\end{proof}

\bigskip
\begin{lemma} \label{L:qvar_conv_left}
For all $s,t \in [0,T]$ such that $s \le t$ and all $\psi ,\zeta \in U$:
\begin{eqnarray*}
 \lim_{n\to\infty }
 \e \Bigl[ \bigl\{  \dual{\tMn (t)}{\psi }{} \dual{\tMn (t)}{\zeta }{}
 - \dual{\tMn (s)}{\psi }{} \dual{\tMn (s)}{\zeta }{} \bigr\} \, h \bigl( \tun {}_{|[0,s]} \bigr)\Bigr] & & \\
= \e \Bigl[ \bigl\{  \dual{\tM (t)}{\psi }{} \dual{\tM (t)}{\zeta }{}
 - \dual{\tM (s)}{\psi }{} \dual{\tM (s)}{\zeta }{} \bigr\} \, h \bigl( \tu {}_{|[0,s]}\bigr) \Bigr] . & &
 \end{eqnarray*}
\end{lemma}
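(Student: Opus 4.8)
The plan is to mirror the proof of Lemma \ref{L:conv_martingale}: establish $\tp$-almost sure pointwise convergence of the bracketed expression and then pass to the limit under the expectation by the Vitali theorem. The essential new difficulty is that the quantity inside the expectation is now \emph{quadratic} in the martingales $\tMn$, so a uniform $L^2$ bound on $\tMn$ (which sufficed in Lemma \ref{L:conv_martingale}) is no longer enough, and I will have to produce a priori moment estimates of $\tMn$ of order strictly greater than $2$.

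Fix $s\le t$ and $\psi ,\zeta \in U$ and set
\[
  {g}_{n}:=\bigl\{ \dual{\tMn (t)}{\psi }{}\,\dual{\tMn (t)}{\zeta }{}-\dual{\tMn (s)}{\psi }{}\,\dual{\tMn (s)}{\zeta }{}\bigr\}\,h\bigl( \tun {}_{|[0,s]}\bigr).
\]
First I would prove pointwise convergence. Applying Lemma \ref{L:pointwise_conv} term by term to the defining formula (\ref{E:tMn}) (with lower limit $0$ and upper limit $t$, and then $s$), together with $\Pn \tu (0)\to \tu (0)$ in $H$, one obtains, exactly as in (\ref{E:mart_pointwise_conv}), that $\tp$-a.s.
\[
  \dual{\tMn (t)}{\psi }{}\to \dual{\tM (t)}{\psi }{},\qquad \dual{\tMn (s)}{\psi }{}\to \dual{\tM (s)}{\psi }{},
\]
and likewise with $\zeta $ in place of $\psi $. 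Since products of $\tp$-a.s. convergent sequences converge and $h\bigl( \tun {}_{|[0,s]}\bigr)\to h\bigl( \tu {}_{|[0,s]}\bigr)$ $\tp$-a.s., the integrand $g_n$ converges $\tp$-a.s. to the corresponding expression formed from $\tM $ and $\tu $.

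The remaining, and main, task is uniform integrability of $\{ g_n\} $. Choose $p$ with $2<p<2+\frac{\eta }{2-\eta }$ if $\eta \in (0,2)$, or any $p>2$ if $\eta =2$; such a $p$ satisfies (\ref{E:p_cond}), and put $\delta :=\frac{p}{2}-1>0$. Since $U\hookrightarrow H$ continuously and $h$ is bounded,
\[
  |g_n|\le c\,\norm{h}{{L}^{\infty }}{}\,\norm{\psi }{U}{}\,\norm{\zeta }{U}{}\bigl( |\tMn (t){|}_{H}^{2}+|\tMn (s){|}_{H}^{2}\bigr),
\]
so that $\te\bigl[ |g_n{|}^{1+\delta }\bigr]\le C\,\te\bigl[ \sup_{r\in[0,T]}|\tMn (r){|}_{H}^{p}\bigr]$. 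To bound the last expectation uniformly in $n$, I would apply the Burkholder--Davis--Gundy inequality to the martingale $\tMn $, whose quadratic variation is given by (\ref{E:tMn_qvar}):
\[
  \te\bigl[ \sup_{r\in[0,T]}|\tMn (r){|}_{H}^{p}\bigr]\le {C}_{p}\,\te\Bigl[ \Bigl( \int_{0}^{T}\norm{\Pn G(\tun (\sigma ))}{\lhs (Y,H)}{2}\,d\sigma \Bigr)^{p/2}\Bigr].
\]
By (\ref{E:mart_BDG_est_1}) the inner integral is dominated by $(2-\eta )\int_{0}^{T}\norm{\tun }{}{2}\,d\sigma +{\lambda }_{0}\int_{0}^{T}|\tun {|}_{H}^{2}\,d\sigma +\rho T$. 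The contribution of $\int_{0}^{T}|\tun {|}_{H}^{2}\,d\sigma $ is at most ${T}^{p/2}\sup_{r}|\tun (r){|}_{H}^{p}$, whose expectation is controlled by (\ref{E:H_estimate'}); the contribution of $\int_{0}^{T}\norm{\tun }{}{2}\,d\sigma $ needs its $\frac{p}{2}$-th moment, which is bounded uniformly in $n$ as part of the Galerkin a priori estimates (obtained by raising the energy identity to the power $\frac{p}{2}$, as in Appendix A, and using $\eta >0$). This gives $\sup_{n}\te[ |g_n{|}^{1+\delta }]<\infty $, hence uniform integrability, and the Vitali theorem then yields $\te[g_n]\to \te[g]$, which is the assertion. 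The hard part is precisely this moment bound: the quadratic form of $g_n$ forces a moment of $\tMn $ of order $p>2$, so the uniform $L^2$-control used in Lemma \ref{L:conv_martingale} must be upgraded to $L^{p}$, which is available only because $\eta >0$ gives the strengthened estimate on $\int_{0}^{T}\norm{\tun }{}{2}\,d\sigma $.
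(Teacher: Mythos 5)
Your overall strategy (a.s.\ pointwise convergence via Lemma \ref{L:pointwise_conv} followed by uniform integrability and the Vitali theorem) is exactly the paper's, and your pointwise-convergence step is fine. The divergence — and the gap — is in the uniform integrability step. You bound $|g_n|$ by $|\tMn (t)|_{H}^{2}+|\tMn (s)|_{H}^{2}$ and then apply Burkholder--Davis--Gundy to $\tMn $ \emph{as an $H$-valued martingale}, so that the quadratic variation is measured in $\lhs (Y,H)$ and, via (\ref{E:mart_BDG_est_1}), contains the term $(2-\eta )\int_{0}^{T}\norm{\tun }{}{2}\,d\sigma $. You therefore need $\sup_{n}\te \bigl[ \bigl( \int_{0}^{T}\norm{\tun (\sigma )}{}{2}\,d\sigma \bigr) ^{p/2}\bigr] <\infty $ for some $p>2$, and you assert that this is available ``as in Appendix A.'' It is not: Appendix A (Lemma \ref{L:Galerkin_estimates }) provides $\e \bigl[ \sup_{t}|\un (t)|_{H}^{p}\bigr] $, $\e \bigl[ \int_{0}^{T}|\un |_{H}^{p-2}\norm{\un }{}{2}\,ds\bigr] $ and $\e \bigl[ \int_{0}^{T}\norm{\un }{}{2}\,ds\bigr] $, none of which dominates the $\tfrac{p}{2}$-th moment of $\int_{0}^{T}\norm{\un }{}{2}\,ds$. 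That moment bound is in fact true for $p<2+\tfrac{\eta }{2-\eta }$, but proving it requires a separate argument (raise the energy inequality for $|\un |_{H}^{2}$ to the power $p/2$, estimate the stochastic integral by BDG, and absorb the resulting $(2-\eta )^{p/2}\e [(\int \norm{\un }{}{2})^{p/2}]$ term by Young's inequality); this is precisely the ``hard part'' you flag, and as written it is asserted rather than proved, with an incorrect reference.

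The paper sidesteps this entirely: it estimates $\tMn $ as a $U^{\prime }$-valued martingale, so that by the continuity of $U\hookrightarrow V$ and Lemma \ref{L:P_n|U} the quadratic variation is controlled by $\norm{\Pn G(\tun (\sigma ))}{\lhs (Y,U^{\prime })}{2}\le c\norm{G(\tun (\sigma ))}{\lhs (Y,V^{\prime })}{2}\le C(1+|\tun (\sigma )|_{H}^{2})$ via assumption (\ref{E:G*}), with no gradient term at all. Then $\bigl( \int_{0}^{T}(1+|\tun |_{H}^{2})\,d\sigma \bigr) ^{r}\le C(1+\sup_{\sigma }|\tun (\sigma )|_{H}^{2r})$, whose expectation is finite by (\ref{E:H_estimate'}) because $2r$ satisfies (\ref{E:p_cond}). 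So the paper's route uses only estimates already established, at the mild cost of pairing $\tMn $ with $\psi ,\zeta $ through the $U^{\prime }$--$U$ duality rather than the $H$ inner product. To repair your proof you should either switch to the $U^{\prime }$ (or $V^{\prime }$) norm of $\tMn $ and invoke (\ref{E:G*}) as above, or else supply the missing higher-moment energy estimate in full.
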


\bigskip
\begin{proof}
Let us fix $s,t \in [0,T]$ such that $s \le t$ and  $\psi ,\zeta \in U$ and let us denote
\begin{eqnarray*}
  & & {f}_{n}(\omega ) := \bigl\{  \dual{\tMn (t,\omega )}{\psi }{} \dual{\tMn (t, \omega )}{\zeta }{}
 - \dual{\tMn (s, \omega )}{\psi }{} \dual{\tMn (s, \omega )}{\zeta }{} \bigr\} \, h \bigl( \tun {}_{|[0,s]}(\omega )\bigr) ,\\
  & & f(\omega ) := \bigl\{  \dual{\tM (t,\omega )}{\psi }{} \dual{\tM (t, \omega )}{\zeta }{}
 - \dual{\tM (s, \omega )}{\psi }{} \dual{\tM (s, \omega )}{\zeta }{} \bigr\} \, h \bigl( \tu {}_{|[0,s]}(\omega )\bigr),
\qquad \omega \in \tOmega .
\end{eqnarray*}
By Lemma \ref{L:pointwise_conv}, we infer that
$\lim_{n \to \infty } {f}_{n} (\omega ) = f(\omega )$ for  $\tp $-almost all $\omega \in \tOmega $.

\bigskip  \noindent
We will prove that the functions $\{ {f}_{n} {\} }_{n \in \nat }$ are uniformly integrable.
To this end, it is sufficient to show that for some $r>1$,
\begin{equation} \label{E:uniform_int}
     \sup_{n \ge 1}  \e \bigl[ {| {f}_{n} |}^{r} \bigr] < \infty.
\end{equation}
In fact, we will show that condition (\ref{E:uniform_int}) holds for any $r \in \bigl(1 , 1+ \frac{\eta }{2(2-\eta )}\bigr) $ if $0 < \eta < 2$
and any  $r>1$ if $\eta =2$. Indeed,
for each $n \in \nat $ we have
\begin{eqnarray} \label{E:uniform_int_1}
& &  \e \bigl[ |{f}_{n}{|}^{r} \bigr]
\le C\norm{h}{{L}^{\infty }}{r}\norm{\psi }{U}{r} \norm{\eta }{U}{r} \e \bigl[ {| \tMn (t) | }^{2r}+ {| \tMn (s) | }^{2r}\bigr] .
\end{eqnarray}
Since $\tMn  $ is a continuous martingale with quadratic variation defined in (\ref{E:tMn_qvar}), by the Burkhol\-der-Davis-Gundy inequality we obtain
\begin{eqnarray} \label{E:BDG_est}
\e \bigl[ \sup_{t \in [0,T]}{| \tMn (t) | }^{2r}\bigr]
\le c \e \Bigl[ \Bigl( \int_{0}^{T} \norm{\Pn G(\tun (\sigma ))}{\lhs (Y,U')}{2}  \, d\sigma {\Bigr) }^{r} \Bigr]
\end{eqnarray}
By the continuity of the injection  $U\embed V_s$, Lemma \ref{L:P_n|U} and assumption  \eqref{E:G*}
$$
   \norm{\Pn G(\tun (\sigma ))}{\lhs (Y,U')}{2}
\le c{|\Pn |}_{\lcal (U,V)} \norm{ G(\tun (\sigma ))}{\lhs (Y,V')}{2}
\le C ({|\tun (\sigma ) |}_{H}^{2}+1 ), \qquad \sigma \in [0,T].
$$
Since $2r$ satisfies condition (\ref{eqn-p_cond}), by (\ref{E:H_estimate'}) we obtain the following inequalities
\begin{eqnarray}
 & & \e \Bigl[ \Bigl( \int_{0}^{T} \norm{\Pn G(\tun (s))}{\lhs (Y,U')}{2}  \, ds{\Bigr) }^{r} \Bigr]
 \le \tilde{C} \e \Bigl[ \Bigl( \int_{0}^{T}({|\tun (s) |}_{H}^{2}+1 )\, ds {\Bigr) }^{r} \Bigr] \nonumber \\
& & \le c \Bigl( \e \bigl[ \sup_{s\in [0,T]} {|\tun (s) |}_{H}^{2r} \bigr] +1 \Bigr)
 \le c \bigl( {C}_{1}(2r) + 1 \bigr) . \label{E:BDG_est_1}
\end{eqnarray}
By (\ref{E:BDG_est}), (\ref{E:BDG_est_1}) and (\ref{E:uniform_int_1}) we infer that
  condition (\ref{E:uniform_int}) holds. By the Vitali Theorem
$$
   \lim_{n\to \infty } \te \bigl[ {f}_{n} \bigr] = \te [f].
$$
The proof of the lemma is thus complete.
\end{proof}

\bigskip
\begin{lemma}  \label{L:conv_quadr_var}
\bf (Convergence in quadratic variation). \rm
For any $s,t \in [0,T]$ and $\psi ,\zeta \in U$, we have
\begin{eqnarray*}
 \lim_{n \to \infty } \e \biggl[  \biggl(
  \int_{s}^{t} \ilsk{{G(\tun (\sigma ))}^\ast \Pn \psi }{{G(\tun (\sigma ))}^\ast \Pn \zeta }{Y} \, d\sigma  \biggl)
   \cdot h \bigl( \tun {}_{|[0,s]} \bigr) \biggr] & & \\
 = \e \biggl[  \biggl(
   \int_{s}^{t} \ilsk{{G(\tu (\sigma ))}^\ast  \psi }{{G(\tu (\sigma ))}^\ast  \zeta }{Y} \, d\sigma  \biggl)
  \cdot h \bigl( \tu {}_{|[0,s]} \bigr) \biggr] . & &
\quad \end{eqnarray*}
\end{lemma}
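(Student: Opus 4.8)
The plan is to repeat the three-step scheme already used in Lemmas \ref{L:conv_martingale} and \ref{L:qvar_conv_left}: first prove $\tp $-almost sure convergence of the integrands, then establish their uniform integrability, and finally conclude by the Vitali convergence theorem. I set
$$
  {F}_{n}(\omega ) := \Bigl( \int_{s}^{t} \ilsk{{G(\tun (\sigma ))}^\ast \Pn \psi }{{G(\tun (\sigma ))}^\ast \Pn \zeta }{Y} \, d\sigma \Bigr) \cdot h \bigl( \tun {}_{|[0,s]} \bigr)
$$
and let $F$ denote the analogous expression with $\tun $ replaced by $\tu $ and $\Pn \psi ,\Pn \zeta $ by $\psi ,\zeta $; the assertion is $\nlim \te [{F}_{n}]=\te [F]$. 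Note that, unlike in Lemma \ref{L:qvar_conv_left}, no martingale and hence no Burkholder--Davis--Gundy inequality enters here, since for fixed $\omega $ the quantity is a deterministic time integral.

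For the pointwise convergence I would fix $\omega $ in the full-measure set on which $\tun \to \tu $ in every component of $\zcal $ and $\tu (\cdot ,\omega )\in {L}^{2}(0,T;H)$. On this set the convergence in $\ccal ([0,T];{H}_{w})$, applied through the Banach--Steinhaus theorem to the functionals $\ilsk{\tun (t,\omega )}{\cdot }{H}$, forces a finite bound $r(\omega ):=\sup_{n}\sup_{t\in [0,T]}|\tun (t,\omega ){|}_{H}<\infty $, and weak lower semicontinuity gives $|\tu (t,\omega ){|}_{H}\le r(\omega )$ as well. Since $\tun (\cdot ,\omega )\to \tu (\cdot ,\omega )$ in ${L}^{2}(0,T;{H}_{loc})$, a diagonal extraction yields a subsequence along which $\tun (\sigma ,\omega )\to \tu (\sigma ,\omega )$ in ${H}_{loc}$ for almost every $\sigma $. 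The key point is to pass to the limit inside ${G(\cdot )}^{\ast }$. Assumption (\ref{E:G**}) provides continuity of $H\ni u\mapsto {G(u)}^{\ast }\varphi \in Y$ in the ${L}^{2}_{loc}$-topology for $\varphi \in \vcal $; combining this with the bound (\ref{E:G*}) and the density of $\vcal $ in $V$, a standard three-$\eps $ argument extends the continuity, uniformly on $H$-balls, to all $\varphi \in V$, hence to $\varphi \in U$. Splitting
$$
 {G(\tun (\sigma ))}^{\ast }\Pn \psi - {G(\tu (\sigma ))}^{\ast }\psi
 = {G(\tun (\sigma ))}^{\ast }(\Pn \psi - \psi ) + \bigl( {G(\tun (\sigma ))}^{\ast } - {G(\tu (\sigma ))}^{\ast } \bigr) \psi ,
$$
the first term is controlled by $\sqrt{C(1+{r(\omega )}^{2})}\, \norm{\Pn \psi - \psi }{V}{}\to 0$ via (\ref{E:G*}) and Lemma \ref{L:P_n|U}(c), while the second tends to $0$ by the extended continuity; the same holds for $\zeta $. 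Hence the integrand converges for almost every $\sigma $, and since $|\tun (\sigma ,\omega ){|}_{H}\le r(\omega )$ it is bounded by the $n$- and $\sigma $-independent constant $C(1+{r(\omega )}^{2}){c}^{2}\norm{\psi }{U}{}\norm{\zeta }{U}{}$, so dominated convergence on $[s,t]$ gives convergence of the integrals along the subsequence; the Urysohn subsequence principle upgrades this to the whole sequence. Together with $h(\tun {}_{|[0,s]})\to h(\tu {}_{|[0,s]})$ this yields ${F}_{n}\to F$, $\tp $-a.s.

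For uniform integrability I would bound, using Cauchy--Schwarz in $Y$, (\ref{E:G*}) and $\norm{\Pn \psi }{V}{}\le c\norm{\psi }{U}{}$,
$$
  |{F}_{n}| \le \norm{h}{{L}^{\infty }}{} {c}^{2} \norm{\psi }{U}{}\norm{\zeta }{U}{}
  \int_{0}^{T} C\bigl( 1 + |\tun (\sigma ){|}_{H}^{2} \bigr) \, d\sigma ,
$$
so that for $r>1$ one obtains $\te [|{F}_{n}{|}^{r}]\le C^{\prime }\bigl( 1 + \te [\sup_{\sigma \in [0,T]}|\tun (\sigma ){|}_{H}^{2r}]\bigr)$. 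Choosing $r\in \bigl(1,1+\tfrac{\eta }{2(2-\eta )}\bigr)$ when $\eta <2$ and any $r>1$ when $\eta =2$, the exponent $2r$ satisfies (\ref{E:p_cond}), whence $\sup_{n}\te [|{F}_{n}{|}^{r}]<\infty $ by (\ref{E:H_estimate'}), exactly as in Lemma \ref{L:qvar_conv_left}; the Vitali theorem then gives $\te [{F}_{n}]\to \te [F]$. I expect the main obstacle to be the pointwise step, and specifically the justification that ${G(\tun (\sigma ))}^{\ast }\Pn \psi \to {G(\tu (\sigma ))}^{\ast }\psi $ in $Y$ for test functions $\psi \in U$ rather than $\psi \in \vcal $: this is precisely where (\ref{E:G**}), which is essential in the unbounded domain, must be combined with (\ref{E:G*}) and the density of $\vcal $ in $V$, and where the $\omega $-uniform bound $r(\omega )$ is needed to dominate the $\sigma $-integral.
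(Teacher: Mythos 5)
Your proposal is correct and follows essentially the same route as the paper: pointwise a.s. convergence of the integrands (splitting off the $\Pn\psi-\psi$ term via (G$^\ast$), handling $\psi\in\vcal$ by (G$^{\ast\ast}$) together with a.e.-in-$\sigma$ convergence in ${H}_{loc}$, and extending to $\psi\in V$ by density and an $\eps$-approximation), combined with uniform integrability in ${L}^{r}$ for $2r$ satisfying (\ref{E:p_cond}) and the Vitali theorem. The only cosmetic difference is that you dominate the time integral by the a.s. finite bound $r(\omega)=\sup_{n}\sup_{t}|\tun(t,\omega){|}_{H}$ obtained from the $\ccal([0,T];{H}_{w})$-convergence via Banach--Steinhaus, whereas the paper uses the boundedness of $(\tun(\cdot,\omega))$ in ${L}^{2}(0,T;H)$ and works directly with convergence in ${L}^{2}(s,t;Y)$; both are valid.
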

\noindent
Let us recall that $Y$ is the Hilbert space defined in assumption (A.1) in Section \ref{S:Navier_Stokes}.

\bigskip \noindent
\begin{proof}
Let us fix $\psi , \zeta \in U $ and let us denote
$$
  \fn (\omega ):=  \biggl(
 \int_{s}^{t} \Ilsk{{G(\tun (\sigma , \omega ))}^\ast \Pn \psi }{{G(\tun (\sigma ,\omega ))}^\ast \Pn \zeta }{Y}
 \, d\sigma  \biggl)  \cdot h \bigl( \tun {}_{|[0,s]} \bigr) ,
\qquad \omega \in \tOmega .
$$
We will prove that the functions are uniformly integrable and convergent $\tp $-a.s.

\bigskip  \noindent
\bf Uniform integrability. \rm
It is sufficient to show that for some $r>1$
\begin{equation} \label{E:uniform_bound_f_n}
     \sup_{n \ge 1}  \e \bigl[ {|\fn |}^{r} \bigr] < \infty .
\end{equation}
We will prove that the above condition holds for every
$r \in \bigl(1 , 1+ \frac{\eta }{2(2-\eta )}\bigr) $ if $0 < \eta < 2$
and with every $r>1$ if $\eta =2$.

\bigskip  \noindent
Since $ \lhs (Y, {V}_{}^{\prime }) \hookrightarrow \lcal (Y, {V}_{}^{\prime }) $ and $U\hookrightarrow V$ continuously, then by (G$^\ast $) and Lemma \ref{L:P_n|U} we have
the following inequalities
$$
   \bigl\| { G(\tun (\sigma ,\omega )) }^\ast  \Pn \zeta  {\bigl\| }_{Y}^{}
   \le \norm{G(\tun (\sigma ,\omega )) }{\lcal (Y, {V}_{}^{\prime }) }{}
   \cdot \norm{\Pn \zeta }{{V}_{}}{} \le
 c \sqrt{  \bigl( | \tun (\sigma , \omega ) {|}_{H}^{2} + 1 \bigr)} \,  \norm{\zeta }{U}{}
$$
for some $c>0$.
Thus we have the following inequalities
\begin{eqnarray*}
& & {|\fn |}^{r}   =
{\Bigl| \biggl(
 \int_{s}^{t} \Ilsk{{G(\tun (\sigma , \omega ))}^\ast \Pn \psi }{{G(\tun (\sigma ,\omega ))}^\ast \Pn \zeta }{Y}
  \, d\sigma  \biggl)  \cdot h \bigl( \tun {}_{|[0,s]} \bigr)  \Bigr| }^{r} \\
& &\le \norm{h}{{L}^{\infty }}{r}
  { \biggl(
  \int_{s}^{t} \bigl\| { G(\tun (\sigma , \omega ))}^\ast \Pn \psi {\bigl\| }_{Y}
   \cdot \bigl\| { G(\tun (\sigma ,\omega ))}^\ast  \Pn \zeta  {\bigl\| }_{Y} \, d\sigma   \biggr) }^{r}  \\
& & \le {c}^{2r}\norm{h}{{L}^{\infty }}{r}  \cdot {\| \psi \| }_{U}^{r} \cdot
   {\| \zeta \| }_{U}^{r}  \cdot
  \biggl( \int_{s}^{t}  \bigl( | \tun (\sigma , \omega ) {|}_{H}^{2} + 1 \bigr)  \, d\sigma
  {\biggr) }^{r}   .
\end{eqnarray*}
Using the H\"{o}lder inequality, we obtain
$$
 \biggl( \int_{s}^{t}  \bigl( | \tun (\sigma , \omega ) {|}_{H}^{2} + 1 \bigr)  \, d\sigma {\biggr) }^{r}
  \le  (t-s{)}^{r-1}
        \cdot \int_{s}^{t} \bigl( | \tun (\sigma , \omega ) {|}_{H}^{2} + 1 {\bigr) }^{r} \, d\sigma
\le C \cdot
   \sup_{\sigma \in [0,T]}  \bigl( | \tun (\sigma , \omega ) {|}_{H}^{2r} + 1 \bigr)
$$
for some $C>0$. Thus
$$
 {|\fn |}^{r}
 \le \tilde{C} \cdot \sup_{\sigma \in [0,T]}  \bigl( | \tun (\sigma , \omega ) {|}_{H}^{2r} + 1 \bigr)
$$
for some $\tilde{C}>0$.  Hence  by (\ref{E:H_estimate'})
$$
\sup_{n\in \nat }\e \bigl[ {|\fn |}^{r} \bigr]
 \le \tilde{C} \cdot \e \bigl[ \sup_{\sigma \in [0,T]}|\tun (\sigma ,\omega ){|}_{H}^{2r} + 1 \bigr]
\le \tilde{C} \bigl( {C}_{1}(2r) +1  \bigr) < \infty .
$$
Thus condition (\ref{E:uniform_bound_f_n}) holds.

\bigskip  \noindent
\bf Pointwise convergence on $\tOmega$. \rm
Let us fix $\omega \in \tOmega $ such that
\begin{description}
\item[(i) ] $\tun (\cdot ,\omega) \to \tu (\cdot ,\omega)$ in ${L}^{2}(0,T, {H}_{loc})$,
\item[(ii)]  $\tu (\cdot ,\omega )\in {L}^{2}(0,T;H)$ and the sequence $(\tun (\cdot ,\omega ){)}_{n\ge 1}$ is bounded in ${L}^{2}(0,T;H)$.
\end{description}
We will prove that
$$
\lim_{n \to \infty } \int_{s}^{t} \Ilsk{ {G(\tun (\sigma , \omega ))}^\ast \Pn \psi }{
 {G(\tun (\sigma , \omega  ))}^\ast \Pn \zeta }{Y} \, d\sigma
 =
 \int_{s}^{t} \Ilsk{ {G(\tu (\sigma , \omega ))}^\ast  \psi }{{G(\tu (\sigma , \omega  ))}^\ast  \zeta }{Y} \, d\sigma  .
$$
Let us notice that it is sufficient to prove that
\begin{equation} \label{E:L^2(s,t;Y)_conv}
  {G(\tun (\cdot , \omega  ))}^\ast \Pn \psi  \to {G(\tu (\cdot , \omega  ))}^\ast  \psi
 \quad \mbox{in} \quad {L}^{2}(s,t;Y).
\end{equation}
We have
\begin{eqnarray}
& &\int_{s}^{t}  \bigl\| {G(\tun (\sigma , \omega  ))}^\ast \Pn \psi
 - {G(\tu (\sigma , \omega ))}^\ast  \psi  {\bigr\| }_{Y}^{2}  \, d\sigma  \nonumber \\
& &\le \int_{s}^{t}  \biggl( \bigl\| {G(\tun (\sigma , \omega  ))}^\ast ( \Pn \psi  -\psi ) {\bigr\| }_{Y}
 + \bigl\|  {G(\tun (\sigma , \omega ))}^\ast \psi
- {G(\tu (\sigma , \omega  ))}^\ast  \psi  {\bigr\| }_{Y}^{} {\biggr) }^{2}  \, d\sigma  \nonumber \\
& &\le 2 \int_{s}^{t}   \bigl| {G(\tun (\sigma , \omega  ))}^\ast {\bigl|}_{\lcal (V^{\prime },Y)}^{2}
 \cdot \norm{\Pn \psi  -\psi }{V}{2}  \, d \sigma
 +  2 \int_{s}^{t}
\bigl\|  {G(\tun (\sigma , \omega  ))}^\ast \psi
- {G(\tu (\sigma , \omega ))}^\ast  \psi  {\bigr\| }_{Y}^{2}   \, d\sigma  \nonumber \\
& & =: 2 \{ {I}_{1}(n) + {I}_{2}(n)\}    \label{E:2_(I1+I2)}.
\end{eqnarray}
Let us consider the term ${I}_{1}(n)$. Since $\psi \in U $, by assertion (iii) in Lemma \ref{L:P_n|U} (c), we have
$$ \lim_{n\to \infty } \norm{ \Pn \psi - \psi }{V}{}  =0  .$$
By  \eqref{E:G*}, the continuity of the embedding $\lhs (Y, {V}_{}^{\prime }) \hookrightarrow \lcal (Y,V^{\prime })$ and (ii),
we infer that
$$
 \int_{s}^{t}
  \bigl| {G(\tun (\sigma , \omega  ))}^\ast {\bigl|}_{\lcal ({V}_{}^{\prime },Y)}^{2} \, d \sigma
  \le C  \int_{s}^{t}   \bigl(
   {|\tun (\sigma , \omega ) |}_{H}^{2} + 1 \bigr) \, d \sigma
\le \tilde{C} \bigl( \sup_{n \ge 1} \norm{\tun (\cdot ,\omega )}{{L}^{2}(0,T;H)}{2} +1 \bigr) \le K
$$
for some constant $K>0$. Thus
$$
\lim_{n \to \infty } {I}_{1}(n)=
\lim_{n \to \infty }  \int_{s}^{t}   \bigl| {G(\tun (\sigma , \omega  ))}^\ast {\bigl|}_{\lcal (H,Y)}^{2}
 \cdot \norm{\Pn \psi  -\psi }{V}{2} \, d \sigma  = 0 .
$$
Let us move to the  term ${I}_{2}(n)$ in (\ref{E:2_(I1+I2)}). We will prove that for every $\psi \in V $ the term ${I}_{2}(n)$ tends to zero  as $n \to \infty $.
 Assume first that $\psi \in \vcal $. Then there exists $R>0$ such that
$\supp \psi $ is a compact subset of ${\ocal }_{R}$.
Since $\tun (\cdot ,\omega ) \to \tu (\cdot ,\omega )$ in ${L}^{2}(0,T;{H}_{loc})$, then in particular
$$
    \lim_{n\to \infty } {q}_{T,R} \bigl( \tun (\cdot ,\omega ) - \tu (\cdot ,\omega ) \bigr) =0,
$$
where ${q}_{T,R}$ is the seminorm defined by (\ref{E:seminorms}). In the other words,
$\tun (\cdot ,\omega ) \to \tu (\cdot ,\omega )$ in ${L}^{2}(0,T;$ ${H}_{{\ocal }_{R}})$.
Therefore there exists a subsequence $(\tunk (\cdot ,\omega ){)}_{k} $ such that
$$
  \tunk (\sigma ,\omega ) \to \tu (\sigma ,\omega )
   \quad \mbox{ in ${H}_{{\ocal }_{R}}$ for almost all $\sigma \in [0,T]$ as $ k\to \infty  $. }
$$
Hence by assumption (\ref{E:G**})
$$
  G \bigl( \tunk (\sigma ,\omega ){\bigr) }^\ast \psi \to G \bigl( \tu (\sigma ,\omega ) {\bigr) }^\ast \psi
   \quad \mbox{ in $Y$ for almost all $\sigma \in [0,T]$ as $ k\to \infty  $. }
$$
In conclusion, by the Vitali Theorem
$$
     \lim_{k\to \infty } \int_{s}^{t} \norm{ G \bigl( \tunk (\sigma ,\omega ){\bigr) }^\ast \psi -
      G \bigl( \tu (\sigma ,\omega ) {\bigr) }^\ast \psi}{Y}{2} \, d\sigma =0
      \qquad \mbox{for $\psi \in \vcal $.}
$$
Repeating the above reasoning for all subsequences, we infer that from every subsequence of
the sequence $\bigl( G \bigl( \tun (\sigma ,\omega ){\bigr) }^\ast \psi {\bigr) }_{n}$ we can choose the subsequence convergent in ${L}^{2}(s,t;Y)$ to the same limit. Thus the whole sequence
$\bigl( G \bigl( \tun (\sigma ,\omega ){\bigr) }^\ast \psi {\bigr) }_{n}$ is convergent  to
$G \bigl( \tu (\sigma ,\omega ){\bigr) }^\ast \psi  $ in ${L}^{2}(s,t;Y)$. At the same time
$$
    \lim_{n\to\infty } {I}_{2}(n) =0 \qquad \mbox{for every $\psi \in \vcal $.}
$$

\bigskip  \noindent
If $\psi \in V$ then for every $\eps >0$ we can find ${\psi }_{\eps } \in \vcal $ such that
$\norm{\psi - {\psi }_{\eps } }{V}{} < \eps $.
By the continuity of the embedding $\lhs (Y,V^{\prime }) \hookrightarrow \lcal (Y,V^{\prime })$, condition (G$^\ast $) and (ii), we obtain
\begin{eqnarray*}
& &\int_{s}^{t}   \bigl\|  {G(\tun (\sigma , \omega ))}^\ast \psi
- {G(\tu (\sigma ,\omega ))}^\ast  \psi  {\bigr\| }_{Y}^{2}   \, d\sigma  \\
& &\le 2  \int_{s}^{t} \!  \bigl\| [ {G(\tun (\sigma , \omega ))}^\ast
- {G(\tu (\sigma ,\omega ))}^\ast ] ( \psi - {\psi }_{\eps } ) {\bigr\| }_{Y}^{2}    d\sigma  \\
& & \qquad + 2  \int_{s}^{t} \!  \bigl\| [ {G(\tun (\sigma , \omega ))}^\ast
- {G(\tu (\sigma ,\omega ))}^\ast ]  {\psi }_{\eps }  {\bigr\| }_{Y}^{2}  d\sigma \\
& &\le 4 \int_{s}^{t}    \bigl[ \norm{G(\tun (\sigma , \omega ))}{\lcal (Y, V^{\prime })}{2}
+ \norm{G(\tu (\sigma , \omega ))}{\lcal (Y, V^{\prime })}{2} \bigr] \norm{ \psi - {\psi }_{\eps } }{V}{2}    d\sigma \\
& &\qquad + 2\int_{s}^{t}   \bigl\| [ {G(\tun (\sigma , \omega ))}^\ast
- {G(\tu (\sigma ,\omega ))}^\ast ]  {\psi }_{\eps }  {\bigr\| }_{Y}^{2}   \, d\sigma \\
& & \le c \bigl( \norm{\tun (\cdot , \omega )}{{L}^{2}(0,T;H)}{2}
  + \norm{\tu (\cdot , \omega )}{{L}^{2}(0,T;H)}{2} + 2T \bigr)
   \cdot {\eps }^{2} \\
& & \qquad   + 2\int_{s}^{t}   \bigl\| [ {G(\tun (\sigma , \omega ))}^\ast
- {G(\tu (\sigma ,\omega ))}^\ast ]  {\psi }_{\eps }  {\bigr\| }_{Y}^{2}   \, d\sigma  \\
& &\le C  {\eps }^{2}  + 2\int_{s}^{t}   \bigl\| [ {G(\tun (\sigma , \omega ))}^\ast
- {G(\tu (\sigma ,\omega ))}^\ast ]  {\psi }_{\eps }  {\bigr\| }_{Y}^{2}   \, d\sigma ,
\end{eqnarray*}
for some positive constants $c$ and $C$.
Passing to the upper limit as $n \to \infty $, we infer that
$$
 \limsup_{n \to \infty } \int_{s}^{t}   \bigl\|  {G(\tun (\sigma , \omega ))}^\ast \psi
- {G(\tu (\sigma ,\omega ))}^\ast  \psi  {\bigr\| }_{Y}^{2}   \, d\sigma
\le C  {\eps }^{2} .
$$
In conclusion, we proved that
$$
 \lim_{n \to \infty } \int_{s}^{t}   \bigl\|  {G(\tun (\sigma , \omega ))}^\ast \psi
- {G(\tu (\sigma ,\omega ))}^\ast  \psi  {\bigr\| }_{Y}^{2}   \, d\sigma   =0
$$
which completes the proof of (\ref{E:L^2(s,t;Y)_conv}) and of lemma \ref{L:conv_quadr_var}.
\end{proof}

\noindent
By Lemma \ref{L:conv_martingale} we can pass to the limit in (\ref{E:EtMn}).
By Lemmas \ref{L:qvar_conv_left} and \ref{L:conv_quadr_var} we can pass to the limit in  (\ref{E:EqvtMn}) as well.
After  passing to the limits in (\ref{E:EtMn}) and (\ref{E:EqvtMn}) we infer that
for all $\psi,\zeta \in U$:
\begin{equation} \label{E:EtM}
 \e \bigl[ \dual{\tM (t) - \tM (s)}{\psi}{} \, h\bigl( {\tu }_{|[0,s]} \bigr) \bigr] = 0
\end{equation}
and
\begin{equation} \label{E:EqvM}
  \e \Bigl[ \Bigl( \dual{\tM (t)}{\psi}{} \dual{\tM (t)}{\zeta}{}
   - \dual{\tM (s)}{\psi}{} \dual{\tM (s)}{\zeta}{}
  - \int_{s}^{t} \ilsk{{G(\tu (\sigma ))}^\ast  \psi }{{G(\tu (\sigma ))}^\ast  \zeta }{Y}
  \, d\sigma  \Bigl)  \cdot h \bigl( {\tu }_{|[0,s]} \bigr) \Bigr] =0 .
\end{equation}
where $\tM $ is an $U^{\prime }$-valued process defined by (\ref{E:tM}).

\bigskip  \noindent
\bf Continuation of the proof of Theorem \ref{T:existence}. \rm
Now, we apply the idea analogous to the reasoning used by Da Prato and Zabczyk, see \cite{DaPrato+Zabczyk_1992}, Section 8.3. Consider operator $L : U \supset D(L) \to H $ defined by (\ref{E:op_L}),
the inverse ${L}^{-1}:H \to U$ and its dual $({L}^{-1})^{\prime }: U^{\prime } \to H^{\prime }$.
By (\ref{E:EtM}) and  (\ref{E:EqvM}) with
$\psi := {L}^{-1} \varphi $ and $\zeta := {L}^{-1}\eta $, where $\varphi , \eta \in H $ and equality
 (\ref{E:L=AAsLs}), we infer that
\begin{eqnarray*}
& &  ({L}^{-1 })^{\prime } \tM (t), \, \, t \in [0,T]
 \mbox{ is a continuous square integrable martingale in } H^{\prime }\cong H \mbox{ with respect}\\
& &\mbox{ to the filtration }
  \tilde{\mathbb{F}} = \bigl( {\tfcal }_{t} \bigr) ,
  \mbox{ where } {\fcal }_{t}= \sigma \{ \tu (s), \, \, s \le t \} \mbox{ with the quadratic variation}\\
& & \qquad \qquad \qvar{({L}^{-1})^{\prime } \tM  }_{t}
 =\int_{0}^{t} ({L}^{-1})^{\prime } G(\tu (s)) {(G(\tu (s)) ({L}^{-1})^{\prime }) }^\ast   \, ds .
\end{eqnarray*}
In particular, the continuity of the process $({L}^{-1 })^{\prime } \tM$ follows from the fact that
$\tu \in \ccal  ([0,T]; U^{\prime } ) $.
\bigskip  \noindent
By the Martingale Representation Theorem, see \cite{DaPrato+Zabczyk_1992}, there exist
\begin{itemize}
\item a stochastic basis
$\bigl( \ttOmega , \ttfcal , {\{ \ttfcal {}_{t} \} }_{t \ge 0} , \ttp  \bigr) $,
\item a cylindrical Wiener process $\ttW (t)$ defined on this basis,
\item and a progressively measurable process $\ttu (t)$
such that
\end{itemize}
\begin{eqnarray*}
& &({L}^{-1})^{\prime } \ttu (t) \,
  - ({L}^{-1})^{\prime } \ttu (0)  + ({L}^{-1})^{\prime } \int_{0}^{t} \acal \ttu (s) \, ds
  + ({L}^{-1})^{\prime } \int_{0}^{t}  B \bigl( \ttu (s) , \ttu (s) \bigr) \, ds \\
& &
   -({L}^{-1})^{\prime } \int_{0}^{t}  f(s) \, ds =\int_{0}^{t} ({L}^{-1})^{\prime } G \bigl( \ttu (s) \bigr) \, d \ttW (s).
\end{eqnarray*}
However,
$$
  \int_{0}^{t} ({L}^{-1})^{\prime } G \bigl( \ttu (s) \bigr) \, d \ttW (s)
 = ({L}^{-1})^{\prime } \int_{0}^{t} G \bigl( \ttu (s) \bigr) \, d \ttW (s).
$$
Hence
for all $t\in [0,T ]$ and all $v \in U$
\begin{eqnarray*}
& &\ilsk{\ttu (t)}{v}{H} \,
 - \ilsk{\ttu (0)}{v}{H}  +  \int_{0}^{t} \dual{ \acal \ttu (s) }{v}{} \, ds
  +   \int_{0}^{t} \dual{ B \bigl( \ttu (s)\bigr)}{v}{} \, ds   \\
& &\qquad \qquad  =  \int_{0}^{t} \dual{ f(s)}{v}{} \, ds
+ \Dual{ \int_{0}^{t} G \bigl( \ttu (s) \bigr) \, d \ttW (s)}{v}{}.
\end{eqnarray*}
Thus the conditions from Definition \ref{def-sol-martingale} hold with
$\bigl( \hat{\Omega }, \hat{\fcal }, {\{ {\hat{\fcal }}_{t} \} }_{t \ge 0} ,
\hat{\p }  \bigr) = \bigl( \ttOmega , \ttfcal , {\{ \ttfcal {}_{t} \} }_{t \ge 0} , \ttp  \bigr) $,
$\hat{W}= \ttW $ and $u = \ttu $. The proof of Theorem \ref{T:existence} is thus complete.
\qed

\section{An Example}  \label{S:Example}

\noindent
Let
\begin{equation}
 G(u)(t,x) d W(t):= \sum_{i=1}^{\infty } \bigl[
 \bigl( {b}^{(i)}(x) \cdot \nabla  \bigr) u (t,x) + {c}^{(i)} (x) u(t,x) \bigr]
d {\beta }^{(i)}(t) ,
\end{equation}
where
\begin{eqnarray*}
& & {\beta }^{(i)} , i\in \nat \mbox{ - independent standard Brownian motions,} \\
& & \bi  : \overline{\ocal} \to \rd  \mbox{ - of class } {\ccal }^{\infty } , \quad i \in \nat \\
& & \ci  : \overline{\ocal} \to \rzecz   \mbox{ - of class } {\ccal }^{\infty } , \quad i \in \nat
\end{eqnarray*}
are given. Assume that
\begin{equation}  \label{E:C_1}
 {C}_{1}:=\sum_{i=1}^{\infty }\bigl( \| \bi {\| }_{{L}^{\infty }}^{2}+\| \diver \bi {\| }_{{L}^{\infty }}^{2} + \| \ci {\| }_{{L}^{\infty }}^{2}\bigr) < \infty
\end{equation}
and
\begin{equation} \label{E:est_coercive}
 \sum_{j,k=1}^{d} \bigl( 2 {\delta }_{jk}
- \sum_{i=1}^{\infty } {b}^{(i)}_{j}(x){b}^{(i)}_{k}(x) \bigr) {\zeta }_{j} {\zeta }_{k}
\ge a {|\zeta |}^{2} , \qquad \zeta \in \rd
\end{equation}
for some $a \in (0,2]$. Assumption (\ref{E:est_coercive}) is equivalent to the following one
\begin{equation}  \label{E:est_bi_bj_bk}
 \sum_{i=1}^{\infty } \sum_{j,k=1}^{d}
{b}^{(i)}_{j}(x){b}^{(i)}_{k}(x) \bigr) {\zeta }_{j} {\zeta }_{k}
\le 2  \sum_{j,k=1}^{d} {\delta }_{jk} {\zeta }_{j} {\zeta }_{k} - a {|\zeta |}^{2}
 = (2-a ){|\zeta |}^{2} .
\end{equation}

\bigskip  \noindent
Let $Y:= {l}^{2}(\nat )$, where ${l}^{2}(\nat )$ denotes the space of all sequences $({h}_{i}{)}_{i \in \nat } \subset \rzecz $ such that $\sum_{i=1}^{\infty }{h}_{i}^{2} < \infty $. It is a Hilbert space with the scalar product given by
$
     \ilsk{h}{k}{{l}^{2}} := \sum_{i=1}^{\infty } {h}_{i} {k}_{i},
$
where $h=({h}_{i})$ and $k=({k}_{i})$ belong to ${l}^{2}(\nat )$.
Let us put
\begin{equation} \label{E:G_def}
 G(u) h = \sum_{i=1}^{\infty } \bigl[
 \bigl( {b}^{(i)} \cdot \nabla  \bigr) u  + {c}^{(i)} u \bigr] {h}_{i} ,
\qquad u \in V, \quad h=({h}_{i}) \in {l}^{2}(\nat ).
\end{equation}
We will show that the mapping $G$ fulfils assumption \bf (A.3)\rm . Since  $G$ is linear, it is Lipschitz continuous provided that it is bounded.
Thus we will show that:
\begin{itemize}
\item The following inequality holds
\begin{equation}
     2 \dual{\acal u }{u }{} -  \norm{G(u )}{\lhs (Y,H)}{2}
     \ge  \eta \norm{u}{}{2} -{\lambda }_{0} {|u |}_{H}^{2}  , \qquad u \in V
  \tag{$\mathbf{\tilde{G}}$}
\end{equation}
for some constants ${\lambda }_{0}$ and $\eta \in (0,2]$.
\item Moreover, $G $ extends to a linear mapping $G : H \to \lhs (Y, V^{\prime }) $  and
\begin{equation}
   \norm{G(u)}{\lhs (Y, V^{\prime })}{} \le C {|u|}_{H}^{} , \qquad u \in H .
  \tag{$\mathbf{\tilde{G}}^\ast $}
\end{equation}
for some $C>0$.
\item Furthermore, for each $R>0$ the mapping $G : {H}_{{\ocal }_{R}} \to \lhs (Y, V^{\prime }({\ocal }_{R})) $ is well defined and satisfies the following estimate
\begin{equation}
  \norm{G(u)}{\lhs (Y, V^{\prime }({\ocal }_{R}))}{} \le {C}_{R}  {|u|}_{{H}_{{\ocal }_{R}}}^{} , \qquad u \in H .
  \tag{$\mathbf{\tilde{G}}{}_{R}^\ast $}
\end{equation}
for some ${C}_{R}>0$.

\bigskip \noindent
Here $V^{\prime } ({\ocal }_{R})$ is the dual space to $V({\ocal }_{R})$, where
\begin{equation}  \label{E:V(O_R)}
  V({\ocal }_{R}) := \mbox{ the closure of $\vcal ({\ocal }_{R})$ in ${H}^{1}({\ocal }_{R}, \rd  )$} ,
\end{equation}
and $\vcal ({\ocal }_{R})$ denotes the space of all divergence-free vector fields of class
${\ccal }^{\infty }$ with compact supports contained in ${\ocal }_{R}$.

\bigskip  \noindent
Let us recall that ${H}_{{\ocal }_{R}}$ is the space of restrictions to the subset ${\ocal }_{R}$ of
elements of the space $H$, i.e.
$$
    {H}_{{\ocal }_{R}} := \{ {u}_{|{\ocal }_{R}} ; \, \, \,  u \in H  \}
$$
with the scalar product defined by
$\ilsk{u}{v}{{H}_{{\ocal }_{R}}} := \int_{{\ocal }_{R}} u v \, dx $,  $u,v \in {H}_{{\ocal }_{R}}$.
\end{itemize}

\bigskip  \noindent
\bf Remark. \rm \it From condition ($\mathbf{\tilde{G}}{}_{R}^\ast $) it follows that the mapping $G$ satisfies condition (\ref{E:G**}) in assumption \rm (A.3).

\bigskip \noindent
Indeed, by estimate ($\mathbf{\tilde{G}}{}_{R}^\ast $) and the continuity of the embedding $\lhs (Y, V^{\prime }({\ocal }_{R})) \hookrightarrow \lcal (Y, V^{\prime }({\ocal }_{R}))$, we obtain
$$
    |G(u)y{|}_{V^{\prime }({\ocal }_{R})} \le C(R) {|u|}_{{H}_{{\ocal }_{R}}}^{} \norm{y}{Y}{} , \qquad
 u \in H , \quad y \in Y
$$
for some constant $C(R)>0$.
Thus for any $\psi  \in V({\ocal }_{R})$
$$
    |(G(u)y)\psi | \le C(R) {|u|}_{{H}_{{\ocal }_{R}}}^{} \norm{y}{Y}{} \norm{\psi }{V({\ocal }_{R})}{} , \qquad
 u \in H , \quad y \in Y.
$$
Since by definition $({\psi }^{\ast \ast }G(u))y:= (G(u)y)\psi $, thus from the above inequality we infer that
\begin{equation} \label{E:psi**_est}
  |{\psi }^{\ast \ast }G(u){|}_{Y^\prime} \le C(R) \norm{\psi }{V}{} {|u|}_{{H}_{{\ocal }_{R}}}^{}.
\end{equation}
Therefore if we fix $\psi \in \vcal $ then there exists ${R}_{0}>0$
such that $\supp \psi $ is a compact subset of ${\ocal }_{{R}_{0}}$.
Since $G$ is linear,  estimate (\ref{E:psi**_est}) with $R:={R}_{0}$ yields that the mapping
$$
     {L}^{2}_{loc}(\ocal , \rd ) \supset H \ni u \mapsto {\psi }^{\ast \ast }G(u) \in Y^\prime
$$
is continuous in the Fr\'{e}chet topology inherited on the space $H$ from the space
${L}^{2}_{loc}(\ocal , \rd )$. Thus the mapping ${\psi }^{\ast \ast }G$ satisfies condition (\ref{E:G**}). \qed

\bigskip  \noindent
\bf Proof of ($\mathbf{\tilde{G}}$). \rm Let us consider a standard orthonormal basis ${h}^{(i)}$, $i\in \nat $, in ${l}^{2}(\nat )$. Let $u \in V $.
Then for each $i \in \nat $ we have
\begin{eqnarray*}
& & {\bigl| G(u) \hi \bigr| }_{H}^{2}
 = \Ilsk{ \sum_{j=1}^{d} \bij \frac{\partial u}{\partial {x}_{j}} + \ci u }{
    \sum_{k=1}^{d} \bik \frac{\partial u}{\partial {x}_{k}} + \ci u}{H} \\
& &= \Ilsk{ \sum_{j=1}^{d} \bij \frac{\partial u}{\partial {x}_{j}}  }{
    \sum_{k=1}^{d} \bik \frac{\partial u}{\partial {x}_{k}} }{H}
+ 2 \Ilsk{\sum_{j=1}^{d} \bij \frac{\partial u}{\partial {x}_{j}}  }{\ci u }{H}
+ {\bigl| \ci u  \bigr| }_{H}^{2} . \\
\end{eqnarray*}
Thus
\begin{eqnarray}
& &\norm{G(u)  }{\lhs (Y,H) }{2}  = \sum_{i=1}^{\infty } {\bigl| G(u) \hi \bigr| }_{H}^{2} \nonumber \\
& &= \sum_{i=1}^{\infty }  \Ilsk{ \sum_{j=1}^{d} \bij \frac{\partial u}{\partial {x}_{j}}  }{
    \sum_{k=1}^{d} \bik \frac{\partial u}{\partial {x}_{k}} }{H}
+ 2 \sum_{i=1}^{\infty }  \Ilsk{\sum_{j=1}^{d} \bij \frac{\partial u}{\partial {x}_{j}}  }{\ci u }{H}
+ \sum_{i=1}^{\infty } {\bigl| \ci u  \bigr| }_{H}^{2} . \label{E:norm_G(u)_(Y,H)}
\end{eqnarray}
We estimate each term on the right-hand side of the above equality.
By assumption (\ref{E:est_bi_bj_bk})
$$
\sum_{i=1}^{\infty }  \Ilsk{ \sum_{j=1}^{d} \bij \frac{\partial u}{\partial {x}_{j}}  }{
    \sum_{k=1}^{d} \bik \frac{\partial u}{\partial {x}_{k}} }{H}
 = \int_{\ocal} \sum_{i=1}^{\infty } \sum_{j,k=1}^{d}
 \bij (x)\bik (x) \frac{\partial u}{\partial {x}_{j}} \frac{\partial u}{\partial {x}_{k}} \, dx
 \le (2-a ) {|\nabla u |}_{H}^{2} .
$$
Let us move to the second term in (\ref{E:norm_G(u)_(Y,H)}).
For each $i \in \nat $, we have
$$
\Ilsk{ \sum_{j=1}^{d} \bij \frac{\partial u}{\partial {x}_{j}}  }{\ci u }{H}
\le  {\| \bi  \| }_{{L}^{\infty }}  {\| \ci \| }_{{L}^{\infty }} \norm{u}{}{} \cdot |u{|}_{H}
\le  \frac{1}{2} \bigl( {\| \bi  \| }_{{L}^{\infty }}^{2} +  {\| \ci \| }_{{L}^{\infty }}^{2} \bigr)
\norm{u}{}{} \cdot |u{|}_{H}.
$$
Thus for any $\eps >0$
$$
2 \sum_{i=1}^{\infty } \Ilsk{ \sum_{j=1}^{d} \bij \frac{\partial u}{\partial {x}_{j}}  }{\ci u }{H}
 \le  \sum_{i=1}^{\infty } \bigl(
 {\| \bi  \| }_{{L}^{\infty }}^{2}
 +{\| \ci \| }_{{L}^{\infty }}^{2} \bigr) \norm{u}{}{} \cdot |u{|}_{H}
 = {C}_{1} \norm{u}{}{} \cdot  |u{|}_{H}
\le \eps \norm{u}{}{2} + \frac{{C}_{1}^{2}}{4\eps } |u{|}_{H}^{2},
$$
where ${C}_{1}$ is defined by (\ref{E:C_1}).
The third term in (\ref{E:norm_G(u)_(Y,H)}) we estimate as follows
$$
\sum_{i=1}^{\infty } {\bigl| \ci u  \bigr| }_{H}^{2}
\le \sum_{i=1}^{\infty } {\| \ci  \| }_{{L}^{\infty }}^{2} |u{|}_{H}^{2}
\le {C}_{1} |u{|}_{H}^{2}  .
$$
Hence
$$
\norm{G(u)  }{\lhs (Y,H) }{2}  \le (2 + \eps -a ) {\| u \| }_{}^{2}
+ \biggl( \frac{{C}_{1}^{2}}{4\eps } + {C}_{1} \biggr) |u{|}_{H}^{2}
$$
and
\begin{eqnarray*}
& & 2 \bigl< \acal u |u \bigr> - \norm{G(u)  }{\lhs (Y,H) }{2}  \ge 2 \norm{u}{}{2}
- (2 + \eps -a ) {\| u \| }_{}^{2}
- \biggl( \frac{{C}_{1}^{2}}{4\eps } + {C}_{1} \biggr) |u{|}_{H}^{2}  \\
& & =  (a - \eps  ) {\| u \| }_{}^{2}
- \biggl( \frac{{C}_{1}^{2}}{4\eps } + {C}_{2} \biggr) |u{|}_{H}^{2} .
\end{eqnarray*}
It is sufficient to take $\eps >0 $ such that $a - \eps \in(0,2]$.
Then condition ($\mathbf{\tilde{G}}$) holds with $\eta := a -\eps $ and ${\lambda }_{0}:= \frac{{C}_{1}^{2}}{4\eps } + {C}_{1}$. \qed

\bigskip  \noindent
\bf Proof of  ($\mathbf{\tilde{G}}^\ast $). \rm
Let $b=({b}_{1}, ..., {b}_{d}) : \overline{\ocal} \to \rd $ and let $u,v \in \vcal $. Then
\begin{equation}  \label{E:differentiation}
\sum_{j=1}^{d} \frac{\partial }{\partial {x}_{j}} ({b}_{j} u)
 = \sum_{j=1}^{d} \biggl(  \frac{\partial {b}_{j} }{\partial {x}_{j}} u
    + {b}_{j} \frac{\partial u }{\partial {x}_{i}} \biggr)
 = (\diver b ) u + \sum_{j=1}^{d} {b}_{j} \frac{\partial u }{\partial {x}_{j}} .
\end{equation}
Thus using the integration by parts formula, we obtain
\begin{eqnarray*}
 \int_{\ocal} \bigl( \sum_{j=1}^{d} {b}_{j} \frac{\partial u }{\partial {x}_{j}}  \bigr) v \, dx
 = \sum_{j=1}^{d} \int_{\ocal} \frac{\partial }{\partial {x}_{j}} ({b}_{j} u) v \, dx
    - \int_{\ocal} (\diver b) u v \, dx & &  \\
 = - \sum_{j=1}^{d} \int_{\ocal} ({b}_{j} u) \frac{\partial v}{\partial {x}_{j}}  \, dx
    - \int_{\ocal} (\diver b) u v \, dx .  & &
\end{eqnarray*}
Hence using the H\"{o}lder inequality, we obtain
\begin{eqnarray*}
 & & \bigl| \int_{\ocal} \bigl( \sum_{j=1}^{d} {b}_{j} \frac{\partial u }{\partial {x}_{j}}  \bigr) v \, dx  \bigr|
  \le \| b {\| }_{{L}^{\infty }} {| u | }_{H} {\| v \| }_{V}
 + \| \diver b {\| }_{{L}^{\infty }} {| u | }_{H} {\| v \| }_{V} .
\end{eqnarray*}
Therefore the bilinear form
$$
  \hat{b} (u,v):= \int_{\ocal} \sum_{j=1}^{d} {b}_{j} u  \frac{\partial v}{\partial {x}_{j}}  \, dx ,
  \qquad u,v \in \vcal
$$
is continuous on $\vcal \times \vcal \subset H \times V$. Thus $\hat{b}$ can be uniquely extended to
the bilinear form (denoted by the same letter)
$
       \hat{b} :H \times V \to \rzecz
$
and
\begin{equation}
  |\hat{b}(u,v)| \le \bigl( \| b {\| }_{{L}^{\infty }} + \| \diver b {\| }_{{L}^{\infty }} \bigr)
  {| u | }_{H} {\| v \| }_{V} , \qquad u \in H, \quad v \in V .
\end{equation}
Hence if we define a linear map $\hat{B}$ by $\hat{B}u:= \hat{b}(u,\cdot )$, then $\hat{B}u\in V^{\prime }$
for all $u \in H$ and the following inequality holds
$$
  |\hat{B}u{|}_{V^{\prime }}
  \le \bigl( \| b {\| }_{{L}^{\infty }} + \| \diver b {\| }_{{L}^{\infty }} \bigr) {| u | }_{H} ,
  \qquad u \in H
$$
Using more classical notation, we can rewrite the above inequality in the following form
\begin{equation}  \label{E:hat_B_est}
 |(b \cdot \nabla ) u {|}_{V^{\prime }}
\le \bigl( \|  b {\| }_{{L}^{\infty }} + \| \diver b {\| }_{{L}^{\infty }} \bigr) \cdot {| u | }_{H} ,
 \qquad u \in H .
\end{equation}
Moreover,
\begin{equation}  \label{E:c_i_est}
   {| \ci u |}_{V^{\prime }}  \le  {\| \ci  \| }_{{L}^{\infty }} {|u|}_{H} , \qquad u \in H.
\end{equation}
Since by (\ref{E:G_def}) $G(u) \hi = \bigl( \bi \cdot \nabla  \bigr) u + \ci u  $, then
by the Schwarz inequality we get
$$
 | G(u) \hi {|}_{V^{\prime }}^{2}
   = | \bigl( \bi \cdot \nabla  \bigr) u + \ci u {|}_{V^{\prime }}^{2}
    \le 2 \bigl( | \bigl( \bi \cdot \nabla  \bigr) u  {|}_{V^{\prime }}^{2}  + | \ci u {|}_{V^{\prime }}^{2} \bigr) ,
\qquad u \in H .
$$
Thus by estimates (\ref{E:hat_B_est}) and (\ref{E:c_i_est}) we obtain
\begin{eqnarray*}
 & & \| G(u) h {\| }_{\lhs (Y,V^{\prime })}^{2}
  =  \sum_{i=1}^{\infty } | G(u) \hi {|}_{V^{\prime }}^{2}
    \le 2  \sum_{i=1}^{\infty } \Bigl( | \bigl( \bi \cdot \nabla  \bigr) u  {|}_{V^{\prime }}^{2}
     + | \ci u {|}_{V^{\prime }}^{2} \Bigr)  \\
 & & \le 2 \sum_{i=1}^{\infty }
    \bigl( 2 \|  \bi {\| }_{{L}^{\infty }}^{2} + 2 \| \diver \bi {\| }_{{L}^{\infty }}^{2} + {\| \ci  \| }_{{L}^{\infty }}^{2} \bigr)
    {|u|}_{H}^{2} , \qquad u \in H.
\end{eqnarray*}
Hence $G(u) \in \lhs (Y,V^{\prime })$ and
$$
\| G(u)  {\| }_{\lhs (Y,V^{\prime })}  \le  C \cdot  {|u|}_{H},  \qquad u \in H ,
$$
where $C=2{C}_{1}$. Thus condition ($\mathbf{\tilde{G}}^\ast $) holds. \qed

\bigskip
\noindent
\bf Proof of ($\mathbf{\tilde{G}}{}_{R}^\ast $). \rm
Let us fix $R>0$. We will proceed similarly as in the proof of ($\mathbf{\tilde{G}}^\ast $).
Let $v \in \vcal ({\ocal }_{R})$. Since the values  of $v$ on the boundary
$\partial {\ocal }_{R}$ are equal to zero,
thus using (\ref{E:differentiation}) and the integration by parts formula, we infer that
\begin{eqnarray*}
 \int_{{\ocal}_{R}} \bigl( \sum_{j=1}^{d} {b}_{j} \frac{\partial u }{\partial {x}_{j}}  \bigr) v \, dx
 = \sum_{j=1}^{d} \int_{{\ocal }_{R}} \frac{\partial }{\partial {x}_{j}} ({b}_{j} u) v \, dx
    - \int_{{\ocal }_{R}} (\diver b) u v \, dx  & & \\
 = - \sum_{j=1}^{d} \int_{{\ocal }_{R}} ({b}_{j} u) \frac{\partial v}{\partial {x}_{j}}  \, dx
    - \int_{{\ocal }_{R}} (\diver b) u v \, dx .  & &
\end{eqnarray*}
Using the H\"{o}lder inequality, we obtain
\begin{eqnarray}
& & \bigl| \int_{{\ocal }_{R}} \bigl( \sum_{j=1}^{d} {b}_{j} \frac{\partial u }{\partial {x}_{j}}  \bigr) v \, dx  \bigr|
  \le \| b {\| }_{{L}^{\infty }} {| u | }_{{H}_{{\ocal }_{R}}} {\| v \| }_{V ({\ocal }_{R})}
 + \| \diver b {\| }_{{L}^{\infty }} {| u | }_{{H}_{{\ocal }_{R}}} {\| v \| }_{V({\ocal }_{R})}
 \label{E:Holder_est_O_R} .
\end{eqnarray}
Therefore if we define a linear functional ${\hat{B}}_{R}$ by
$$
 {\hat{B}}_{R} v
 :=\int_{{\ocal}_{R}} \bigl( \sum_{j=1}^{d} {b}_{j} \frac{\partial u }{\partial {x}_{j}} \bigr) v \, dx,
 \qquad v \in \vcal ({\ocal }_{R}),
$$
we infer that it is bounded in the norm of the space $V({\ocal }_{R})$. Thus it can be uniquely extended to a linear bounded functional (denoted also by ${\hat{B}}_{R}$) on $V({\ocal }_{R})$. Moreover,
by estimate (\ref{E:Holder_est_O_R}) we have the following inequality
$$
     |{\hat{B}}_{R} {|}_{V^{\prime }({\ocal }_{R})}
    \le \bigl(  \| b {\| }_{{L}^{\infty }} + \| \diver b {\| }_{{L}^{\infty }}   \bigr)
   {| u | }_{{H}_{{\ocal }_{R}}}
$$
or equivalently
\begin{equation}  \label{E:hat_B_R_est}
\| (b \cdot \nabla ) u {\| }_{V^{\prime }({\ocal }_{R})}
\le \bigl( \|  b {\| }_{{L}^{\infty }} + \| \diver b {\| }_{{L}^{\infty }} \bigr)
 \cdot {| u | }_{{H}_{{\ocal }_{R}}} .
\end{equation}
Furthermore,
\begin{equation}  \label{E:c_i_R_est}
   {\| \ci u \| }_{V^{\prime }({\ocal }_{R})}  \le  {\| \ci  \| }_{{L}^{\infty }} {|u|}_{{H}_{{\ocal }_{R}}}.
\end{equation}
Since by (\ref{E:G_def}) $G(u) \hi = \bigl( \bi \cdot \nabla  \bigr) u + \ci u  $, thus by the Schwarz inequality we get
$$
 | G(u) \hi {|}_{V^{\prime }({\ocal }_{R})}^{2}
   = | \bigl( \bi \cdot \nabla  \bigr) u + \ci u {|}_{V^{\prime }({\ocal }_{R})}^{2}
    \le 2 \bigl( | \bigl( \bi \cdot \nabla  \bigr) u  {|}_{V^{\prime }({\ocal }_{R})}^{2}
   + | \ci u {|}_{V^{\prime }({\ocal }_{R})}^{2} \bigr) .
$$
Hence by estimates (\ref{E:hat_B_R_est}) and (\ref{E:c_i_R_est}) we obtain
\begin{eqnarray*}
& &  \| G(u) h {\| }_{\lhs (Y,V^{\prime }({\ocal }_{R}))}^{2}
  =  \sum_{i=1}^{\infty } | G(u) \hi {|}_{V^{\prime }({\ocal }_{R})}^{2}
    \le 2  \sum_{i=1}^{\infty } \biggl( | \bigl( \bi \cdot \nabla  \bigr) u  {|}_{V^{\prime }({\ocal }_{R})}^{2}
     + | \ci u {|}_{V^{\prime }({\ocal }_{R})}^{2} \biggr)  \\
 & & \le 2 \sum_{i=1}^{\infty }
    \bigl( 2 \|  \bi {\| }_{{L}^{\infty }}^{2} + 2 \| \diver \bi {\| }_{{L}^{\infty }}^{2} + {\| \ci  \| }_{{L}^{\infty }}^{2} \bigr)
    {|u|}_{{H}_{{\ocal }_{R}}}^{2} .
\end{eqnarray*}
Therefore $G(u) \in \lhs (Y,V^{\prime }({\ocal }_{R}))$ and
$$
\| G(u)  {\| }_{\lhs (Y,V^{\prime }({\ocal }_{R}))}  \le {C}_{R} \cdot  {|u|}_{{H}_{{\ocal }_{R}}},
$$
where ${C}_{R}=2{C}_{1}$.
Thus condition ($\mathbf{\tilde{G}}{}_{R}^\ast $) holds. \qed


\section{\bf 2D stochastic Navier-Stokes equations}  \label{S:2D_NS}

\bigskip \noindent
In the two-dimensional case the martingale solution of the stochastic Navier-Stokes equation, given by Theorem \ref{T:existence}, has stronger
regularity properties. We will prove that $\hat{\p }$-a.s. the trajectories are equal almost everywhere to an $H$-valued continuous functions defined on $[0,T]$. Similarly to Capi\'{n}ski \cite{Capinski_1993} we will prove that the solutions are pathwise unique.
Moreover, using the results due to Ondrej\'{a}t \cite{Ondrejat_2004}
 we will show the existence of strong solutions and  uniqueness in law.

\bigskip  \noindent
It is well known that if $d=2$ then the following inequality holds,
see \cite[Lemma III.3.3]{Temam79},
\begin{equation} \label{E:2D_norm_L^4_est}
   \norm{u}{{L}^{4}}{} \le {2}^{\frac{1}{4}} {|u|}_{H}^{\frac{1}{2}} \norm{u}{}{\frac{1}{2}},
 \qquad u \in {H}^{1}_{0}.
\end{equation}
In the following Lemma we recall basic properties of the form $b$, defined by
(\ref{E:form_b}), valid in the two-dimensional case.

\bigskip
\begin{lemma} \label{L:III.3.4_Temam'79}\rm (Lemma III.3.4 in \cite{Temam79}) \it
If $d=2$ then
\begin{equation}
  |b(u,v,w)| \le {2}^{\frac{1}{2}}\, |u{|}^{\frac{1}{2}} \norm{u}{}{\frac{1}{2}} \cdot \norm{v}{}{}
  \cdot |w{|}^{\frac{1}{2}} \norm{w}{}{\frac{1}{2}} , \qquad u,v,w \in V.
 \label{E:III.3.53}
\end{equation}
If $u \in {L}^{2}(0,T;V) \cap {L}^{\infty }(0,T;H)$ then $B(u) \in {L}^{2}(0,T;V^{\prime })$ and
\begin{equation}
  \norm{B(u)}{{L}^{2}(0,T;V^{\prime })}{} \le {2}^{\frac{1}{2}} \, |u{|}_{{L}^{\infty }(0,T;H)}
   \norm{u}{{L}^{2}(0,T;V)}{} .
 \label{E:III.3.54}
\end{equation}
\end{lemma}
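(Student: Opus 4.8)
The plan is to derive both inequalities from the two-dimensional Ladyzhenskaya inequality (\ref{E:2D_norm_L^4_est}) combined with the H\"older inequality, and then to exploit the antisymmetry (\ref{E:antisymmetry_b}) of the form $b$ in order to turn the cubic estimate (\ref{E:III.3.53}) into the quadratic control of $B$ required for (\ref{E:III.3.54}).

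For the pointwise estimate (\ref{E:III.3.53}) I would start from the definition (\ref{E:form_b}) and apply the H\"older inequality with exponents $4,2,4$ to the integral $b(u,v,w)=\int_{\ocal}(u\cdot\nabla v)w\,dx$, which gives
$$
   |b(u,v,w)|\le \norm{u}{{L}^{4}}{}\,\norm{v}{}{}\,\norm{w}{{L}^{4}}{},
$$
where I used $\norm{\nabla v}{{L}^{2}}{}=\norm{v}{}{}$. Applying (\ref{E:2D_norm_L^4_est}) separately to $u$ and to $w$ replaces the two $L^{4}$-norms by $2^{\frac14}{|u|}_{H}^{\frac12}\norm{u}{}{\frac12}$ and $2^{\frac14}{|w|}_{H}^{\frac12}\norm{w}{}{\frac12}$; multiplying the constants, $2^{\frac14}\cdot 2^{\frac14}=2^{\frac12}$, yields exactly (\ref{E:III.3.53}).

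For the second assertion I would recall that $\dual{B(u)}{v}{}=b(u,u,v)$ for $v\in V$ and use the antisymmetry (\ref{E:antisymmetry_b}) to rewrite $b(u,u,v)=-b(u,v,u)$. Applying the inequality (\ref{E:III.3.53}) to $b(u,v,u)$ (with $u$ in the outer two slots and $v$ in the middle) collapses the two $u$-slots into the single product ${|u|}_{H}\norm{u}{}{}$:
$$
   |\dual{B(u)}{v}{}|=|b(u,v,u)|
   \le 2^{\frac12}{|u|}_{H}\norm{u}{}{}\,\norm{v}{}{}
   \le 2^{\frac12}{|u|}_{H}\norm{u}{}{}\,\norm{v}{V}{},
$$
so that ${|B(u)|}_{V^{\prime}}\le 2^{\frac12}{|u|}_{H}\norm{u}{}{}$ for a.e.\ $t$ (measurability of $t\mapsto B(u(t))\in V^{\prime}$ following from the continuity of $B:V\times V\to V^{\prime}$ and the measurability of $u$). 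Squaring, integrating over $[0,T]$, bounding ${|u(t)|}_{H}$ by $|u|_{{L}^{\infty}(0,T;H)}$ under the integral and using $\norm{u(t)}{}{}\le\norm{u(t)}{V}{}$ gives
$$
   \norm{B(u)}{{L}^{2}(0,T;V^{\prime})}{2}
   \le 2\,|u|_{{L}^{\infty}(0,T;H)}^{2}\int_{0}^{T}\norm{u(t)}{V}{2}\,dt,
$$
and taking square roots produces (\ref{E:III.3.54}).

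The computations are entirely classical, so there is no genuinely hard step; the one point that deserves emphasis, and is really the crux of the second part, is the use of the antisymmetry. A na\"ive application of (\ref{E:III.3.53}) directly to $b(u,u,v)$ would place the full Dirichlet norm $\norm{u}{}{}$ in the middle slot and an interpolated $\norm{u}{}{\frac12}$ in an outer slot, producing the factor $\norm{u}{}{\frac32}$; after squaring this would require control of $\int_{0}^{T}\norm{u(t)}{}{3}\,dt$, which is \emph{not} furnished by $u\in {L}^{2}(0,T;V)\cap {L}^{\infty}(0,T;H)$. Rewriting $b(u,u,v)=-b(u,v,u)$ symmetrizes the two $u$-slots into ${|u|}_{H}\norm{u}{}{}$, so that only $\int_{0}^{T}\norm{u(t)}{}{2}\,dt$ enters, which is exactly the structure matching $|u|_{{L}^{\infty}(0,T;H)}\,\norm{u}{{L}^{2}(0,T;V)}{}$.
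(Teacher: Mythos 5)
Your proof is correct; the paper itself gives no proof of this lemma but simply cites Temam (Lemma III.3.4), and your argument is precisely the standard one found there: H\"older with exponents $4,2,4$ together with the Ladyzhenskaya inequality (\ref{E:2D_norm_L^4_est}) for the trilinear bound, and the antisymmetry $b(u,u,v)=-b(u,v,u)$ to symmetrize the two $u$-slots so that only $|u|_{H}\norm{u}{}{}$ appears, which is exactly what the regularity $u\in L^{2}(0,T;V)\cap L^{\infty}(0,T;H)$ controls.
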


\bigskip

\subsection{Regularity properties}

\bigskip
\begin{lemma} \label{L:2D_NS_regularity}
Let $d=2$ and assume that conditions (A.1)-(A.3) are satisfied. Let
$(\hat{\Omega }, \hat{\fcal }, \hat{\fmath }, \hat{\p }, u)$ be a martingale solution
of problem  (\ref{E:NS})
such that
\begin{equation}  \label{E:Th_2D_u_est}
\hat{\e } \Bigl[ \sup_{t \in [0,T]} |u(t){|}_{H}^{2} + \int_{0}^{T} \norm{u(t)}{}{2} \, dt  \Bigr] < \infty .
\end{equation}
Then for $\hat{\p }$-almost all $\omega \in \hat{\Omega }$ the trajectory $u(\cdot , \omega )$ is equal almost everywhere to a continuous $H$-valued function defined on $[0,T]$.
\end{lemma}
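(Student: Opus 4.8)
The plan is to rewrite the equation satisfied by a martingale solution as an It\^o evolution equation in the Gelfand triple $V\hookrightarrow H\hookrightarrow V'$ and to apply the It\^o formula for the square of the $H$-norm, one of whose conclusions is the existence of an $H$-valued continuous modification. First I would use \eqref{E:Th_2D_u_est} to fix, for $\hat\p$-a.e. $\omega$, the pathwise regularity $u(\cdot,\omega)\in L^2(0,T;V)\cap L^\infty(0,T;H)$. The identity \eqref{E:mart_sol_int_identity} holds for every $v\in\vcal$; since the maps $v\mapsto\ilsk{u(t)}{v}{H}$, $v\mapsto\int_0^t\dual{\acal u(s)}{v}{}\,ds$, $v\mapsto\int_0^t\dual{B(u(s),u(s))}{v}{}\,ds$, $v\mapsto\int_0^t\dual{f(s)}{v}{}\,ds$ and $v\mapsto\dual{\int_0^t G(u(s))\,d\hat{W}(s)}{v}{}$ are all continuous on $V$ and $\vcal$ is dense in $V$, the identity extends to every $v\in V$. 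Reading it as an equality in $V'$ yields $u(t)=u_0+\int_0^t F(s)\,ds+\int_0^t G(u(s))\,d\hat{W}(s)$ in $V'$, with $F:=f-\acal u-B(u)$.

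The next step is to verify that the drift is square integrable in $V'$ along $\hat\p$-a.e. path. Here the two-dimensional structure is essential: by \eqref{E:Acal_V'_norm} one has $\acal u\in L^2(0,T;V')$, by (A.2) $f\in L^2(0,T;V')$, and, crucially, by the two-dimensional estimate \eqref{E:III.3.54} of Lemma \ref{L:III.3.4_Temam'79} one has $B(u)\in L^2(0,T;V')$ with $\norm{B(u)}{L^2(0,T;V')}{}\le 2^{1/2}\,|u|_{L^\infty(0,T;H)}\,\norm{u}{L^2(0,T;V)}{}$. Consequently $F\in L^2(0,T;V')$ $\hat\p$-a.s. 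Likewise, (G) gives $\norm{G(u)}{\lhs(Y,H)}{2}\le(2-\eta)\norm{u}{}{2}+\lambda_0|u|_H^2+\rho$, whence $\int_0^T\norm{G(u(s))}{\lhs(Y,H)}{2}\,ds<\infty$ $\hat\p$-a.s. I emphasise that the inclusion $B(u)\in L^2(0,T;V')$ is exactly the point that fails in dimension three, where only $B(u)\in L^2(0,T;V_s')$ for $s>\tfrac{d}{2}+1$ is available; this is why the present regularity is restricted to $d=2$.

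Because \eqref{E:Th_2D_u_est} only furnishes second moments, $F$ and $G(u)$ need not be square integrable over $\hat\Omega\times[0,T]$, so I would localize. Put $\tau_N:=\inf\{t\in[0,T]:\sup_{s\le t}|u(s)|_H^2+\int_0^t\norm{u(s)}{V}{2}\,ds>N\}\wedge T$; these are stopping times (the two running processes are adapted and nondecreasing, $u$ being progressively measurable) and $\tau_N\uparrow T$ $\hat\p$-a.s. The truncated coefficients $F\ind{[0,\tau_N]}$ and $G(u)\ind{[0,\tau_N]}$ are square integrable over $\hat\Omega\times[0,T]$, so the process $u_N(t):=u_0+\int_0^t F(s)\ind{[0,\tau_N]}(s)\,ds+\int_0^t G(u(s))\ind{[0,\tau_N]}(s)\,d\hat{W}(s)$ satisfies the hypotheses of the It\^o formula in the triple $V\hookrightarrow H\hookrightarrow V'$, see \cite{Pardoux79} and \cite{Kryl+Ros_79}. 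That theorem provides a modification of $u_N$ with trajectories in $\ccal([0,T];H)$ $\hat\p$-a.s., together with the associated energy identity for $|u_N(t)|_H^2$.

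Finally I would patch and identify. Since $u_N=u$ on $[0,\tau_N]$ and $\tau_N\uparrow T$, the continuous modifications of the $u_N$ are mutually consistent and define a single function $\tilde u\in\ccal([0,T];H)$ with $\tilde u(t)=u(t)$ for Lebesgue-a.e.\ $t\in[0,T]$, $\hat\p$-a.s. Because the martingale solution satisfies $u(\cdot,\omega)\in\ccal([0,T];H_w)$ by Definition \ref{D:solution} while $\tilde u(\cdot,\omega)\in\ccal([0,T];H)\subset\ccal([0,T];H_w)$, the two weakly continuous maps agree for a.e.\ $t$ and hence for every $t$; in particular $u(\cdot,\omega)$ is equal almost everywhere to the continuous $H$-valued function $\tilde u(\cdot,\omega)$, as claimed. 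The main obstacle is the justification of the Gelfand-triple It\^o formula under merely pathwise integrability, which forces the stopping-time localization above, hand in hand with the two-dimensional inclusion $B(u)\in L^2(0,T;V')$.
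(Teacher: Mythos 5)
Your proof takes a genuinely different route from the paper's. You work directly on the full nonlinear equation: extend the weak identity to test functions in $V$, read it as an It\^o equation in the Gelfand triple $V\hookrightarrow H\hookrightarrow V'$, check via the 2D estimate (\ref{E:III.3.54}) that the drift lies pathwise in ${L}^{2}(0,T;V^{\prime })$, localize by stopping times to restore square integrability over $\hat\Omega\times[0,T]$, and invoke the variational It\^o formula of Pardoux/Krylov--Rozovskii to obtain $H$-continuity. The paper instead splits $u=v+z$: it solves the linear ``shifted'' Stokes equation (\ref{E:2D_Stokes}) with coefficient $G(u(\cdot))$ by Pardoux's Theorem 1.3 (no localization is needed there, since (\ref{E:Th_2D_u_est}) and (G$'$) make $G(u(\cdot))$ square integrable over $\hat\Omega\times[0,T]$), obtaining $z\in \ccal([0,T];H)\cap {L}^{2}(0,T;V)$, and then treats $v=u-z$ as a pathwise deterministic problem (\ref{E:2D_NS_shift}), identified with the unique continuous solution furnished by Theorem D.1; continuity of $u=\tilde v+z$ follows. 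The paper's decomposition confines the stochastic analysis to a linear equation where the standard ${L}^{2}$ theory applies and handles the quadratic nonlinearity purely deterministically via Temam's Lemma III.1.2 (which also feeds into the uniqueness argument later); your route is more direct but puts all the weight on the nonlinear It\^o formula. You correctly identify that localization is unavoidable for the nonlinear term, since (\ref{E:Th_2D_u_est}) controls $\hat\e[\sup_t|u|_H^2]$ and $\hat\e[\int\|u\|^2]$ separately but not the expectation of their product, which is what $\hat\e\|B(u)\|_{{L}^{2}(0,T;V')}^{2}$ requires. The one step you should justify more carefully is the application of the It\^o formula to $u_N(t)=u(t\wedge\tau_N)$: for $t>\tau_N$ this process is frozen at $u(\tau_N)$, which is only known to lie in $H$, so $u_N$ need not belong to ${L}^{2}(\hat\Omega\times[0,T];V)$ as the standard statement of the theorem requires. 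This is repairable with a version of the variational It\^o formula stated up to a stopping time (or the localized formulations in the locally monotone framework), but as written the hypotheses of the theorem you cite are not literally met; your final patching and identification via weak continuity is fine.
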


\bigskip  \noindent
\begin{proof}
If $u$ be  a martingale solution of problem (\ref{E:NS})
then,  in particular,
$u \in  \ccal \bigl( [0,T], {H}_{w} \bigr) \cap {L}^{2}(0,T;V )$, $\hat{\p }$-a.s. and
\begin{equation} \label{E:2D_NS}
  u(t) = {u}_{0} - \int_{0}^{t}\bigl[ \acal u(s) + B (u(s)) \bigr] \, ds + \int_{0}^{t}f(s)\, ds
    + \int_{0}^{t}G(u(s)) \, d \hat{W}(s), \quad t \in [0,T].
\end{equation}
Let us consider the following "shifted" Stokes equations
\begin{equation}  \label{E:2D_Stokes}
  z(t) = - \int_{0}^{t} Az(s) \, ds + \int_{0}^{t}G(u(s)) \, d \hat{W}(s),
\end{equation}
where  $A$ is the operator defined by
(\ref{E:op_A}).
Since $A$ satisfies condition (\ref{E:op_A_coerciv}) and
$u$ satisfies inequality (\ref{E:Th_2D_u_est}), by Theorem 1.3 in \cite{Pardoux79} we infer that
equation (\ref{E:2D_Stokes}) has a unique progressively measurable solution $z$ such that $\hat{\p }$-a.s. $z \in \ccal ([0,T];H)$ and
\begin{equation}  \label{E:2D_z_est}
\hat{\e } \Bigl[ \sup_{t \in [0,T]} |z(t){|}_{H}^{2} + \int_{0}^{T} \norm{z(t)}{}{2} \, dt  \Bigr] < \infty .
\end{equation}
Let
$$
    v(t):= u(t)-z(t), \qquad t \in [0,T] .
$$
For $\hat{\p }$-almost all $\omega \in \hat{\Omega }$ the function $v= v(\cdot ,\omega )$ is a weak solution of  the following deterministic equation
\begin{equation} \label{E:2D_NS_shift}
 \frac{d v(t)}{dt} =   - A v(t) +v(t)+z(t) -  B (v(t)+z(t))  + f(t) .
\end{equation}
Let $\omega \in \hat{\Omega }$ be such that $u(\cdot , \omega ) \in {L}^{2}(0,T;V) \cap \ccal ([0,T];{H}_{w})$ and
$ z(\cdot ,\omega ) \in {L}^{2}(0,T,V) \cap \ccal ([0,T];H)$
Let $\tilde{v} \in {L}^{2}(0,T;V) \cap \ccal ([0,T];H)$ be the  unique  solution of equation (\ref{E:2D_NS_shift})  with the initial condition $\tilde{v}(0)={u}_{0}$ whose existence is ensured by Theorem D.2. 
By the uniqueness, we obtain for almost all $t \in [0,T]$
$$
    \tilde{v}(t) = u(t) - z(t)
$$
Put
$$
    \hat{u} (t) := \tilde{v}(t) + z(t) , \qquad t \in [0,T].
$$
Then $\hat{u} \in \ccal ([0,T];H)$ and   $u(t)= \hat{u}(t)$ for almost all $t \in [0,T]$.
This completes the proof of the lemma.
\end{proof}

\bigskip
\subsection{Uniqueness and strong solutions}

\bigskip  \noindent
Let us recall that by assumption (A.3) the mapping $G:V \to \lhs (Y,H)$ is Lipschitz continuous, i.e. for some  $L>0 $ the following inequality holds
\begin{eqnarray}
 \norm{ G({u}_{1}(s))-G({u}_{2}(s)) }{\lhs (Y,H)}{}
 \le L \norm{{u}_{1}(s)-{u}_{2}(s)}{V}{} , \quad s \in [0,T].  \label{E:2D_uniq_Lipsch}
\end{eqnarray}
In the following lemma we will prove that in the case when $d=2$ the solutions of
problem  (\ref{E:NS}) are pathwise unique. The proof is similar to that of Theorem 3.2 in \cite{Capinski_1993} and uses the Schmalfuss idea of application of the It\^{o} formula for appropriate function, see \cite{Schmalfuss_1997}.

\bigskip  \noindent
\begin{lemma}  \label{L:2D_pathwise_uniqueness}
Let $d=2$ and assume that conditions (A.1)-(A.3) are satisfied.
Moreover, assume that $L <\sqrt{2}$.
If ${u}_{1}$ and  ${u}_{2}$ are two  solutions of problem  (\ref{E:NS})
defined on the same filtered probability space
$(\hat{\Omega }, \hat{\fcal }, \hat{\fmath }, \hat{\p })$ then $\hat{\p }$-a.s. for all $t \in [0,T]$,
${u}_{1}(t)={u}_{2}(t)$.
\end{lemma}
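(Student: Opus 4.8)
The plan is to analyse the difference $v:=u_1-u_2$, which satisfies $v(0)=u_0-u_0=0$ and, upon subtracting the two instances of the integral equation (\ref{E:2D_NS}),
$$
  v(t) = -\int_0^t \bigl[ \acal v(s) + B(u_1(s)) - B(u_2(s)) \bigr]\, ds
   + \int_0^t \bigl[ G(u_1(s)) - G(u_2(s)) \bigr]\, d\hat{W}(s).
$$
First I would verify that this is an equation in the Gelfand triple $V \hookrightarrow H \hookrightarrow V'$. Indeed $\acal v \in {L}^2(0,T;V')$ by (\ref{E:Acal_V'_norm}) since $v\in {L}^2(0,T;V)$, while $B(u_i)\in {L}^2(0,T;V')$ by Lemma \ref{L:III.3.4_Temam'79} because each $u_i$ satisfies (\ref{E:Th_2D_u_est}); the diffusion lies in ${L}^2(0,T;\lhs(Y,H))$ by the Lipschitz bound (\ref{E:2D_uniq_Lipsch}). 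Together with $v\in {L}^2(0,T;V)\cap \ccal([0,T];H)$ (the continuity coming from Lemma \ref{L:2D_NS_regularity}), this justifies the infinite-dimensional It\^o formula applied to $t\mapsto |v(t)|_H^2$.

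Next I would write out the It\^o formula. Using $\dual{\acal v}{v}{}=\norm{v}{}{2}$ from (\ref{E:Acal_ilsk_Dir}) for the coercive term and (\ref{E:2D_uniq_Lipsch}) for the quadratic-variation term, one gets
$$
  |v(t)|_H^2 + 2\int_0^t \norm{v(s)}{}{2}\, ds
   = -2\int_0^t \dual{B(u_1(s)) - B(u_2(s))}{v(s)}{}\, ds
   + \int_0^t \norm{G(u_1(s)) - G(u_2(s))}{\lhs(Y,H)}{2}\, ds + M(t),
$$
where $M$ is the stochastic-integral martingale. The decisive simplification is algebraic: writing $u_1=u_2+v$ and using the antisymmetry (\ref{E:wirowosc_b}), so that $b(u_2,v,v)=0$ and $b(v,v,v)=0$, the nonlinear pairing collapses to $\dual{B(u_1)-B(u_2)}{v}{}=b(v,u_2,v)$. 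The two-dimensional estimate (\ref{E:III.3.53}), applied with the arguments $(v,u_2,v)$, then yields $|b(v,u_2,v)|\le 2^{1/2}|v|_H\,\norm{v}{}{}\,\norm{u_2}{}{}$, and Young's inequality with a parameter $\eps\in(0,2-L)$ gives
$$
  2|b(v,u_2,v)| \le \eps\,\norm{v}{}{2} + \tfrac{2}{\eps}\,|v|_H^2\,\norm{u_2}{}{2}.
$$
Combining this with (\ref{E:2D_uniq_Lipsch}) (which contributes $+L\int_0^t\norm{v(s)}{}{2}\,ds$) and the hypothesis $L<2$ converts the identity into
$$
  |v(t)|_H^2 + (2-L-\eps)\int_0^t \norm{v(s)}{}{2}\, ds
   \le \tfrac{2}{\eps}\int_0^t |v(s)|_H^2\,\norm{u_2(s)}{}{2}\, ds + M(t).
$$

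The remaining difficulty — and the main obstacle — is that the multiplier $\norm{u_2(s)}{}{2}$ is only integrable in time, not bounded, so a direct Gronwall argument is unavailable. This is exactly where I would invoke the Schmalfuss device of \cite{Schmalfuss_1997}. I set $r(t):=\tfrac{2}{\eps}\int_0^t \norm{u_2(s)}{}{2}\, ds$, which is finite $\hat{\p}$-a.s. since $u_2\in {L}^2(0,T;V)$, and apply the It\^o formula to the weighted process $e^{-r(t)}|v(t)|_H^2$; the extra drift $-r'(t)e^{-r(t)}|v(t)|_H^2$ precisely cancels the offending term, leaving
$$
  e^{-r(t)}|v(t)|_H^2 + (2-L-\eps)\int_0^t e^{-r(s)}\,\norm{v(s)}{}{2}\, ds \le \widetilde{M}(t)
$$
with $\widetilde{M}$ a local martingale. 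I would then localize through the stopping times $\tau_N:=\inf\{t:\int_0^t\norm{u_2(s)}{}{2}\,ds\ge N\}\wedge T$, so that $\widetilde{M}(\cdot\wedge\tau_N)$ is a genuine martingale of zero mean; taking expectations gives $\hat{\e}\bigl[e^{-r(t\wedge\tau_N)}|v(t\wedge\tau_N)|_H^2\bigr]\le 0$, whence $v\equiv 0$ on $[0,\tau_N]$, and letting $N\to\infty$ (using $\tau_N\uparrow T$ a.s.) yields $u_1(t)=u_2(t)$ for all $t\in[0,T]$, $\hat{\p}$-a.s. Everything apart from this weighting is the standard energy computation made possible by the sharp two-dimensional bound (\ref{E:III.3.53}).
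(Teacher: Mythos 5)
Your proposal is correct and follows essentially the same route as the paper's proof: the same decomposition $B(u_1)-B(u_2)=B(u_1,v)+B(v,u_2)$ combined with the antisymmetry (\ref{E:wirowosc_b}), the same two-dimensional estimate (\ref{E:III.3.53}) plus Young's inequality, and the same Schmalfuss exponential weight $e^{-r(t)}$ cancelling the merely integrable multiplier $\|u_2(s)\|^2$ before localizing and taking expectations. The one detail to tighten is the localization: with your $\tau_N$ defined through $\int_0^t\|u_2(s)\|^2\,ds\ge N$ the stochastic integral's integrand is not obviously square-integrable in expectation (one would need $\hat{\e}\bigl[\sup_{t}|v(t)|_H^2\int_0^T\|v(s)\|^2\,ds\bigr]<\infty$, which does not follow from the two separate bounds), whereas the paper stops when $|v(t)|_H^2>N$, so that $|v(s)|_H^2\le N$ pointwise on $[0,\tau_N]$ and the localized integral is a genuine zero-mean martingale.
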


\begin{proof}
Let
$$
     U := {u}_{1} - {u}_{2} .
$$
Then $U$ satisfies the following equation
$$
   dU(t) = - \bigl\{ \acal U(t) + \bigl[ B({u}_{1}(t))-B({u}_{2}(t)) \bigr] \bigr\}  \, dt
    +  \bigl[ G({u}_{1}(t))-G({u}_{2}(t))  \bigr] \, dW(t) , \qquad t \in [0,T]
$$
Let us define the stopping time
\begin{equation}  \label{E:2D_uniq_stopping}
   {\tau }_{N} := T \wedge \inf \{ t \in [0,T] : |U(t){|}_{H}^{2} >N  \} ,
\qquad N \in \nat  .
\end{equation}
Since $\hat{\e }\bigl[ \sup_{t \in [0,T]} |U(t){|}_{H}^{2} \bigr] < \infty $, $\hat{\p }$-a.s.  $\lim_{N \to \infty }{\tau }_{N} = T $.
Let $r(t):= a \int_{0}^{t} \norm{{u}_{2}(s)}{}{2} \, ds $, $t \in [0,T]$, where $a$ is a positive constant. We apply the It\^{o} formula to the function
$$
 F(t,x)={e}^{-r(t)}|x{|}_{H}^{2},  \qquad (t,x)\in [0,T]\times H.
$$
Since $\frac{\partial F}{\partial t} = - r^\prime (t) {e}^{-r(t)}|x{|}_{H}^{2}$ and
$\frac{\partial F}{\partial x} =  2{e}^{-r(t)}x$, we obtain for all $t \in [0,T]$
\begin{eqnarray}
 & &  {e}^{-r(t\wedge {\tau }_{N})}|U(t\wedge {\tau }_{N}){|}_{H}^{2} \nonumber \\
 & & =
  \int_{0}^{t\wedge {\tau }_{N}} {e}^{-r(s)} \bigl\{ -r^\prime (s) |U(s){|}_{H}^{2}
 - 2 \dual{\acal U(s) + B({u}_{1}(s))-B({u}_{2}(s)) }{U(s)}{} \bigr\} \, ds \nonumber \\
  & & \quad  + \frac{1}{2}\int_{0}^{t\wedge {\tau }_{N}} \tr \Bigl[ \bigl( G({u}_{1}(s))-G({u}_{2}(s))  \bigr)
 \frac{{\partial }^{2}F }{\partial {x}^{2}} \bigl( G({u}_{1}(s))-G({u}_{2}(s)) {\bigr) }^{\ast } \Bigr] \, ds \nonumber \\
 & & \quad + 2 \int_{0}^{t\wedge {\tau }_{N}} {e}^{-r(s)} \dual{G({u}_{1}(s))-G({u}_{2}(s))}{U(s)\, dW(s)}{}
 \nonumber \\
& &  \le
 \int_{0}^{t\wedge {\tau }_{N}} {e}^{-r(s)} \bigl\{ -r^\prime (s) |U(s){|}_{H}^{2} -2 \norm{U(s)}{}{2}
 - 2 \dual{ B({u}_{1}(s))-B({u}_{2}(s)) }{U(s)}{} \bigr\} \, ds \nonumber \\
  & & \quad  + \int_{0}^{t\wedge {\tau }_{N}} {e}^{-r(s)} \norm{ G({u}_{1}(s))-G({u}_{2}(s)) }{\lhs (Y,H)}{2} \, ds   \nonumber \\
& & \quad  + 2 \int_{0}^{t\wedge {\tau }_{N}} {e}^{-r(s)} \dual{G({u}_{1}(s))-G({u}_{2}(s))}{U(s)\, dW(s)}{} .
  \label{E:2D_uniq_Ito}
\end{eqnarray}
We have
$$
   B({u}_{1}(s)) - B({u}_{2}(s)) = B({u}_{1}(s), U(s)) + B(U(s),{u}_{2}(s)), \qquad s \in [0,T].
$$
Thus by   (\ref{E:wirowosc_b})
$$
  \dual{B({u}_{1}(s))-B({u}_{2}(s))}{U(s)}{} = \dual{B(U(s),{u}_{2}(s))}{U(s)}{}
$$
and hence
\begin{eqnarray*}
 \bigl| \dual{B({u}_{1}(s))-B({u}_{2}(s))}{U(s)}{} \bigr|
  \le \sqrt{2} |U(s){|}_{H} \norm{U(s)}{}{} \norm{{u}_{2}(s)}{}{}, \quad s \in [0,T].
\end{eqnarray*}
Therefore for every $\eps > 0 $ there exists ${C}_{\eps }>0 $ such that
\begin{equation} \label{E:2D_uniq_B_est}
 2 \bigl| \dual{B({u}_{1}(s))-B({u}_{2}(s))}{U(s)}{} \bigr|
  \le \eps \norm{U(s)}{}{2} + {C}_{\eps } \norm{{u}_{2}(s)}{}{2} |U(s){|}_{H}^{2} , \qquad s \in [0,T] .
\end{equation}
Putting  $a:= {C}_{\eps }$,  we obtain
\[
  -r^\prime (s) |U(s){|}_{H}^{2} + {C}_{\eps } \norm{{u}_{2}(s)}{}{2} |U(s){|}_{H}^{2}
  = -a \norm{{u}_{2}(s)}{}{2} |U(s){|}_{H}^{2} + {C}_{\eps } \norm{{u}_{2}(s)}{}{2} |U(s){|}_{H}^{2}=0,
  \quad s \in [0,T]
\]
and hence by \eqref{E:2D_uniq_Ito}  and  \eqref{E:2D_uniq_B_est}
\begin{eqnarray}
 & & {e}^{-r(t\wedge {\tau }_{N})}{|U(t\wedge {\tau }_{N})|}_{H}^{2}
\le
(-2+\eps ) \int_{0}^{t\wedge {\tau }_{N}} {e}^{-r(s)}  \norm{U(s)}{}{2} \, ds \nonumber \\
  & &  + \int_{0}^{t\wedge {\tau }_{N}} {e}^{-r(s)} \norm{ G({u}_{1}(s))-G({u}_{2}(s)) }{\lhs (Y,H)}{2} \, ds  \nonumber \\
& &  + 2 \int_{0}^{t\wedge {\tau }_{N}} {e}^{-r(s)} \dual{G({u}_{1}(s))-G({u}_{2}(s))}{U(s)\, dW(s)}{},
\qquad t \in [0,T] .  \label{E:2D_NS_Ito_est}
\end{eqnarray}
By \eqref{E:2D_NS_Ito_est} and \eqref{E:2D_uniq_Lipsch} we obtain
\begin{eqnarray} \label{E:2D_uniq_est_0}
 & &  {e}^{-r(t\wedge {\tau }_{N})}{|U(t\wedge {\tau }_{N})|}_{H}^{2}
 +  (2- \eps - {L}^{2} ) \int_{0}^{t\wedge {\tau }_{N}} {e}^{-r(s)} \norm{U(s)}{}{2} \, ds
\le {L}^{2}\int_{0}^{t\wedge {\tau }_{N}} {e}^{-r(s)} {|U(s)|}_{H}^{2} \, ds  \nonumber \\
& & 
+2 \int_{0}^{t\wedge {\tau }_{N}} {e}^{-r(s)} \dual{G({u}_{1}(s))-G({u}_{2}(s))}{U(s)\, dW(s)}{} , \quad t \in [0,T].
\end{eqnarray}
Let us choose $\eps >0 $  such that $2-\eps - {L}^{2} >0$.
Then by (\ref{E:2D_uniq_est_0}), in particular, we have
\begin{eqnarray} \label{E:2D_uniq_est}
&&  {e}^{-r(t\wedge {\tau }_{N})}|U(t\wedge {\tau }_{N}){|}_{H}^{2}
\le {L}^{2}\int_{0}^{t\wedge {\tau }_{N}} {e}^{-r(s)} {|U(s)|}_{H}^{2} \, ds  \nonumber \\
&& + 2 \int_{0}^{t\wedge {\tau }_{N}} {e}^{-r(s)} \dual{G({u}_{1}(s))-G({u}_{2}(s))}{U(s)\, dW(s)}{} , \quad t \in [0,T] .
\end{eqnarray}
Let us denote
\[
{\mu } (t):= \int_{0}^{t} {e}^{-r(s)} \dual{G({u}_{1}(s))-G({u}_{2}(s))}{U(s)\, dW(s)}{} , \qquad t \in [0,T]
\]
and
\[
    {\mu }_{N} (t) := \mu (t \wedge {\tau }_{N}), \qquad t \in [0,T], \quad N \in \nat  .
\]
Since by \eqref{E:2D_uniq_Lipsch} and \eqref{E:2D_uniq_stopping}
\begin{eqnarray*}
 \hat{\e} \Bigl[ \int_{0}^{T} \ind{[0,{\tau }_{N}]} {|U(s)|}_{H}^{2}\norm{G({u}_{1}(s))-G({u}_{2}(s))}{\lhs (Y,H)}{2} \, ds \Bigr]
 \le N {L}^{2} \hat{\e} \Bigl[ \int_{0}^{T} \norm{{u}_{1}(s)-{u}_{2}(s)}{V}{2} \, ds \Bigr] < \infty ,
\end{eqnarray*}
for each $N \in \nat $ the process $\{ {\mu }_{N} (t) \} $ is a martingale.
Hence   by \eqref{E:2D_uniq_est_0}
  and the martingale property we obtain for all $t \in [0,T]$
\begin{eqnarray}
   \hat{\e } \bigl[ {e}^{-r(t\wedge {\tau }_{N})}{|U(t\wedge {\tau }_{N})|}_{H}^{2} \bigr]
  +    (2- \eps - {L}^{2} ) \int_{0}^{t\wedge {\tau }_{N}} \hat{\e } \bigl[{e}^{-r(s)} \norm{U(s)}{}{2} \bigr] \, ds 
 \le {L}^{2} \int_{0}^{t\wedge {\tau }_{N}}  \hat{\e } \bigl[{e}^{-r(s)} {|U(s)|}_{H}^{2} \bigr] \, ds. 
 \label{E:2D_uniq_Ito_final}
\end{eqnarray}
In particular, 
for all $t \in [0,T]$
\begin{eqnarray*}
  \hat{\e } \bigl[ {e}^{-r(t\wedge {\tau }_{N})}{|U(t\wedge {\tau }_{N})|}_{H}^{2} \bigr]
 \le {L}^{2} \int_{0}^{t\wedge {\tau }_{N}}  \hat{\e } \bigl[{e}^{-r(s)} {|U(s)|}_{H}^{2} \bigr] \, ds. 
\end{eqnarray*}
By the Gronwall lemma we infer that for all $t \in [0,T]$,
\begin{eqnarray}
\hat{\e } \bigl[ {e}^{-r(t\wedge {\tau }_{N})}{|U(t\wedge {\tau }_{N})|}_{H}^{2} \bigr] =0 
\label{E:2D_uniq_value=0}
\end{eqnarray} 
and hence, in particular,
$\int_{0}^{t\wedge {\tau }_{N}}  \hat{\e } \bigl[{e}^{-r(s)} {|U(s)|}_{H}^{2} \bigr] \, ds =0$
By \eqref{E:2D_uniq_Ito_final} and \eqref{E:2D_uniq_value=0}, we infer that 
for all $t \in [0,T]$
\begin{equation} \label{E:2D_uniq_int=0}
 \hat{\e } \Bigl[ \int_{0}^{t\wedge {\tau }_{N}} {e}^{-r(s)} \norm{U(s)}{}{2} \, ds  \Bigr] =0 .
\end{equation}
By the Schwarz, the Burkholder-Davis-Gundy inequalities  \eqref{E:2D_uniq_value=0} and \eqref{E:2D_uniq_int=0}, we obtain
\begin{eqnarray}
& &  \hat{\e } \bigl[ \sup_{t \in [0,T]} |{\mu }_{N} (t)| \bigr]
 \le C \hat{\e } \Bigl[ \Bigl( \int_{0}^{T\wedge {\tau }_{N}} {e}^{-2r(s)}|U(s){|}_{H}^{2} \norm{G({u}_{1}(s))-G({u}_{2}(s))}{\lhs (Y,H)}{2} \, ds {\Bigr) }^{\frac{1}{2}} \Bigr] \nonumber \\
 & & \le C L\hat{\e } \Bigl[ \Bigl( \sup_{s \in [0,T]}{e}^{-r(s\wedge {\tau }_{N})}{|U(s\wedge {\tau }_{N})|}_{H}^{2}
 \int_{0}^{T\wedge {\tau }_{N}} {e}^{-r(s)} \norm{U(s)}{V}{2} \, ds {\Bigr) }^{\frac{1}{2}} \Bigr]
  \nonumber \\
& & \le \frac{1}{2}   \hat{\e }\bigl[ \sup_{t \in [0,T]} {e}^{-r(t\wedge {\tau }_{N})}|U(t\wedge {\tau }_{N}){|}_{H}^{2} + \tilde{C} \hat{\e } \Bigl[
 \int_{0}^{T\wedge {\tau }_{N}} {e}^{-r(s)} \norm{U(s)}{V}{2} \, ds  \Bigr] \nonumber \\
& &  = \frac{1}{2}   \hat{\e }\bigl[ \sup_{t \in [0,T]} {e}^{-r(t\wedge {\tau }_{N})}|U(t\wedge {\tau }_{N}){|}_{H}^{2}  .
 \label{E:2D_uniq_BDG_est}
\end{eqnarray}
By \eqref{E:2D_uniq_est} and \eqref{E:2D_uniq_BDG_est}, we obtain
\[
  \hat{\e } \bigl[ \sup_{t \in [0,T]} {e}^{-r(t\wedge {\tau }_{N})}|U(t\wedge {\tau }_{N}){|}_{H}^{2} =0.
\]
Since $\hat{\p }$-a.s.  $\lim_{N \to \infty } {\tau }_{N} = T $ and   $\hat{\e }\bigl[ \int_{0}^{T}
 \norm{{u}_{2}(t)}{}{} \, dt \bigr] < \infty $ we infer that
$\hat{\p }$-a.s. for all $t \in [0,T]$, $U(t)=0$.
The proof of the lemma is thus complete.
\end{proof}

\bigskip
\begin{definition}  \rm
We say that problem  (\ref{E:NS})
has a \bf strong solution \rm iff for every stochastic basis $\bigl( \Omega , \fcal , \mathbb{F} , \p  \bigr) $ with a filtration $\mathbb{F}={\{ \ft \} }_{t \ge 0}$ and every cylindrical  Wiener process $ W(t)$ in a separable Hilbert space $Y$ defined on this stochastic basis
there exists
a progressively measurable process
$u: [0,T] \times \Omega \to H$ with $\p  $ - a.e. paths
$$
  u(\cdot , \omega ) \in \ccal \bigl( [0,T], {H}_{w} \bigr)
   \cap {L}^{2}(0,T;V )
$$
such that for all $ t \in [0,T] $ and all $v \in \vcal $:
\begin{eqnarray*}
 \ilsk{u(t)}{v}{H} +  \int_{0}^{t} \dual{\acal u(s)}{v}{} \, ds
+ \int_{0}^{t} \dual{B(u(s),u(s))}{v}{} \, ds & & \nonumber \\
  = \ilsk{{u}_{0}}{v}{H} +  \int_{0}^{t} \dual{f(s)}{v}{} \, ds
 + \Dual{\int_{0}^{t} G(u(s))\, d\hat{W}(s)}{v}{}  & &
\end{eqnarray*}
the identity holds $\hat{\p }$ - a.s.
\end{definition}

\bigskip \noindent
Let us recall two basic concepts of uniqueness of the solution, i.e. pathwise uniqueness and uniqueness in law, see \cite{Ikeda+Watanabe_1981},
\cite{Ondrejat_2004}.

\bigskip
\begin{definition}  \rm
We say that solutions of problem  (\ref{E:NS})
are \bf pathwise unique \rm iff the following condition holds:
\begin{eqnarray*}
& & \mbox{\it if } (\Omega ,\fcal , \fmath ,\p ,W,{u}^{i}), \, \,  i=1,2, \mbox{ \it  are  such solutions
  of problem  (\ref{E:NS})
  that } {u}^{i}(0) = {u}_{0}, \, \, i =1,2, \\
& &  \mbox{\it then $\p $-a.s. for all } t \in [0,T], \, \, \, {u}^{1}(t) = {u}^{2}(t).
\end{eqnarray*}
\end{definition}

\bigskip
\begin{definition}  \rm
We say that solutions of problem  (\ref{E:NS})
are \bf  unique in law \rm iff the following condition holds:
\begin{eqnarray*}
& & \mbox{\it if } ({\Omega }^{i},{\fcal }^{i}, {\fmath }^{i}, {\p }^{i},{W}^{i},{u}^{i}), \, \,  i=1,2, \mbox{ \it  are  such solutions
  of problem  (\ref{E:NS})
  that }  \\
& & {u}^{i}(0) = {u}_{0}, \, \, i =1,2, \mbox{ \it then  } {Law}_{{\p }^{1}}({u}^{1}) = {Law}_{{\p }^{2}}({u}^{2})
\end{eqnarray*}
where ${Law}_{{\p }^{1}}({u}^{i})$, $i=1,2$, are probability measures on $\ccal ([0,T],{H}_{w})\cap {L}^{2}(0,T;V)$.
\end{definition}

\bigskip
\begin{cor}
Let $d=2$ and assume that conditions (A.1)-(A.3) are satisfied.
Moreover, assume that the Lipschitz constant $L$ in (\ref{E:2D_uniq_Lipsch}) satisfies condition $L <\sqrt{2}$.
\begin{itemize}
\item[1)] There exists a pathwise unique strong solution of problem  (\ref{E:NS})
\item [2)] Moreover, if $\bigl( \Omega , \fcal , \mathbb{F} , \p  , W, u\bigr) $ is a strong solution
of problem  (\ref{E:NS})
then for $\p $-almost all $\omega \in \Omega $ the trajectory $u(\cdot , \omega )$ is equal almost everywhere to a continuous $H$-valued function defined on $[0,T]$.
\item[3)] The martingale solution  of problem  (\ref{E:NS}) is unique in law.
\end{itemize}
\end{cor}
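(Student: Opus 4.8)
The plan is to derive all three assertions from the combination of two facts already established for $d=2$: the existence of a martingale solution with finite energy (Theorem \ref{T:existence}) and the pathwise uniqueness of solutions under the hypothesis $L<2$ (Lemma \ref{L:2D_pathwise_uniqueness}). The bridge between weak (martingale) existence plus pathwise uniqueness on the one hand, and strong existence plus uniqueness in law on the other, is the infinite-dimensional analogue of the classical Yamada--Watanabe theorem, in the form established by Ondrej\'at \cite{Ondrejat_2004} (see also \cite{Ikeda_Watanabe_81}). Thus the corollary is essentially an application of this principle to the two lemmas.

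For assertion 1) I would first note that, since $d=2$, Theorem \ref{T:existence} provides a martingale solution $(\hat{\Omega},\hat{\fcal},\hat{\fmath},\hat{\p},\hat{W},u)$ of (\ref{E:NS}) satisfying
$$
  \hat{\e} \Bigl[ \sup_{t \in [0,T]} |u(t){|}_{H}^{2} + \int_{0}^{T}\norm{u(t)}{}{2} \, dt  \Bigr] < \infty ,
$$
so weak existence holds. Since moreover $L<2$, Lemma \ref{L:2D_pathwise_uniqueness} yields pathwise uniqueness. Invoking the Yamada--Watanabe theorem of \cite{Ondrejat_2004}, weak existence together with pathwise uniqueness produces, on any prescribed stochastic basis carrying a cylindrical Wiener process, a strong solution in the sense of the definition above; this strong solution is pathwise unique, again by Lemma \ref{L:2D_pathwise_uniqueness}. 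This establishes 1).

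Assertion 3) is the second half of the same principle: pathwise uniqueness (Lemma \ref{L:2D_pathwise_uniqueness}) combined with the existence of a martingale solution forces uniqueness in law of martingale solutions, by \cite{Ondrejat_2004}. For assertion 2) I would argue as follows. Let $(\Omega,\fcal,\fmath,\p,W,u)$ be any strong solution; it is in particular a martingale solution with $\p$-a.e. paths in $\ccal([0,T];H_w)\cap L^2(0,T;V)$. By the uniqueness in law just obtained, its law coincides with that of the solution constructed in Theorem \ref{T:existence}, whence the estimate (\ref{E:Th_2D_u_est}) is inherited. With (\ref{E:Th_2D_u_est}) at hand, Lemma \ref{L:2D_NS_regularity} applies directly and gives that $\p$-a.s. the trajectory $u(\cdot,\omega)$ agrees almost everywhere with a continuous $H$-valued function, proving 2).

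The main obstacle is not the formal chaining of the lemmas but the verification that the abstract Yamada--Watanabe/Ondrej\'at framework genuinely applies in the present nonmetric setting: one must check that the measurability and topological hypotheses of \cite{Ondrejat_2004} hold for the path space $\zcal$ equipped with the topology $\tcal$ (for which the separation-of-points property exploited in Corollary \ref{C:Skorokhod_Z} is the relevant structural input), and that the a priori bound (\ref{E:Th_2D_u_est}) is available for every solution to which Lemmas \ref{L:2D_NS_regularity} and \ref{L:2D_pathwise_uniqueness} are applied. Once these compatibility points are settled, the three statements follow with no further computation.
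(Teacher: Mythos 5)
Your proposal is correct and follows essentially the same route as the paper: assertions 1) and 3) are obtained by combining the martingale existence of Theorem \ref{T:existence} with the pathwise uniqueness of Lemma \ref{L:2D_pathwise_uniqueness} via Ondrej\'at's infinite-dimensional Yamada--Watanabe theorems, and assertion 2) follows from Lemma \ref{L:2D_NS_regularity}. In fact you are slightly more careful than the paper on point 2): the paper simply declares it a direct consequence of Lemma \ref{L:2D_NS_regularity}, whereas you correctly note that the moment bound (\ref{E:Th_2D_u_est}) required by that lemma is not part of the definition of a strong solution and must be transferred to an arbitrary strong solution via the uniqueness in law established in 3).
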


\bigskip
\begin{proof}
Since by Theorem  \ref{T:existence}
there exists a martingale solution and by Lemma \ref{L:2D_pathwise_uniqueness} it is pathwise unique,
assertions 1) and 3) follow from Theorems 2 and 12.1 in \cite{Ondrejat_2004}.
Assertion 2) is a direct consequence of Lemma \ref{L:2D_NS_regularity}.
\end{proof}

\bigskip
\appendix{\bf Appendix A}

\bigskip  \noindent
Let us assume that the mapping $G: V \to \lhs (Y,H)$ satisfies the following inequality
\begin{equation}
    2\dual{\acal u}{u}{}-\norm{G(u)}{\lhs (Y,H)}{2}
  \ge \eta \norm{u}{}{2} -{\lambda }_{0}{|u|}_{H}^{2} - \rho ,\qquad u\in V   \tag{G}
 \end{equation}
for some constants $\eta \in (0,2]$, ${\lambda }_{0} \geq 0$ and $\rho \in\mathbb{R} $.

\bigskip  \noindent
 Since $\norm{u}{}{}:= \norm{\nabla u}{{L}^{2}}{}$ and  $\dual{\acal u}{u}{} = \dirilsk{u}{u}{}:= \ilsk{\nabla u}{\nabla u}{{L}^{2}}$,
$$2\dual{\acal u}{u}{}-\eta \norm{u}{}{2}=(2-\eta ) \norm{u}{}{2}.$$
Hence  inequality (G) can be written equivalently in the following form
\begin{equation} \label{A:G_2'}
 \norm{G(u )}{\lhs (Y,H)}{2}
  \le (2-\eta ) \norm{u}{}{2}+{\lambda }_{0}{|u|}_{H}^{2}+\rho , \qquad u \in V . \tag{G$^\prime$}
\end{equation}
The following proof of Lemma \ref{L:Galerkin_estimates } is standard, see \cite{Flandoli+Gatarek_1995}. However, we provide all details  to indicate the importance of the assumption (\ref{eqn-p_cond}) on $p$.

\bigskip  \noindent
\bf Proof of estimates (\ref{E:H_estimate}), (\ref{E:HV_estimate}) and (\ref{E:V_estimate})
in Lemma \ref{L:Galerkin_estimates } under the assumption (G).\rm

\bigskip  \noindent
Let $p $ satisfy condition (\ref{eqn-p_cond}), i.e.
\begin{eqnarray*}
& & p\in  \bigl[ 2, 2+ \frac{\eta }{2-\eta } \bigr) \quad \mbox{ if $\quad \eta \in(0,2)$  } \\
& & p\in [2, \infty )  \quad \mbox{ if $\quad \eta =2 $}.
\end{eqnarray*}
We apply the It\^{o} formula to the function $F(x) = {|x|}^{p}:= |x{|}_{H}^{p}$, $x \in H$. Since
$$
 F^\prime(x)=d_xF= \frac{\partial F}{\partial x} = p \cdot {|x|}^{p-2} \cdot x  ,
\qquad \|  F^{\prime\prime}(x) \|= \bigl\|  d^2_xF \bigr\|= \biggl\| \frac{{\partial }^{2} F}{\partial  {x}^{2}} \biggr\| \le  p (p-1) \cdot {|x|}^{p-2} ,
\qquad x \in H,
$$
we have the following equalities
\begin{eqnarray*}
& & d \bigl[ {|\un (t)|}^{p}  \bigr]
  =  \Bigl[ p \,  {|\un (t)|}^{p-2} \dual{\un (t)}{- \acal \un (t)
       -  \Bn \bigl( \un (t) \bigr) + \Pn f(t)}{}  \\
 & & + \frac{1}{2} \tr
 \bigl[ F^{\prime\prime}( \un (t)\bigl(\Pn G (\un (t)),\Pn G (\un (t)) \bigr ) \bigr]
\Bigr]
     \, dt
  + p \,  {|\un (t)|}^{p-2} \dual{\un (t)}{ G ( \un (t) ) \, d W(t) }{} \\
& & =  \Bigl[ - p \, {|\un (t)|}^{p-2} \norm{\un (t)}{}{2}
   + p \, {|\un (t)|}^{p-2} \dual{\un (t)}{  f(t)}{}
   + \frac{1}{2} \tr \bigl[
   F^{\prime\prime}( \un (t)\bigl(\Pn G (\un (t)),\Pn G (\un (t)) \bigr )
          \bigr] \Bigr] \, dt \\
& & + p \,  {|\un (t)|}^{p-2} \dual{\un (t)}{ G ( \un (t) ) \, d W(t) }{} , \quad t \in [0,T].
\end{eqnarray*}
Thus
\begin{eqnarray*}
& & d \bigl[ {|\un (t)|}^{p}  \bigr]  + p \, {|\un (t)|}^{p-2} \norm{\un (t)}{}{2}
  \, \, \le   \, \, p \, {|\un (t)|}^{p-2} \dual{f}{\un (t)}{} \, dt  \\
& & \qquad  + \frac{1}{2} p(p-1) \, {|\un (t)|}^{p-2} \cdot \norm{\Pn G(\un (t))}{\lhs (Y,H)}{2}   \, dt \\
& & \qquad + p \,  {|\un (t)|}^{p-2} \dual{\un (t)}{ G ( \un (t) ) \, d W(t) }{} \quad t \in [0,T].
\end{eqnarray*}
By (\ref{A:G_2'})
$$
  \norm{\Pn G(\un (t))}{\lhs (Y,H)}{2}   \le (2- \eta ) \, \norm{\un (t)}{}{2}
  + {\lambda }_{0} {|\un (t)|}^{2} + \rho .
$$
Thus
\begin{eqnarray*}
& & d \bigl[ {|\un (t)|}^{p}  \bigr]  + p \, {|\un (t)|}^{p-2} \norm{\un (t)}{}{2}
  \, \, \le   \, \, p \, {|\un (t)|}^{p-2} \dual{f(t)}{\un (t)}{} \, dt  \\
& & \qquad  + \frac{1}{2} p(p-1) \, {|\un (t)|}^{p-2} \cdot \bigl[ (2- \eta ) \, \norm{\un (t)}{}{2}
  + {\lambda }_{0} {|\un (t)|}^{2} + \rho  \bigr]  \, dt \\
& & \qquad + p \,  {|\un (t)|}^{p-2} \dual{\un (t)}{ G ( \un (t) ) \, d W(t) }{} .
\end{eqnarray*}
Moreover, by (\ref{E:norm_V}) and the  Young  inequality with exponents $2,\frac{2p}{p-2}$ and $p$, we obtain
\begin{eqnarray*}
  \,  {|\un (t)|}^{p-2} \dual{f(t)}{\un (t)}{}
 &\leq &   \,  {|\un (t)|}^{p-2} \norm{\un (t)}{V}{}  \, {|f(t)|}_{V^\prime } \\
 & = & {|\un (t)|}^{p-2}   {({|\un (t)|}^{2}+\norm{\un (t)}{}{2})}^{\frac{1}{2}}
   \, {|f(t)|}_{V^\prime }
 \\
 &\leq &
 \frac{\eps}{2} ({|\un (t)|}^{2}+\norm{\un (t)}{}{2})\,  {|\un (t)|^{p-2}+(\frac12-\frac1p){|\un (t)|}^{p}+\frac{\eps^{-p/2}}{p}\, {|f(t)|}^p_{V^\prime } }
\\
 &\leq &
  \frac{\eps}{2}  \norm{\un (t)}{}{2}\,  {|\un (t)|^{p-2}+(\frac{1+\eps}{2}-\frac1p){|\un (t)|}^{p}+\frac{\eps^{-p/2}}{p}\, {|f(t)|}^p_{V^\prime } }
\end{eqnarray*}
Hence
\begin{eqnarray*}
 d \bigl[ {|\un (t)|}^{p}  \bigr] &+& \bigl[ p - p \frac{\eps}{2} - \frac{1}{2} p(p-1) (2- \eta ) \,   \bigr]
  \, {|\un (t)|}^{p-2} \norm{\un (t)}{}{2} \\
&    \le &   (\frac{p(1+\eps)}2-1){|\un (t)|}^{p}+\eps^{-p/2}\, {|f(t)|}^p_{V^\prime }
   + \frac{1}{2} p(p-1) \, {|\un (t)|}^{p-2} \cdot \bigl[
  {\lambda }_{0} {|\un (t)|}^{2} + \rho  \bigr]  \, dt \\
 & &\hspace{+2truecm}\lefteqn{ + p \,  {|\un (t)|}^{p-2} \dual{\un (t)}{ G ( \un (t) ) \, d W(t) }{} }
\\
&=&\big(\frac{\lambda_0}{2}p(p-1)+\frac{p(1+\eps)}2-1\big) \, {|\un (t)|}^{p} + \frac{\rho}{2} p(p-1)\, {|\un (t)|}^{p-2} +\eps^{-p/2}\, {|f(t)|}^p_{V^\prime }
\\
&&\hspace{+2truecm}\lefteqn{  + p \,  {|\un (t)|}^{p-2} \dual{\un (t)}{ G ( \un (t) ) \, d W(t) }{} }
\end{eqnarray*}
Let us choose $\eps \in (0,1) $ such that  $ \delta=\delta(p,\eta):=p - p \frac{\eps}2 - \frac{1}{2} p(p-1) (2- \eta ) >0 $,
or equivalently,
$$
  \eps <  1 \wedge [ 2 -  (p-1) (2-\eta )] .
$$
Notice that under  condition (\ref{eqn-p_cond}) such $\eps $ exists. Denote also 
\[
K_p({\lambda }_{0},\rho ):= \frac{{\lambda}_{0}}{2}p(p-1)+p-1+\rho p(1-\frac{2}{p})\frac{p-1}{2}
=\frac{p-1}{2} [{\lambda }_{0}p+2 +\rho (p-2)] .
\]
Thus, since by Young inequality $x^{p-2}\leq(1-\frac2p)x^p+\frac2p1^{p/2}$ for $x\geq 0$,   we obtain
\begin{equation}
\begin{array}{rcl}
  {|\un (t)|}^{p}   &+&\delta
  \, \int_{0}^{t}{|\un (s)|}^{p-2} \norm{\un (s)}{}{2} \, ds \\
  && \\
 & \le&    {|\un (0)|}^{p} +K_p({\lambda }_{0},\rho)
\int_{0}^{t} \, {|\un (s)|}^{p} \, ds
  +    \rho (p-1) t \,
  + \eps^{-p/2} \int_{0}^{t} {|f(t)|}_{V^{\prime }}^{p}\, ds
  \\
  && \\
& +& p \,  \int_{0}^{t}{|\un (s)|}^{p-2} \dual{\un (s)}{ G ( \un (s) ) \, d W(s) }{} ,
\quad t \in [0,T].
\end{array}  \tag{A.1}   \label{E:apriori}
\end{equation}

\bigskip  \noindent
By Lemma \ref{L:Galerkin_existence}, we infer that the process
$$
 {\mu }_{n}(t):= \int_{0}^{t} {|\un (s)|}^{p-2} \dual{\un (s)}{G(\un (s)) \, d W(s) }{} ,
 \qquad t \in [0,T]
$$
is a square integrable martingale and that $\e [{\mu }_{n} (t) ] = 0 $. Thus
\begin{equation}
\begin{array}{cc}
  \e \bigl[ {|\un (t)|}^{p} \bigr]
  \, \, &\le   \, \, \e \bigl[ {|\un (0)|}^{p} \bigr] +
K_p({\lambda }_{0},\rho)\int_{0}^{t} \,
 \e \bigl[ {|\un (s)|}^{p} \bigr] \, ds
 \nonumber \\
 & \nonumber \\
  &+   \rho (p-1) t
  + \eps^{-p/2} \e \int_{0}^{t} {|f(s)|}_{V^{\prime }}^{p}\, ds, \quad t \in [0,T].
  \end{array}  \tag{A.2} \label{E:apriori'}
\end{equation}
Hence by the Gronwall Lemma there exists a constant $C=C_p(  |u_0 |^p, \int_{0}^{T} {|f(s)|}_{V^{\prime }}^{p}\, ds )>0$ such that
\begin{equation} \label{E:app_H_est}
  \e \bigl[ {|\un (t)|}^{p} \bigr]  \le C , \qquad \forall \, t \in [0,T] \quad \forall \, n \ge 1
  \tag{A.3}
\end{equation}
Using this bound in (\ref{E:apriori}) we also obtain
\begin{equation} \label{E:app_HV_est}
 \e \biggl[ \int_{0}^{T}{|\un (s)|}^{p-2} \norm{\un (s)}{}{2} \, ds  \biggr]
  \le C , \qquad n \ge 1  \tag{A.4}
\end{equation}
for a new constant  $C=\tilde C_p(\eta,  |u_0 |^p, \int_{0}^{T} {|f(s)|}_{V^{\prime }}^{p}\, ds )>0$. This completes the proof of estimates (\ref{E:HV_estimate}) and (\ref{E:V_estimate}).

\bigskip  \noindent
By the Burkholder-Davis-Gundy inequality, see \cite{DaPrato_Zabczyk_Erg}, inequality (\ref{A:G_2'}) and estimates (\ref{E:app_H_est}) and (\ref{E:app_HV_est}), we have the following inequalities (with still another constant $C>0$)
\begin{eqnarray*}
 & & \e \biggl[ \sup_{0 \le s \le t}
 \biggl| \int_{0}^{s} p \, {|\un (\sigma )|}^{p-2} \dual{\un (\sigma )}{\Pn G ( \un (\sigma ) ) \, d W(\sigma ) }{}
  \biggr| \biggr] \\
& &\le {C}_{p} \cdot \e \biggl[
 {\biggl( \int_{0}^{t} \, { |\un (\sigma )|}^{2p-2} \cdot
 \norm{  G ( \un (\sigma ) ) }{\lhs (Y,H)}{2}
   \, d\sigma
  \biggr) }^{\frac{1}{2}} \biggr]   \\
& &\le {C}_{p} \cdot \e \biggl[ \sup_{0\le \sigma \le t } {|\un (\sigma )|}^{\frac{p}{2}}
 {\biggl( \int_{0}^{t} \,  { |\un (\sigma )|}^{p-2} \cdot
 \norm{  G ( \un (\sigma ) ) }{\lhs (Y,H)}{2}
   \, d\sigma
  \biggr) }^{\frac{1}{2}} \biggr]   \\
& &\le {C}_{p} \cdot \e \biggl[ \sup_{0\le \sigma \le t } {|\un (\sigma )|}^{\frac{p}{2}}
 {\biggl( \int_{0}^{t} \,  { |\un (\sigma )|}^{p-2} \cdot
 \bigl[ {\lambda }_{0} \,  {|\un (\sigma )|}^{2} + \rho  + (2- \eta ) \norm{\un (\sigma )}{}{2} \bigr]
   \, d\sigma
  \biggr) }^{\frac{1}{2}} \biggr]   \\
& &\le \frac{1}{2} \e \bigl[ \sup_{0\le s \le t } {|\un (s )|}^{p} \bigr]
 + \frac{1}{2} {C}^{2}_{p} \biggl( {\lambda }_{0} +\rho \bigl( 1-\frac{2}{p}\bigr)\biggr)    \cdot
\e \biggl[ \int_{0}^{t} \, \sup_{0 \le s \le \sigma }{ |\un (s )|}^{p} \, d\sigma \biggr]
 + C
\end{eqnarray*}
Thus, by (\ref{E:apriori'}) we have
\begin{eqnarray*}
& & \e \bigl[ \sup_{0 \le s \le t }{|\un (s)|}^{p} \bigr]
   \le    \e \bigl[ {|\un (0)|}^{p} \bigr] +
K_p({\lambda }_{0},\rho) \int_{0}^{t} \,
 \e \bigl[ {\sup_{0 \le r \le s}|\un (r)|}^{p} \bigr] \, ds \\
& & +
   \rho (p-1) t
  + \eps^{-p/2} \e \int_{0}^{t} {|f(s)|}_{V^{\prime }}^{p}\, ds
 + \frac{1}{2} \e \bigl[ \sup_{0\le s \le t } {|\un (s )|}^{p} \bigr] \\
& & + \frac{1}{2} {C}^{2}_{p} \biggl( {\lambda }_{0} +\rho \bigl( 1-\frac{2}{p}\bigr)\biggr)    \cdot
 \int_{0}^{t} \, \e \bigl[\sup_{0 \le s \le \sigma }{ |\un (s )|}^{p}\bigr] \, d\sigma
 + C  , \quad t \in [0,T].
\end{eqnarray*}
By the Gronwall Lemma, we obtain (\ref{E:H_estimate}), i.e.
$$
  \e \bigl[ \sup_{0 \le s \le t }{|\un (s)|}^{p} \bigr]   \le {C}_{1}(p)
$$
for some constant ${C}_{1}(p)>0$.
This completes the proof of estimates (\ref{E:H_estimate}), (\ref{E:HV_estimate}) and (\ref{E:V_estimate})
in Lemma \ref{L:Galerkin_estimates }. \qed

\bigskip
\appendix{\bf Appendix B}

\bigskip \noindent
The following lemma is a generalization of the result contained in Corollary 5.3 in \cite{Brz+Li_2006} for 2D domains bounded in some direction \footnote{A domain $\ocal \subset \rd $ is bounded in some direction if there exists a nonzero vector $h \in \rd $ such that $\ocal \cap (h+\ocal )= \emptyset $. Here $h+\ocal := \{ h + x , \, x \in \ocal  \} $.}. Here, we consider any 2D or 3D domain, possibly unbounded, with smooth boundary. Moreover, we impose weaker assumptions on the sequence ${(\un )}_{n} $.

\bigskip  \noindent
\bf Lemma B.1 \it 
Let $u \in {L}^{2}(0,T;H)$ and let  ${(\un )}_{n} $ be a bounded sequence in ${L}^{2}(0,T;H)$ such that $\un \to u $ in ${L}^{2}(0,T;{H}_{loc})$. Let $\gamma > \frac{d}{2}+1$.
Then for all $t \in [0,T]$ and all $\psi \in {V}_{\gamma }$:
\begin{equation} \label{E:B_conv_aux_V_gamma}
  \nlim \int_{0}^{t}  \dual{ B \bigl( \un (s) ,\un (s) \bigr)  }{ \psi }{}\, ds
  = \int_{0}^{t}  \dual{ B \bigl( u (s),u(s)  \bigr)  }{\psi }{}\, ds .  \tag{B.1}
\end{equation}
Here $\dual{\cdot }{\cdot }{}$ denotes the dual pairing between ${V}_{\gamma }$ and ${V}_{\gamma }^{\prime }$.
\rm

\bigskip
\begin{proof}
 Assume first that $\psi \in \vcal $. Then there exists $R>0$ such that $\supp \psi $ is a compact subset of ${\ocal }_{R}$.
Then, using the integration by parts formula, we infer that for every $v ,w \in H $
\begin{equation} \label{E:estimate_B(O_R)_ext}
 | \dual{B(v,w)}{\psi }{} | = \biggl| \int_{{\ocal }_{R}} ( v  \cdot \nabla \psi ) w \, dx \biggr|
  \le \norm{u}{{L}^{2}({\ocal }_{R})}{}  \norm{w}{{L}^{2}({\ocal }_{R})}{}
  \norm{\nabla \psi }{{L}^{\infty }({\ocal }_{R})}{}
  \le C |u{|}_{{H}_{{\ocal }_{R}}}  |w{|}_{{H}_{{\ocal }_{R}}} \norm{\psi }{{V}_{\gamma }}{} .
 \tag{B.2}
\end{equation}
We have
$ B(\un , \un ) - B(u,u) =  B(\un -u , \un ) + B(u,\un -u) $.
Thus, using the estimate (\ref{E:estimate_B(O_R)_ext}) and the H\"{o}lder inequality, we obtain
\begin{eqnarray*}
& &\Bigl| \int_{0}^{t} \dual{ B \bigl( \un (s) ,\un (s) \bigr) }{\psi }{} \, ds
- \int_{0}^{t} \dual{B \bigl( u(s), u(s)\bigr)  }{\psi }{} \, ds\Bigr|  \\
& &\le \Bigl| \int_{0}^{t} \dual{ B \bigl( \un (s) - u(s) ,\un (s) \bigr) \ }{\psi }{} , ds \Bigr|
  + \Bigl| \int_{0}^{t} \dual{ B \bigl( u (s) ,\un (s) - u(s) \bigr)  }{\psi }{} \, ds \Bigr| \\
& &\le \Bigl( \int_{0}^{t}   |\un (s) -u(s) {|}_{{H}_{{\ocal }_{R}}}  |\un (s){|}_{{H}_{{\ocal }_{R}}} \, ds
   + \int_{0}^{t}   |u(s) {|}_{{H}_{{\ocal }_{R}}}  |\un (s) - u(s){|}_{{H}_{{\ocal }_{R}}} \, ds  \Bigr)
 \cdot \norm{\psi }{{V}_{\gamma }}{}  \\
& & \le C
\norm{\un - u }{{L}^{2}(0,T;{H}_{{\ocal }_{R}})}{} \bigl( \norm{\un  }{{L}^{2}(0,T;{H}_{{\ocal }_{R}})}{}
+\norm{  u }{{L}^{2}(0,T;{H}_{{\ocal }_{R}})}{} \bigr)
\norm{\psi }{{V}_{\gamma }}{} ,
\end{eqnarray*}
where $C$ stands for some positive constant.
Since $\un \to u $ in ${L}^{2}(0,T;{H}_{loc})$, we infer that (\ref{E:B_conv_aux_V_gamma}) holds for every $\psi \in \vcal $.

\bigskip  \noindent
If $\psi \in {V}_{\gamma }$ then for every  $\eps > 0 $ there exists ${\psi }_{\eps } \in \vcal $
such that $\norm{\psi - {\psi }_{\eps }}{{V}_{\gamma }}{} \le \eps $.
Then
\begin{eqnarray*}
& &\bigl| \dual{ B\bigl( \un (\sigma ) ,\un (\sigma ) \bigr) - B\bigl( u(\sigma ) ,u (\sigma ) \bigr) }{\psi }{} \bigr| \\
 & & \le  \bigl| \dual{ B\bigl( \un (\sigma ) ,\un (\sigma )\bigr) - B\bigl( u(\sigma ) ,u (\sigma ) \bigr) }{
\psi -{\psi }_{\eps }}{} \bigr|
 + \bigl| \dual{ B\bigl( \un (\sigma ) ,\un (\sigma )\bigr) - B\bigl( u(\sigma ) ,u (\sigma )\bigr)  }{{\psi }_{\eps } }{} \bigr| \\
& & \le  \bigl( \bigl| B\bigl( \un (\sigma ) ,\un (\sigma ) \bigr){\bigr| }_{{V}_{\gamma }^{\prime }}
  + \bigl| B\bigl( u (\sigma ) ,u (\sigma ) \bigr) {\bigr| }_{{V}_{\gamma }^{\prime }} \bigr)
   \norm{\psi -{\psi }_{\eps }}{{V}_{\gamma }}{}  \\
 & &  + \bigl| \dual{ B\bigl( \un (\sigma ) ,\un (\sigma )\bigr) - B\bigl( u(\sigma ) ,u (\sigma )\bigr)  }{{\psi }_{\eps } }{} \bigr| \\
 & & \le \eps  \bigl( |\un (\sigma ){|}_{H}^{2}
   +|u (\sigma ){|}_{H}^{2}\bigr)
 + \bigl| \dual{ B\bigl( \un (\sigma ) ,\un (\sigma )\bigr)
   - B\bigl( u(\sigma ) ,u (\sigma )\bigr)  }{{\psi }_{\eps } }{} \bigr| .
\end{eqnarray*}
Hence
\begin{eqnarray*}
& &\bigl| \int_{s}^{t} \dual{ B\bigl( \un (\sigma ) ,\un (\sigma ) \bigr) - B\bigl( u(\sigma ) ,u (\sigma ) \bigr) }{\psi }{}  \, d\sigma \bigr|
\\
& & \le  \eps \int_{0}^{t}\bigl(  |\un (\sigma ){|}_{H}^{2} +  |u (\sigma ){|}_{H}^{2}\bigr) \, d \sigma
 + \bigl| \int_{s}^{t}\dual{ B\bigl( \un (\sigma ) ,\un (\sigma )\bigr) - B\bigl( u(\sigma ) ,u (\sigma )\bigr)  }{{\psi }_{\eps } }{} \, d\sigma \bigr| \\
& & \le \eps \cdot \bigl( \sup_{n\ge 1}\norm{\un }{{L}^{2}(0,T;H)}{2} +\norm{u }{{L}^{2}(0,T;H)}{2} \bigr)
+ \bigl| \int_{s}^{t}\dual{ B\bigl( \un (\sigma ) ,\un (\sigma )\bigr) - B\bigl( u(\sigma ) ,u (\sigma )\bigr)  }{{\psi }_{\eps } }{} \, d\sigma \bigr|
.
\end{eqnarray*}
Passing to the upper limit as $n \to \infty $, we obtain
$$
 \limsup_{n \to \infty }
\bigl| \int_{s}^{t} \dual{ B\bigl( \un (\sigma ) ,\un (\sigma ) \bigr) - B\bigl( u(\sigma ) ,u (\sigma ) \bigr) }{\psi }{}  \, d\sigma \bigr|
 \le   M \eps ,
$$
where $M:=\sup_{n\ge 1}\norm{\un }{{L}^{2}(0,T;H)}{2} +\norm{u }{{L}^{2}(0,T;H)}{2}<\infty $.
Since $\eps >0$ is arbitrary, we infer that
$$
 \lim_{n \to \infty }
 \int_{s}^{t} \dual{ B\bigl( \un (\sigma) ,\un (\sigma ) \bigr) - B\bigl( u(\sigma ) ,u (\sigma ) \bigr) }{\psi }{}  \, d\sigma =0.
$$
This completes the proof.
\end{proof}

\bigskip  \noindent
In the following corollary we state a result which is used  in the proof of Lemma \ref{L:pointwise_conv}.

\bigskip  \noindent
\bf Corollary B.2 \it 
Let $u \in {L}^{2}(0,T;H)$ and let  ${(\un )}_{n} $ be a bounded sequence in ${L}^{2}(0,T;H)$ such that $\un \to u $ in ${L}^{2}(0,T;{H}_{loc})$.
Then for all $t \in [0,T]$ and all $\psi \in U$:
\begin{equation} \label{E:B_conv_aux}
  \nlim \int_{0}^{t} \dual{ B \bigl( \un (s) ,\un (s) \bigr)  }{\Pn \psi }{} \, ds
  = \int_{0}^{t} \dual{ B \bigl( u (s),u(s)  \bigr)  }{\psi }{}\, ds .  \tag{B.3}
\end{equation}

\bigskip
\begin{proof}
Let $t \in [0,T]$ and let $\psi \in U$. We have
\begin{eqnarray*}
& &\int_{0}^{t}  \dual{ B \bigl( \un (s),\un (s)  \bigr) }{\Pn \psi }{} \, ds
 \\
& & = \int_{0}^{t} \dual{ B \bigl( \un (s),\un (s)  \bigr)  }{\Pn  \psi -\psi }{} \, ds
 + \int_{0}^{t} \dual{ B \bigl( \un (s),\un (s) \bigr) }{\psi }{} \, ds
 =: {I}_{1}(n) + {I}_{2}(n)  .
\end{eqnarray*}
Let $\gamma > \frac{d}{2}+1$.
Let us consider  first the term ${I}_{1}(n)$.
By (\ref{E:estimate_B_ext}), we have the following inequalities
\begin{eqnarray*}
& & |{I}_{1}(n)|
 \le \int_{0}^{t} \bigl| B \bigl( \un (s),\un (s)  \bigr)
 {\bigr| }_{{V}_{\gamma }^{\prime } }\, ds
 \cdot {\| \Pn \psi -\psi \| }_{{V}_{\gamma }}
 \le c\int_{0}^{T} |\un (s){|}_{H}^{2}\, ds  \cdot {\|\Pn \psi -\psi \|}_{{V}_{\gamma }}.
\end{eqnarray*}
Since the sequence ${(\un )}_{n \ge 1 } $ is bounded in ${L}^{2}(0,T;H)$, by (ii) in Lemma \ref{L:P_n|U} (c) we infer that
$$
 \lim_{n \to \infty } {I}_{1}(n) = 0 .
$$
Since $U \subset {V}_{\gamma }$, by  Lemma B.1 we infer that
$$
   \lim_{n \to \infty } {I}_{2}(n)  = \int_{0}^{t} \dual{ B \bigl( u (s),u(s)  \bigr)  }{\psi }{}\, ds.
$$
The proof of the lemma is thus complete.
\end{proof}

\bigskip
\appendix{\bf Appendix C: Some auxiliary results from functional analysis}

\bigskip
\noindent  \rm
The following result can be found in Holly and Wiciak, \cite{Holly_Wiciak_1995}.

\bigskip \noindent
\bf Lemma C.1 \rm \label{L:2_5_Holly_Wiciak} \rm (see Lemma 2.5, p.99 in \cite{Holly_Wiciak_1995}) \it
Consider a separable Banach space $\Phi $ having the following property
\begin{equation} \label{E:2_6_Holly_Wiciak}
  \mbox{there exists a Hilbert space $H$ such that $\Phi \subset H$ continuously. }
  \tag{C.1}
\end{equation}
Then there exists a Hilbert space $\bigl( \hcal , (\cdot |\cdot {)}_{\hcal } \bigr) $ such that
$\hcal \subset \Phi$, $\hcal $ is dense in $\Phi $ and the embedding $\hcal \hookrightarrow \Phi $ is compact.  \rm

\begin{proof}
Without loss of generality we can assume that $\dim \Phi = \infty $ and $\Phi $ is dense in $H$.
Since $\Phi $ is separable, there exists a sequence $({\varphi }_{n}{)}_{n \in \nat } \subset \Phi $
linearly dense in $\Phi $.
Since $\Phi $ is dense in $H$ and the embedding $\Phi \hookrightarrow H$ is continuous,
the subspace
 $span \{ {\varphi }_{1}, {\varphi }_{2},... \}  $  is dense in $H$.
After the orthonormalization of $({\varphi }_{n})$ in the Hilbert space
$\bigl( H, (\cdot |\cdot {)}_{H} \bigr) $ we obtain an orthonormal basis $({h }_{n})$ of this space.
Furthermore, the sequence $({h }_{n})$ is linearly dense in $\Phi $.
Since the natural embedding $\iota : \Phi \hookrightarrow H$ is continuous, we infer that
$$
  1 = |{h}_{n} {|}_{H} = |\iota ({h}_{n}){|}_{H} \le |\iota | \cdot |{h}_{n} {|}_{\Phi }
$$
and $ \frac{1}{|{h}_{n} {|}_{\Phi }} \le |\iota |  \mbox{ for all } n \in \nat .$

\bigskip  \noindent
Let us take ${\eta }_{0} \in (0,1)$ and define inductively a sequence $({\eta }_{n}{)}_{n \in \nat }$ by
$$
   {\eta }_{n} : = \frac{{\eta }_{n-1} +1}{2} , \qquad n=1,2,...
$$
The sequence $({\eta }_{n} )$ is strongly increasing and $\nlim {\eta }_{n} =1$.
Let us define a sequence $({r}_{n}{)}_{n \in \nat }$ by
$$
   {r}_{n} := \frac{1- {\eta }_{n}}{2 |{h}_{n}{|}_{\Phi }} > 0 ,  \qquad n=1,2,...
$$
Obviously $\nlim {r}_{n} =0$.
Let us consider the set
$$
 \hcal := \biggl\{ x \in H : \quad \sum_{n=1}^{\infty }
  \frac{1}{{r}_{n}^{2}} \cdot |(x|{h}_{n}{)}_{H}{|}^{2} < \infty  \biggr\}
$$
and the Hilbert space ${L}_{\mu }^{2} ({\nat }^{\ast } ,\mathbb{K} )$, where
$\mu : {2}^{{\nat }^{\ast }} \to [0,\infty ]$ is the measure given by the formula
$$
    \mu (M) := \sum_{n \in M } \frac{1}{{r}_{n}^{2}}, \qquad M \subset {\nat }^{\ast } .
$$
The linear operator
$$
   l : {L}_{\mu }^{2} ({\nat }^{\ast }, \mathbb{K} )  \ni \xi \mapsto
    \sum_{n=1}^{\infty } {\xi }_{n} {h}_{n} \in H
$$
is well defined. Moreover, $l$ is an injection and hence we may introduce the following inner product
$$
 (\cdot |\cdot {)}_{\hcal } := (\cdot |\cdot {)}_{{L}^{2}} \, \underline{\circ } \,  {l}^{-1}
  : \hcal \times \hcal \ni (x,y) \mapsto ({l}^{-1}x|{l}^{-1}y{)}_{{L}^{2}} \in \mathbb{K} .
$$
Now, $l$ is an isometry onto the pre-Hilbert  space $(\hcal , (\cdot |\cdot {)}_{\hcal }) $
and consequently $\hcal $ is $(\cdot |\cdot {)}_{\hcal }$-complete.
Let us notice that for all $x,y\in \hcal $
$$
   (x|y {)}_{\hcal } = \sum_{n=1}^{\infty }  \frac{1}{{r}_{n}^{2}} \cdot (x|{h}_{n}{)}_{H} (y|{h}_{n}{)}_{H},
   \qquad |x{|}_{\hcal }^{2} = \sum_{n=1}^{\infty } \frac{1}{{r}_{n}^{2}} \cdot |(x|{h}_{n}{)}_{H}{|}^{2}
$$
We will show that $\hcal \subset \Phi $ continuously.
Indeed, let $x\in \hcal $, $|x{|}_{\hcal } \le 1 $. Then
$$
 |(x|{h}_{i}){h}_{i}{|}_{\Phi } = |(x|{h}_{i})| \cdot |{h}_{i}{|}_{\Phi }
 \le {r}_{i} |{h}_{i}{|}_{\Phi } = \frac{1-{\eta }_{i-1}}{2|{h}_{i}{|}_{\Phi } } |{h}_{i}{|}_{\Phi }
 = \frac{1-{\eta }_{i-1}}{2}  = {\eta }_{i} - {\eta }_{i-1}, \qquad i \in \nat .
$$
Thus, for any $k,n \in \nat $, $k< n$, we have the following estimate
$$
 \Bigl| \sum_{i=k+1}^{n}(x|{h}_{i}){h}_{i}{\Bigr| }_{\Phi }
 \le \sum_{i=k+1}^{n} ({\eta }_{i} - {\eta }_{i-1}) = {\eta }_{n} - {\eta }_{k}.
$$
Since in particular, the sequence $\bigl( {s}_{n}:= \sum_{i=1}^{n}(x|{h}_{i}){h}_{i} \bigr) $
is Cauchy in the Banach space $(\Phi , |\cdot {|}_{\Phi })$,
there exists $\varphi \in \Phi $ such that $\nlim | {s}_{n}-\varphi  {|}_{\Phi }=0$.
On the other hand, ${s}_{n} = \sum_{i=1}^{n}(x|{h}_{i}){h}_{i} \to x $ in $H$.
Thus by the uniqueness of the limit $\varphi =x \in \Phi $ and
$$
  \sum_{i=1}^{n}(x|{h}_{i}){h}_{i} \to x \quad \mbox{ in $\Phi $ } .
$$
Moreover,
$$
 |x{|}_{\Phi } \underset{\infty \leftarrow n}{\longleftarrow } |{s}_{n}{|}_{\Phi }
 \le {\eta }_{n} - {\eta }_{0}  \underset{n \to \infty }{\longrightarrow } 1 - {\eta }_{0} .
$$
Thus $\hcal \subset \Phi $ continuously (with the norm of the embedding not  exceeding $1 - {\eta }_{0}$).

\bigskip \noindent
We will show that  the embedding $j: \hcal \hookrightarrow \Phi $ is compact.
It is sufficient to prove that the ball $Z:= \{ x \in \hcal :|x{|}_{\hcal } \le 1 \} $
is relatively compact in $(\Phi , |\cdot {|}_{\Phi }) $.
According to the Hausdorff Theorem  it is sufficient to find (for every  fixed $\eps $)
an $\eps $-net of the set $j(Z)$.

\bigskip  \noindent
Since $\lim_{n \to \infty }{\eta }_{n}=1$, there exists $n \in \nat $ such that $1 - {\eta }_{n} \le \frac{\eps }{2}$.
The linear operator
$$
  {S}_{n} : \hcal \ni x \mapsto \sum_{i=1}^{n}(x|{h}_{i}){h}_{i}  \in \Phi
$$
being finite-dimensional is compact. Therefore ${S}_{n}(Z)$ is relatively compact in $(\Phi , |\cdot {|}_{\Phi }) $
and consequently there is a finite subset $F \subset \Phi $ such that
${S}_{n}(Z) \subset \bigcup_{\varphi \in Z } {\ball }_{\Phi }(\varphi , \frac{\eps }{2})$.

\bigskip  \noindent
We will show that the set $F$ is the $\eps $-net for $j(Z)$. Indeed, let $x \in Z$.
Then ${S}_{N}(x) \to x $ in $(\Phi , |\cdot {|}_{\Phi }) $ and
$$
 |x - {S}_{n}(x){|}_{\Phi } \underset{\infty \leftarrow N}{\longleftarrow } |{S}_{N}(x)- {S}_{n}(x){|}_{\Phi }
 \le {\eta }_{N} - {\eta }_{n}  \underset{N \to \infty }{\longrightarrow } 1 - {\eta }_{n} \le \frac{\eps }{2} .
$$
On the other hand, ${S}_{n}(x) \in {S}_{n}(Z)$, so, there is $\varphi \in F $ such that
${S}_{n}(x) \in {\ball }_{\Phi }(\varphi , \frac{\eps }{2})$.
Finally,
$$
 |x - \varphi {|}_{\Phi } \le |x - {S}_{n}(x){|}_{\Phi } + |{S}_{n}(x) - \varphi {|}_{\Phi }
 \le \frac{\eps }{2} + \frac{\eps }{2} = \eps ,
$$
i.e. $x \in {\ball }_{\Phi }(\varphi ,\eps )$. Thus
$$
 Z \subset \bigcup_{\varphi \in \Phi } {\ball }_{\Phi }(\varphi ,\eps ) .
$$
The proof is thus complete.
\end{proof}

\bigskip
\appendix{\bf Appendix D: Auxiliary deterministic result}

\bigskip  \noindent
Assume that $d=2$.
Let $z \in {L}^{2}(0,T,V) \cap \ccal ([0,T];H)$, $f\in {L}^{2}(0,T;V^{\prime })$ and ${u}_{0}\in H $ be given.  Let us consider the following problem
\begin{equation}  \label{E:appendix_2D_NS_shift}
\begin{cases}
   & \frac{dv(t)}{dt} =   - A v(t) +v(t)+z(t) -  B (v(t)+z(t))  + f(t) , \qquad t \in (0,T) ,   \\
   &  v(0) = {u}_{0}.  
\end{cases}   \tag{D.1}
\end{equation}

\bigskip  \noindent
\bf Definition D.1 \rm
We say that $v \in {L}^{2}(0,T,V) \cap \ccal ([0,T];{H}_{w})$ is a \bf weak solution \rm of problem
(\ref{E:appendix_2D_NS_shift}) iff
for  all $t\in [0,T] $ and all $\phi \in V $ the following equality holds
\begin{equation}
\begin{array}{rrr}
  \ilsk{v(t)}{\phi }{H} = \ilsk{{u}_{0}}{\phi }{H} - \int_{0}^{t} \ilsk{v(s)}{\phi }{V} \, ds
   + \int_{0}^{t} \ilsk{v(s)+z(s)}{\phi }{H} \, ds  & & \nonumber \\
   & & \nonumber \\
 - \int_{0}^{t} \dual{B (v(s)+z(s))}{\phi }{} \, ds
   + \int_{0}^{t} \dual{f(s)}{\phi }{} \, ds . & &  \tag{D.2}\label{E:appendix_2D_NS_shift_identity}
\end{array}
\end{equation}

\bigskip  \noindent
Using the Galerkin method and the compactness criterion contained in Lemma \ref{L:comp}
we will prove the existence and uniqueness of the weak solutions.
Moreover, using Lemma III.1.2 in \cite{Temam79} we show that the weak solution is almost everywhere equal to a continuous $H$-valued function.

\bigskip  \noindent
\bf Theorem D.2 \it  \label{T:App_D_existence_unique}
Assume that $d=2$.
Let $z \in {L}^{2}(0,T,V) \cap \ccal ([0,T];H)$, $f\in {L}^{2}(0,T;V^{\prime })$ and ${u}_{0}\in H $.
\begin{itemize}
\item[(a)] There exists a unique weak solution  of problem (\ref{E:appendix_2D_NS_shift}).
\item[(b)] The weak solution is almost everywhere equal to a continuous $H$-valued function defined on $[0,T]$.
\end{itemize}
\rm

\begin{proof}
\bigskip  \noindent
Let $\{ {e}_{i} {\} }_{i =1}^{\infty  }$ be the orthonormal basis in $H$ composed of eigenvectors of the operator $L$ defined by
(\ref{E:op_L}).
Let ${H}_{n}:= span \{ {e}_{1},  ..., {e}_{n} \} $ be the subspace with the norm inherited from $H$ and
let $\Pn : U^{\prime } \to {H}_{n} $  be defined by
(\ref{E:tP_n}).
Let us consider the Faedo-Galerkin approximation in the space ${H}_{n}$
\begin{equation} \label{E:2D_NS_shift_Galerkin}
\begin{cases}
 &   \frac{d \vn (t)}{dt} = \Pn  [- A \vn (t) +\vn (t)+ z(t)-B(\vn (t)+z(t)) +f(t)],
\\
 &   \vn (0) = \Pn {u}_{0}.  
\end{cases} \tag{D.3}
\end{equation}
In a standard way we infer that there exists a unique absolutely continuous local solution of problem (\ref{E:2D_NS_shift_Galerkin}). Moreover, by Lemma
\ref{L:A_acal_rel}
(a), we obtain 
\begin{equation} \label{E:app_2D_derivative}
 \frac{d}{dt} |\vn (t){|}_{H}^{2} = - 2 \norm{\vn (t)}{}{2} + 2 \ilsk{z(t)}{\vn (t)}{H}
   - 2\dual{B(\vn (t) + z(t))}{\vn (t)}{} + 2 \dual{f(t)}{\vn (t)}{} .  \tag{D.4}
\end{equation}
\it Step 1. \rm We establish some estimates on the Galerkin solutions $\vn $.
Let us fix  $n \in \nat $ and $t\in (0,T)$.
We have the following inequalities
\begin{equation}
\begin{array}{lll}
 & & 2 \ilsk{z(t)}{\vn (t)}{H} \le 2 |z(t){|}_{H}^{} |\vn (t){|}_{H}
  \le |z(t){|}_{H}^{2} + |\vn (t){|}_{H}^{2} , \tag{D.5} \label{E:app_2D_z_vn_est}
\end{array}
\end{equation}
and
\begin{equation}
 2 \dual{f(t)}{\vn (t)}{} \le 2 |f(t){|}_{V^\prime } \norm{\vn (t)}{V}{}
     \le |f(t){|}_{V^\prime }^{2} + |\vn (t){|}_{H}^{2}+ \norm{\vn (t)}{}{2} .
  \tag{D.6} \label{E:app_2D_f_vn_est}
\end{equation}
By (\ref{E:wirowosc_b}) and (\ref{E:antisymmetry_b}), we obtain
\begin{eqnarray*}
 \dual{B(\vn (t) + z(t))}{\vn (t)}{} = \dual{B(\vn (t) + z(t), z(t))}{\vn (t)}{}
 = - \dual{B(\vn (t) + z(t), \vn (t))}{z(t)}{}.
\end{eqnarray*}
We claim that for every $\eps > 0$ there exists a constant ${C}_{\eps }>0$ such that
\begin{equation} \label{E:2D_b_L^4}
 \bigl| \dual{B(\vn (t) + z(t), \vn (t))}{z(t)}{} \bigr| \le \eps \norm{\vn (t)}{}{2} +
  {C}_{\eps } (|\vn (t){|}_{H}^{2} +1 ) \norm{z(t)}{{L}^{4}}{4}.  \tag{D.7}
\end{equation}
Indeed, 
we have
\begin{equation}
\begin{array}{llll}
& \bigl| \dual{B(\vn (t) + z(t), \vn (t))}{z(t)}{} \bigr|  & \le &
\norm{\vn (t)+z(t)}{{L}^{4}}{}  \norm{z(t)}{{L}^{4}}{} \norm{\vn (t)}{}{} \nonumber \\
& & \le & \frac{\eps }{2} \norm{\vn (t)}{}{2}
+ {\tilde{C}}_{\eps } \norm{\vn (t)+z(t)}{{L}^{4}}{2}  \norm{z(t)}{{L}^{4}}{2} \tag{D.8}\label{E:2D_b_L^4_1}
\end{array}
\end{equation}
for some constant ${\tilde{C}}_{\eps }>0$. Moreover, by (\ref{E:2D_norm_L^4_est}) we obtain
\begin{equation}
\begin{array}{llll}
{\tilde{C}}_{\eps }  \norm{\vn (t)+z(t)}{{L}^{4}}{2}  \norm{z(t)}{{L}^{4}}{2}
& & \le &  2  {\tilde{C}}_{\eps } \bigl(  \norm{\vn (t)}{{L}^{4}}{2}  +  \norm{ z(t)}{{L}^{4}}{2}  \bigr)
   \norm{z(t)}{{L}^{4}}{2} \nonumber \\
 & & \le & 2 \sqrt{2}{\tilde{C}}_{\eps }  |\vn (t){|}_{H} \norm{\vn (t)}{}{} \cdot \norm{z(t)}{{L}^{4}}{2}
    + 2 {\tilde{C}}_{\eps } \norm{ z(t)}{{L}^{4}}{4} \nonumber \\
 & & \le & \frac{\eps }{2}  \norm{\vn (t)}{}{2} +  {\tilde{\tilde{C}}}_{\eps } |\vn (t){|}_{H}^{2}
  \norm{z(t)}{{L}^{4}}{4}
    + 2 {\tilde{C}}_{\eps } \norm{ z(t)}{{L}^{4}}{4} . \tag{D.9} \label{E:2D_b_L^4_2}
\end{array}
\end{equation}
By (\ref{E:2D_b_L^4_1}) and (\ref{E:2D_b_L^4_2})  the proof of inequality (\ref{E:2D_b_L^4}) is complete.

\bigskip  \noindent
Using inequalities (\ref{E:app_2D_z_vn_est}), (\ref{E:app_2D_f_vn_est}) and inequality
(\ref{E:2D_b_L^4}) with $\eps := \frac{1}{2}$ in (\ref{E:app_2D_derivative}) we infer that
\begin{equation} \label{E:app_2D_derivative_est}
  \frac{d}{dt} |\vn (t){|}_{H}^{2} + \frac{1}{2} \norm{\vn (t)}{}{2}
\le a(t) + \theta (t) |\vn (t){|}_{H}^{2} , 
\tag{D.10}
\end{equation}
where $a(t) =|z(t){|}_{H}^{2} + |f(t){|}_{V^\prime }^{2} + {C}_{\frac{1}{2} }\norm{z(t)}{{L}^{4}}{4}$
and $\theta (t) =2+{C}_{\frac{1}{2} }\norm{z(t)}{{L}^{4}}{4}$.
Since $z \in {L}^{2}(0,T,V) \cap \ccal ([0,T];H)$, by inequality (\ref{E:2D_norm_L^4_est}) we have
$ \norm{z(t)}{{L}^{4}}{4} \le 2 {|z(t)|}_{H}^{2} \norm{z(t)}{}{2}$, $t \in [0,T]$,
and hence
\begin{eqnarray*}
  \int_{0}^{T}\norm{z(t)}{{L}^{4}}{4} \, dt \le 2 \sup_{t\in [0,T]} {|z(t)|}_{H}^{2}
   \int_{0}^{T} \norm{z(t)}{}{2}  \, dt < \infty .
\end{eqnarray*}
Notice that the functions $a$ and $ \theta $ are nonnegative and integrable.

\bigskip  \noindent
By (\ref{E:app_2D_derivative_est}), we obtain 
\begin{equation} \label{E:app_2D_derivative_est'}
   |\vn (t){|}_{H}^{2} + \frac{1}{2}\int_{0}^{t} \norm{\vn (s)}{}{2} \, ds
\le \int_{0}^{t}a(s) \, ds  + \int_{0}^{t} \theta (s)|\vn (s){|}_{H}^{2} \, ds. \tag{D.11}
\end{equation}
In particular, 
\begin{eqnarray*}
& &   |\vn (t){|}_{H}^{2}
\le \int_{0}^{t}a(s) \, ds  + \int_{0}^{t} \theta (s)|\vn (s){|}_{H}^{2} \, ds
\end{eqnarray*}
and hence by the Gronwall Lemma, see \cite[page 18]{Temam95}, for all $t\in dom(\vn )$
$$
   |\vn (t){|}_{H}^{2} \le |\vn (0){|}_{H}^{2}
   \exp \Bigl( \int_{0}^{t} \theta (s) \, ds  \Bigr)
  + \int_{0}^{t} a(s) \exp \Bigl( \int_{s}^{t} \theta (r) \, dr  \Bigr)\, ds  \nonumber
$$
\begin{equation}
  \le |{u}_{0}{|}_{H}^{2}
   \exp \Bigl( \int_{0}^{T} \theta (s) \, ds  \Bigr)
  + \exp \Bigl( \int_{0}^{T} \theta (r) \, dr  \Bigr) \int_{0}^{T} a(s) \, ds =:{K}_{1}.
 \tag{D.12} \label{E:app_2D_derivative_est''}
\end{equation}
Since inequality (\ref{E:app_2D_derivative_est''}) holds for all $t \in dom (\vn )$, we infer that
\begin{equation} \label{E:app_vn_est_L^infty_dom_vn}
   \sup_{t \in dom (\vn )} |\vn (t){|}_{H}^{2}  \le {K}_{1} . \tag{D.13}
\end{equation}
By (\ref{E:app_2D_derivative_est'}) and (\ref{E:app_vn_est_L^infty_dom_vn}) we have  also the following inequality
\begin{equation*} \label{E:app_vn_est_L^2_dom_vn}
   \int_{dom (\vn )} \norm{\vn (s)}{}{2} \, ds \le {K}_{2}
\end{equation*}
for some constant ${K}_{2}>0$.
Hence $dom (\vn ) = [0,T]$, i.e. the Galerkin solutions are defined on the whole interval $[0,T]$ and satisfy the following inequalities
\begin{equation} \label{E:app_vn_est_L^infty}
  \sup_{n\in \nat } \sup_{t \in [0,T]} |\vn (t){|}_{H}^{2}  \le {K}_{1} . \tag{D.14}
\end{equation}
and
\begin{equation} \label{E:app_vn_est_L^2}
  \sup_{n\in \nat } \int_{0}^{T} \norm{\vn (s)}{}{2} \, ds \le {K}_{2} \tag{D.15}
\end{equation}

\bigskip  \noindent
\it Step 2. \rm
Let us consider the sequence $(\vn )$ of  the Galerkin solutions.
Using Lemma  \ref{L:comp},
we will show that $(\vn )$ is relatively compact in the space $\zcal $ defined by
(\ref{E:Z}),
i.e.
\begin{equation*}
  \zcal =\ccal ([0,T];U^{\prime })\cap {L}_{w}^{2}(0,T;V)\cap {L}^{2}(0,T;{H}_{loc}) \cap \ccal ([0,T];{H}_{w}) .
\end{equation*}
Indeed,
since $(\vn )$ satisfies inequalities (\ref{E:app_vn_est_L^infty}) and (\ref{E:app_vn_est_L^2}),
it is sufficient to check that $(\vn )$ satisfies condition (c) in
Lemma \ref{L:comp}.

\bigskip  \noindent
By (\ref{E:2D_NS_shift_Galerkin}) and Lemma \ref{L:A_acal_rel} (a) we have
for all $s,t \in [0,T] $ such that $s \le t $
$$
 \vn (t) - \vn (s) = - \int_{s}^{t} \Pn \acal \vn (r) \,dr + \int_{s}^{t} \Pn z(r) \, dr
$$
 \begin{equation}
 \qquad  - \int_{s}^{t} \Pn B(\vn (r) + z(r)) \, dr + \int_{s}^{t} \Pn f(r) \, dr .
  \tag{D.16} \label{E:app_mod_cont}
\end{equation}
Let us recall that we have the following continuous embeddings
$$
U \subset {V}_{\gamma } \subset V \subset H \cong H^\prime \subset U^\prime ,
$$
where $\gamma  > \frac{d}{2} +1$.
Let us fix $\delta >0 $ and assume that $|t-s|\le \delta $.
We will estimate  each term on the right-hand side of equality (\ref{E:app_mod_cont}).

\bigskip  \noindent
By continuity of the embedding $U \subset V$, (\ref{E:Acal_V'_norm}), the H\"{o}lder inequality,  Lemma \ref{L:P_n|U} (c) and (\ref{E:app_vn_est_L^2})  we have
\begin{equation} \label{E:app_mod_cont_est_1}
 \Bigl| \int_{s}^{t} \Pn \acal \vn (r) \,dr {\Bigr| }_{U^\prime }
 \le c{\delta }^{\frac{1}{2}} \Bigl( \int_{s}^{t} \norm{\vn (r)}{}{2} \, dr {\Bigr) }^{\frac{1}{2}}
 \le {c}_{1} {\delta }^{\frac{1}{2}}.  \tag{D.17}
\end{equation}
Since the embedding $U \subset H$ is continuous and the restriction of $\Pn $ to $U$ is an orthogonal projection and  $z \in \ccal ([0,T];H)$, we have
\begin{equation} \label{E:app_mod_cont_est_2}
 \Bigl| \int_{s}^{t} \Pn  z(r) \,dr {\Bigr| }_{U^\prime } \le c \sup_{r \in [0,T]} |z(r){|}_{H} \cdot \delta
  = {c}_{2} \delta  \tag{D.18}
\end{equation}
By the continuity of the embedding $U \subset {V}_{\gamma }$, Lemma  \ref{L:P_n|U} (c) and  inequalities
 (\ref{E:estimate_B_ext}) and (\ref{E:app_vn_est_L^infty}) we obtain
\begin{equation} \label{E:app_mod_cont_est_3}
 \Bigl| \int_{s}^{t} \Pn  B(\vn (r) + z(r)) \, dr {\Bigr| }_{U^\prime}
 \le C \int_{s}^{t}  |\vn (r) + z(r){|}_{H}^{2} \, dr
 \le C \delta \sup_{r \in [0,T]} |\vn (r) + z(r){|}_{H}^{2}
 \le {c}_{3} \delta .   \tag{D.19}
\end{equation}
Since the embedding $U \subset V$ is continuous and $f \in {L}^{2}(0,T;V^\prime )$,
by the H\"{o}lder inequality and  Lemma \ref{L:P_n|U} (c) we obtain
\begin{equation} \label{E:app_mod_cont_est_4}
 \Bigl| \int_{s}^{t} \Pn  f(r) \,dr {\Bigr| }_{U^\prime }
 \le c{\delta }^{\frac{1}{2}} \Bigl( \int_{s}^{t} |f(r){|}_{V^\prime }^{2} \, dr {\Bigr) }^{\frac{1}{2}}
 \le {c}_{4} {\delta }^{\frac{1}{2}}.  \tag{D.20}
\end{equation}
By (\ref{E:app_mod_cont}) and inequalities (\ref{E:app_mod_cont_est_1})-(\ref{E:app_mod_cont_est_4})
we infer that
$$
  \lim_{\delta \to 0} \sup_{n \in \nat }  \sup_{\overset{s,t \in[0,T]}{|t-s|\le \delta }}
 {|\vn (t)-\vn (s)|}_{U^{\prime }} =0 .
$$
Hence by Lemma \ref{L:comp}
the sequence $(\vn )$ contains a subsequence, still denoted by $(\vn )$,
convergent in the space $\zcal $ to some $v \in \zcal $. In particular,
$v \in {L}^{2}(0,T;V) \cap \ccal ([0,T];{H}_{w})$.

\bigskip \noindent
\it Step 3. \rm  We will prove that $v$ satisfies equality (\ref{E:appendix_2D_NS_shift_identity}).
By (\ref{E:2D_NS_shift_Galerkin}), Lemma \ref{L:A_acal_rel} (a) and equalities (\ref{E:Acal_ilsk_Dir})
and (\ref{E:tP_n-P_n}), we infer that for all  $\phi \in U $
$$
  \ilsk{\vn (t)}{\phi }{H} = \ilsk{{u}_{0}}{\Pn \phi }{H}
  - \int_{0}^{t} \dirilsk{\vn (s)}{\Pn \phi }{} \, ds
   + \int_{0}^{t} \ilsk{z(s)}{\Pn \phi }{H} \, ds
$$
\begin{equation}
  \qquad \qquad  - \int_{0}^{t} \dual{ B (\vn (s)+z(s)) }{\Pn \phi }{} \, ds
   +  \int_{0}^{t} \dual{ f(s) }{\Pn \phi }{} \, ds . \tag{D.21}  \label{E:2D_NS_shift_Galerkin_int}
\end{equation}
 We will pass to the limit as $n \to \infty $ in equality (\ref{E:2D_NS_shift_Galerkin_int}).
Since $\vn \to v$ in the space $\zcal $, by Lemmas \ref{L:P_n|U} (c)  and B.2  we find that for all  $t \in [0,T]$ and $\phi \in U$
$$
  \ilsk{v (t)}{\phi }{H} = \ilsk{{u}_{0}}{ \phi }{H}
  - \int_{0}^{t} \dirilsk{ v (s)}{\phi }{} \, ds
  + \int_{0}^{t} \ilsk{z(s)}{ \phi }{H} \, ds
$$
\begin{equation}
 \qquad \qquad  - \int_{0}^{t} \dual{ B (v (s)+z(s)) }{ \phi }{} \, ds
   +  \int_{0}^{t} \dual{ f(s) }{ \phi }{} \, ds . \tag{D.22} \label{E:2D_NS_int}
 \end{equation}
Since $U$ is dense in the space $V$, equality holds for all $\phi \in V$, as well.
Hence by (\ref{E:V_il_sk}),
$v$ satisfies equality (\ref{E:appendix_2D_NS_shift_identity}), i.e. $v$ is a weak solution of problem (\ref{E:appendix_2D_NS_shift}).

\bigskip  \noindent
\it Step 4. \rm  (Uniqueness)
Let ${v}_{1},{v}_{2}$ be two weak solutions of problem (\ref{E:appendix_2D_NS_shift}).
Let $w:= {v}_{1}-{v}_{2}$. Since
$$
  B({v}_{1}+z) - B({v}_{2}+z) = B({v}_{1}+z,w) + B(w,{v}_{2}+z) ,
$$
$w$ satisfies the following equation
\begin{equation*}
\begin{cases}
  & \frac{dw}{dt} = - \acal w - B({v}_{1}+z,w) - B(w,{v}_{2}+z) , \\
  & w(0) =0 .
\end{cases}
\end{equation*}
By Lemma \ref{L:III.3.4_Temam'79}, we infer that $w^{\prime } \in {L}^{2}(0,T;V^{\prime })$ and hence
$$
   \frac{1}{2} \frac{d}{dt} |w(t){|}_{H}^{2} = - \norm{w(t)}{}{2} - \dual{B(w,{v}_{2}+z)}{w}{} .
$$
Moreover, by (\ref{E:III.3.53})
\begin{eqnarray*}
  \bigl| - \dual{B(w,{v}_{2}+z)}{w}{} \bigr| = \bigl|  \dual{B(w)}{{v}_{2}+z}{} \bigr|
  \le \sqrt{2} |w{|}_{H} \norm{w}{}{} \norm{{v}_{2}+z}{}{}
   \le \frac{1}{2} \norm{w}{}{2} +  |w{|}_{H}^{2} \norm{{v}_{2}+z}{}{2} .
\end{eqnarray*}
Thus
$$
  \frac{d}{dt} |w(t){|}_{H}^{2} + \norm{w(t)}{}{2} \le 2 |w(t){|}_{H}^{2} \norm{{v(t)}_{2}+z(t)}{}{2}
$$
and, in particular,
$$
    |w(t){|}_{H}^{2} \le |w(0){|}_{H}^{2} +2 \int_{0}^{t} |w(s){|}_{H}^{2} \norm{{v(s)}_{2}+z(s)}{}{2} \, ds , \qquad t \in [0,T].
$$
Since $w(0)=0$, by the Gronwall Lemma we infer that $w=0$, i.e. ${v}_{1}={v}_{2}$.

\bigskip  \noindent
Let us move to the proof of (b).
Let us write equality (\ref{E:2D_NS_int}) in the following form
$$
  \frac{dv}{dt} = - \acal v +z - B(v+z) +f .
$$
Since $v\in {L}^{2}(0,T;V)$, by Lemma III.1.2 in \cite{Temam79} it is sufficient to show that $v^{\prime } \in {L}^{2}(0,T;V^{\prime })$.
The most difficulty appears in the nonlinear term. However, since $v,z \in {L}^{2}(0,T;V) \cap {L}^{\infty }(0,T;H)$, Lemma \ref{L:III.3.4_Temam'79} yields that $B(v+z) \in {L}^{2}(0,T;V^{\prime })$.
The proof of the theorem is thus complete.
\end{proof}

\bibliographystyle{model1a-num-names}
\bibliography{<your-bib-database>}

\end{document}